\newcommand{\showcomments}{no}
\newsavebox{\commentbox}
\newcounter{ax}
\newtheorem{thm}{Theorem}[section]
\newtheorem{lem}[thm]{Lemma}
\newtheorem{prop}[thm]{Proposition}
\newtheorem{thmi}{Theorem}
\theoremstyle{definition}
\newtheorem{defn}[thm]{Definition}
\newtheorem{rem}[thm]{Remark}
\newtheorem{exmp}[thm]{Example}
\newtheorem{notation}[thm]{Notation}
\newtheorem{claim}{Claim}
\newtheorem*{outline}{Outline of the proof}
\newtheorem{claim*}{Claim}
\newtheorem*{rem*}{Remark}
\DeclareMathOperator{\dimension}{dim}
\DeclareMathOperator{\kernel}{ker}
\DeclareMathOperator{\link}{Lk}
\DeclareMathOperator{\stabilizer}{Stab}
\DeclareMathOperator{\diam}{diam}
\newcommand{\neb}{\mathcal N}
\newcommand{\field}[1]{\mathbb{#1}}
\newcommand{\integers}{\ensuremath{\field{Z}}}
\newcommand{\naturals}{\ensuremath{\field{N}}}
\newcommand{\reals}{\ensuremath{\field{R}}}
\newcommand{\boundary}{{\ensuremath \partial}}
\newcommand{\Rmnum}[1]{\mathbf{{\expandafter\@slowromancap\romannumeral #1@}}}
\newcommand{\contact}[1]{\ensuremath{\mathcal C#1}}
\let\oldmarginpar\marginpar
\renewcommand\marginpar[1]{\-\oldmarginpar[\raggedleft\footnotesize #1]%
{\raggedright\footnotesize #1}}
\newcounter{enumitemp}
\newcommand{\dist}{\textup{\textsf{d}}}
\newcommand{\gate}{\mathfrak g}
\newcommand{\cay}{\mathrm{Cay}}
\newcommand{\hyp}{\mathcal H}
\begin{document}

\title{Acylindrical hyperbolicity of cubical small cancellation groups}

\author[G.N. Arzhantseva]{Goulnara N. Arzhantseva}
\address{University of Vienna, Faculty of Mathematics, Oskar-Morgenstern-Platz 1, 1090 Wien, Austria}
\email{goulnara.arzhantseva@univie.ac.at}
\thanks{G.N. Arzhantseva was partially supported by her ERC grant ANALYTIC no. 259527}

\author[M.F. Hagen]{Mark F. Hagen}
\address{Department of Pure Maths. and Math. Stats., University of Cambridge, Cambridge, UK}
\curraddr{School of Mathematics, University of Bristol, Bristol, UK}
\email{markfhagen@gmail.com}
\thanks{M.F. Hagen was initially supported by the EPSRC grant of Henry Wilton}

\keywords{Cubical small cancellation, acylindrically hyperbolic group}
\subjclass[2010]{20F06, 20F65, 20F67}

\maketitle

\begin{abstract}
We provide an analogue of Strebel's classification of geodesic triangles in classical $C'(\frac16)$ groups for groups given 
by Wise's cubical presentations satisfying sufficiently strong metric cubical small cancellation conditions.  Using our 
classification, we give conditions guaranteeing that a cubical small cancellation group is acylindrically hyperbolic.
\end{abstract}

\tableofcontents

\section{Introduction}\label{sec:intro}
A cubical presentation of a group is a high-dimensional generalization of both a ``classical'' and a ``graphical'' 
presentation of a group in terms of generators and relators. Cubical presentations, and the cubical 
small cancellation theory developed by Wise~\cite[Section 3]{Wise:QCH}, plays a significant role in geometric 
group theory, following spectacular solutions of the virtual Haken conjecture by Agol and of Baumslag's conjecture on 
one-relator groups with torsion by Wise.

A \emph{classical presentation} of a group $G$ 
consists of a wedge $X$ of circles and a collection of combinatorial immersions $Y_i\to X$ of circles so that the 
\emph{presentation complex} $X^*$ formed from $X$ by coning off the various $Y_i$ satisfies $\pi_1X^*\cong G$.  The 
$1$--skeleton $\cay(X^*)$ of the universal cover $\widetilde X^*$ of $X^*$ is a Cayley graph of $G$ with respect to the 
generating set implicit in the choice of $X$.  A \emph{graphical presentation} is a natural generalization of this:  $X$ is 
allowed to be an arbitrary graph, and each $Y_i\to X$ becomes an immersion of graphs.  

In~\cite{Wise:QCH}, it is observed that allowing even more flexibility in the choice of $X$ leads to more tractable 
``presentations''.  This leads to the notion of a \emph{cubical presentation} (see Section~\ref{sec:cubical_pres}): $X$ is now a connected nonpositively 
curved cube complex and each $Y_i$ is a connected nonpositively curved cube complex equipped with a local isometry $Y_i\to 
X$.  The presentation complex $X^*$ is defined analogously, and there is a \emph{generalized Cayley graph} $\cay(X^*)$ which 
is the cubical part of the universal cover of $X^*$.  The analogy with classical presentations is: the cube complex 
$X$ is a kind of ``high-dimensional generating set'', the CAT(0) cube complex $\widetilde X$ is the ``high-dimensional tree'' 
taking the place of the free group on the generating set in the classical case, and $\cay(X^*)$ corresponds to a Cayley 
graph.  

One can then impose cubical small cancellation conditions, in which ``generalized overlaps'' between the various 
$Y_i$ (i.e. shadows of $Y_i$ on $Y_j$, as propagated through the intervening cubes) are small in the appropriate metric 
sense.  In this setting, there are powerful tools -- specifically, the \emph{ladder theorem} and the \emph{cubical 
Greendlinger lemma/diagram trichotomy} (see Section~\ref{sec:cubical_pres}) -- that allow one to extract 
information about a group from a small cancellation cubical presentation.  The small cancellation conditions of interest in 
this paper are the \emph{cubical $C'(\alpha)$ conditions}, for $\alpha>0$.  These say that $|P|<\alpha\|Y_i\|$ for all 
$P,i$, where $\|Y_i\|$ denotes the length of a shortest essential (i.e. not homotopic to a constant map) closed combinatorial path in $Y_i$ and $|P|$ is the length 
of the geodesic \emph{piece} $P$ in $Y_i$.  A piece is, roughly, a path in the ``generalized overlap'' between distinct elevations to 
$\widetilde X$ of the various $Y_i$, or between such elevations and carriers of hyperplanes in $\widetilde X$ that do not 
cross $\widetilde Y_i$.  We also use the stronger \emph{uniform cubical $C''(\alpha)$ condition}, which asks that any geodesic piece $P$ in \emph{any} $Y_i$ is shorter than $\alpha\|Y_j\|$, for any 
relator $Y_j$.  See Definition~\ref{defn:metric_small_c}.

Many groups that do not admit classical presentations satisfying 
strong small cancellation conditions nonetheless admit cubical presentations with these properties.  For example, if $G$ is 
the fundamental group of a nonpositively curved cube complex $X$, then $G$ admits a cubical presentation with no relators, 
and therefore satisfies \emph{arbitrarily} strong cubical small cancellation conditions; on the other hand, $G$ does not in 
general satisfy strong classical small cancellation conditions, as can be seen by considering, for instance, right-angled 
Artin groups.  Later in this introduction, we list more examples of cubical small cancellation groups.

\subsection*{Classifying triangles}  Our first result is geometric. We classify geodesic triangles in $\cay(X^*)$ in terms 
of the disc diagrams that they bound in $\widetilde X^*$.  This 
is a cubical analogue of Strebel's classification of geodesic triangles in $C'(\frac16)$ groups (Theorem~43 
of~\cite{Strebel}), which says that any geodesic triangle bounds a disc diagram of one of a small number of specific 
combinatorial types:

\begin{thmi}[Classification of triangles]\label{thmi:classification_of_triangles}
There exists $\alpha>0$ so that the following holds.  Let $X$ be a connected nonpositively curved cube complex, let $\mathcal I$ be a 
(possibly infinite) index set and let $\{Y_i\to X\}_{i\in\mathcal I}$ be a set of local isometries of connected nonpositively curved cube complexes.  Suppose that the 
cubical presentation $\langle X\mid\{Y_i\}_{i\in\mathcal I}\rangle$ satisfies the cubical $C'(\alpha)$ condition. Let $X^*$ 
be the presentation complex and  $x,y,z$ be $0$--cells of the universal cover $\widetilde X^*$.  Then there exists a 
geodesic triangle $\Delta$ in $\widetilde X^*$, with corners $x,y,z$, so that $\Delta$ is the boundary path of a disc 
diagram $D\to X^*$ of one of $9$ types; in particular, $D$ is the union of $3$ \emph{padded ladders}.  Moreover, any other 
geodesic triangle with corners $x,y,z$ is square-homotopic to such a $\Delta$. 
\end{thmi}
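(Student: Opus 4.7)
The plan is to adapt Strebel's classification of geodesic triangles in classical $C'(\frac{1}{6})$ groups to the cubical setting, with Wise's ladder theorem and cubical Greendlinger lemma taking the place of their classical counterparts. Given $x,y,z\in \widetilde X^*$, first choose any three geodesics forming a triangle $\Delta$ with corners $x,y,z$ and lift its boundary loop to a minimal-area reduced disc diagram $D\to X^*$; the existence of such a $D$ is standard in cubical disc diagram theory~\cite{Wise:QCH}.

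Next, I would analyze the combinatorial structure of $D$. Its boundary partitions into three geodesic arcs meeting at $x,y,z$. By the cubical Greendlinger lemma, every cone-cell of $D$ meeting $\partial D$ does so along more than a definite fraction of its boundary. Since $\partial D$ consists of three geodesic arcs, this forces the boundary arc of each such cone on $\partial D$ to lie in at most two consecutive sides of the triangle: a cone-cell whose boundary arc spanned all three sides would contain a geodesic piece of length exceeding $\alpha\|Y_i\|$, contradicting $C'(\alpha)$. The ladder theorem applied to the appropriate sub-diagrams then shows that the cone-cells and the squares linking them organize into at most three padded ladders, one emanating from each corner and running along the two incident sides.

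The nine types should arise from a case analysis of how these three padded ladders fit together inside $D$: whether two ladders share a terminal cone-cell, meet along a strip of squares, are coalesced through a single central cone-cell meeting all three sides, or remain entirely disjoint, together with the degenerate cases in which one or more of the three ladders collapses. The $C'(\alpha)$ condition, for $\alpha$ small enough, rules out all other configurations. For the square-homotopy uniqueness, given any other geodesic triangle $\Delta'$ with the same corners, the loop $\Delta\cup \Delta'$ bounds a minimal-area reduced disc diagram whose boundary is a union of six geodesic arcs; the same Greendlinger--ladder analysis, now with six sides, shows that this diagram contains no cone-cells and consists entirely of squares, yielding the desired square-homotopy.

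The main obstacle is the case analysis combined with the calibration of $\alpha$. One must pick $\alpha$ small enough to simultaneously enforce three constraints: every cone-cell in $D$ is a shell with a large boundary arc on $\partial D$; no such arc spans all three sides or crosses a corner deeply; and the generalized overlaps between distinct cones in neighbouring ladders (and between cones and hyperplane carriers) are short enough to be absorbed into the padded-ladder structure. Tracking all resulting configurations and matching them to the analogues of the classical Strebel types is the technical heart of the argument, essentially a careful bookkeeping exercise that must also account for the possibility of ``padding squares'' inserted between consecutive cones by the ambient cube complex structure of $X$.
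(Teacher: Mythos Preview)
Your outline has genuine gaps. First, your claim that ``a cone-cell whose boundary arc spanned all three sides would contain a geodesic piece of length exceeding $\alpha\|Y_i\|$'' is wrong: a cone-cell meeting all three sides produces no long piece, and indeed such a \emph{median-cell} is exactly what appears in four of the nine types (the ``generic'' cases). The classification is not that cone-cells touch at most two sides; it is that there is at most one median-cell and every other cone-cell touches exactly two sides. Second, your uniqueness argument fails: two geodesics in $\cay(X^*)$ with the same endpoints need not bound a square-only diagram, so the hexagon $\Delta\cup\Delta'$ can certainly contain cone-cells. The correct approach is to minimise complexity not over diagrams with fixed boundary $\alpha\beta\gamma$, but over diagrams whose boundary is $\alpha'\beta'\gamma'$ with each side varying in the square-homotopy class of the given one; the square-homotopy assertion is then immediate from the construction, not from an after-the-fact comparison.

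The central missing ingredient is a curvature computation. The Greendlinger lemma locates at least three positively-curved features on $\partial D$, but says nothing about \emph{internal} cone-cells or cone-cells whose outer path lies on a single side (``shortly-external'' cells). These are eliminated by applying combinatorial Gauss--Bonnet with the split-angling: under $C'(\tfrac{1}{144})$ each internal or shortly-external cone-cell contributes curvature below $-4\pi$, while the three corner features contribute at most $6\pi$ total, so no such cells can occur. Only after this does one know that every cone-cell meets at least two sides, and the padded-ladder decomposition then follows by separating off the (unique, if present) median-cell and applying the ladder theorem to each of the three resulting subdiagrams. Your sketch invokes the ladder theorem on ``appropriate sub-diagrams'' but never establishes the structural facts (no internal cells, connected intersection with each side) that make those sub-diagrams ladders.
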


\begin{figure}[h]
\includegraphics[width=\textwidth]{objeto_volador_no_identificado.pdf}
\caption{The enumerated cases from Theorem~\ref{thm:strebel_cubical_small_can} are shown, clockwise from the top left:
cases \eqref{item:3g},\eqref{item:2g},\eqref{item:1g},\eqref{item:0g},\eqref{item:0t},\eqref{item:1t},\eqref{item:2t},\eqref{item:3t}.  Any of the spurs may instead be exposed squares and vice versa.  The ladder 
case is not shown.  In each case, there is a ``central'' cell --- a $2$--cell or $0$--cell --- 
such that the diagram is formed from three padded ladders, each containing the central cell.}\label{fig:classification}
\end{figure}

The precise statement is Theorem~\ref{thm:strebel_cubical_small_can}, which explains what the ``$9$ types'' 
of disc diagram are (the nondegenerate ones are shown in Figure~\ref{fig:classification}); a \emph{padded ladder} is a disc 
diagram of the type in Figure~\ref{fig:better_ladder}; see 
Definition~\ref{defn:ladder}. 

\begin{figure}[h]
\begin{overpic}[width=0.9\textwidth]{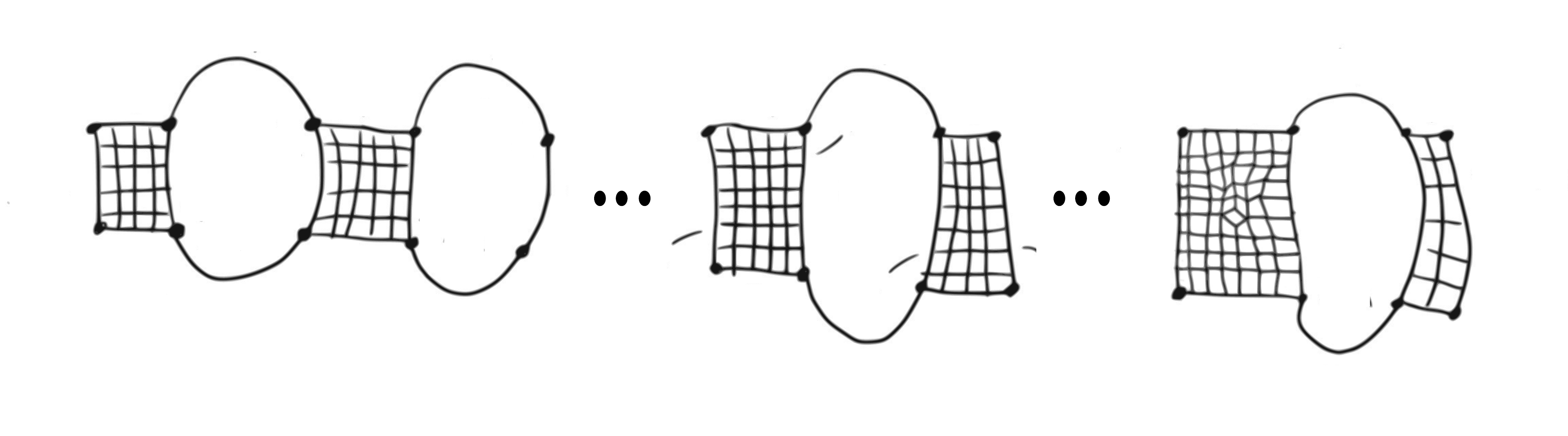}
 \put(15,25){$\alpha_1$}
 \put(30,24){$\alpha_2$}
 \put(55,24){$\alpha_i$}
 \put(85,23){$\alpha_n$}
 \put(14,8){$\gamma_1$}
 \put(29,7){$\gamma_2$}
 \put(54,4){$\gamma_i$}
 \put(84,3){$\gamma_n$}
 \put(47,21){$\rho_{i-1}$}
 \put(61,21){$\rho_i$}
 \put(47,7){$\vartheta_{i-1}$}
 \put(61,6){$\vartheta_i$}
 \put(67,11){$\mu_{i+1}$}
 \put(39,11){$\nu_{i-1}$}
 \put(54,20){$\mu_i$}
 \put(55,9){$\nu_i$}
 \put(55,15){$C_i$}
 \put(15,15){$C_1$}
 \put(85,15){$C_n$}
 \put(30,15){$C_2$}
 \put(3,15){$\nu_0$}
 \put(94,14){$\mu_{n+1}$}
\end{overpic}

\caption{A padded ladder, see Definition~\ref{defn:ladder}.}\label{fig:better_ladder}
\end{figure}

The $\alpha$ required in our proof is $\frac{1}{144}$.  Conceptually, our theorem says that any 
geodesic triangle bounds a disc diagram which is square-homotopic (fixing corners) to a disc diagram which is a ``thickened 
tripod''.  Theorem~\ref{thmi:classification_of_triangles} reduces to existing results in when $X$ or $\mathcal 
I$ are restricted. 

\begin{itemize}
\item If $\mathcal I=\emptyset$, then Theorem~\ref{thmi:classification_of_triangles} says that any three vertices in a 
CAT(0) cube complex determine a geodesic tripod, which is a consequence of the fact that CAT(0) cube complexes are simply 
connected cube complexes whose $1$--skeleta are median graphs~\cite{Chepoi:median}.

\item  If $X$ is a wedge of circles and each $Y_i$ is an immersed circle, then  $\langle X\mid\{Y_i\}_{i\in\mathcal I}\rangle$ is a classical 
$C'(\frac{1}{144})$ presentation, and the original Strebel classification for classical $C'(\frac16)$ groups applies, and 
follows from Theorem~\ref{thmi:classification_of_triangles}.

\item If X is a wedge of circles and each $Y_i$ is a graph with $Y_i\to X$ an immersion of graphs, then we have instances of graphical 
presentations.  In this setting, there is a classification of 
triangles that holds under weaker small cancellation conditions than are required in the cubical setting.  Indeed, the 
classification of triangles is completely combinatorial, and Strebel's proof actually applies in the setting of the 
\emph{$(3,7)$--diagrams} used by Gruber-Sisto in their proof of acylindrical hyperbolicity for graphical small cancellation 
groups~\cite{GruberSisto:graphical}; this combinatorial observation was made by Gruber~\cite[Remark~3.11]{Gruber:thesis}.  
While the result about $(3,7)$--diagrams suffices for graphical small cancellation groups, one cannot extend it directly to 
disk diagrams over cubical presentations since the presence of squares means that such diagrams need not satisfy the $(3,7)$ condition.
Moreover, Theorem~\ref{thmi:classification_of_triangles} includes the result on graphical presentations in their full generality (where $X$ is an arbitrary graph), under our small cancellation assumption.
\end{itemize}

One can construct examples of small cancellation cubical presentations covered by 
Theorem~\ref{thmi:classification_of_triangles} but not by the classical or graphical small cancellation conditions, and one 
cannot deduce Theorem~\ref{thmi:classification_of_triangles} from the corresponding purely cubical or graphical results.  
Some explicit examples of cubical small cancellation groups to which the theorem applies are discussed below.  

\subsection*{Applications of the classification}  One can imagine applications of 
Theorem~\ref{thmi:classification_of_triangles} to the thorough investigation of cubical small cancellation groups analogous 
to applications of Strebel's classification of triangles in classical small cancellation theory, e.g. conformal dimension of 
the boundary~\cite{Mackay:cd}, growth tightness \cite{Sambusetti:surf}, the rapid decay property~\cite{AD}, etc.

In this paper, we focus on acylindrical hyperbolicity, inspired by Gruber-Sisto's result for graphical small cancellation 
groups~\cite{GruberSisto:graphicalJ}.  A group $G$ is \emph{acylindrically hyperbolic} if it admits a nonelementary 
acylindrical action on a hyperbolic space (acylindricity generalizes \emph{uniform} properness).  Acylindrically hyperbolic 
groups were defined by Osin~\cite{Osin:acyl}, and the notion of an acylindrical action goes back at least to the work of 
Bowditch~\cite{Bowditch:tight}.  The 
notion of acylindrical hyperbolicity  unifies several generalizations of relative 
hyperbolicity~\cite{BestvinaFujiwara,DGO,Hamenstadt:cohom,Sisto:contracting} and provides a class of groups with many strong 
properties: if $G$ is acylindrically hyperbolic, then $G$ is SQ-universal, contains normal free subgroups, and is 
$C^*$--simple if and only if it has no finite normal subgroup~\cite{DGO}; $G$ contains Morse elements and thus all 
asymptotic cones of $G$ contain cut-points~\cite{Sisto:qc}; the bounded cohomology of $G$ has infinite dimension in 
dimensions $2$~\cite{HullOsin} and $3$~\cite{FPS}; every commensurating endomorphism of $G$ is an inner 
automorphism~\cite{AMS}, etc.  

The class of acylindrically hyperbolic groups is now known to be vast, see 
e.g.~\cite{Bowditch:tight,Osin:acyl,DGO,MinasyanOsin,Osin:l2,BestvinaFeighn,GruberSisto:graphicalJ,BHS:HHS_I,PrzytyckiSisto}. 
Our second result adds Wise's cubical small cancellation groups to this notable list.  To state it, we need some preliminary 
discussion.
We refer to Section~\ref{sec:cubical_pres} for the exact definition of a 
cubical presentation, and to Definition~\ref{defn:metric_small_c} for the cubical small cancellation conditions we mention 
shortly.  

\begin{defn}[Minimal cubical presentation]\label{defn:minimal}
We say that $\langle X\mid\{Y_i\}_{i\in\mathcal I}\rangle$ is a \emph{minimal cubical presentation} if for all $i\in\mathcal 
I$, the following holds.  Let $\widetilde Y_i\to Y_i$ be the universal cover, and let $\widetilde 
Y_i\hookrightarrow\widetilde X$ be a lift of $\widetilde Y_i\to Y_i\to X$.  Then $\stabilizer_{\pi_1X}(\widetilde Y_i)$ is 
conjugate in $\pi_1X$ to $\pi_1Y_i$ (rather than to a proper \emph{super}group of $\pi_1Y_i$).  For classical presentations, being 
minimal corresponds to the situation where relators 
are not proper powers. 
\end{defn}

Let $\langle X\mid\{Y_i\}_{i\in\mathcal I}\rangle$ be a cubical presentation.  Suppose for each $i\in\mathcal I$ 
that $\widehat Y_i\to Y_i$ is a finite connected regular cover.  Then the maps $\widehat Y_i\to Y_i\to X$ give a new cubical 
presentation $\langle X\mid\{\widehat Y_i\}_{i\in\mathcal I}\rangle$.  Suppose that $\langle X\mid\{Y_i\}_{i\in\mathcal I}\rangle$ satisfies 
the uniform $C''(\alpha)$ condition for some $\alpha\in[0,1)$.  Let $\beta=\frac{\|Y_i\|}{\|\widehat Y_i\|}$.  Then, as can 
be 
seen from Definition~\ref{defn:metric_small_c}, the cubical presentation $\langle X\mid\{\widehat Y_i\}_{i\in\mathcal I}\rangle$ satisfies 
the uniform $C''(\alpha\beta)$ condition.


\begin{thmi}[Acylindrical hyperbolicity from cubical small cancellation]\label{thmi:main}
Let $X$ be a compact connected nonpositively curved cube complex and $\{Y_i\to X\}_{i\in\mathcal I}$ be a collection of 
local isometries with each $Y_i$ a compact connected nonpositively curved cube complex.
\begin{enumerate}[(I)]
     \item There exists a constant $L_0=L_0(X)$ so that the following 
holds.  Let $\alpha_0=\min\{\frac{1}{144},\frac{1}{7L_0}\}$ and let $\alpha\in[0,\alpha_0]$.  Let $\langle 
X\mid\{Y_i\}_{i\in\mathcal I}\rangle$ be a minimal cubical presentation satisfying the uniform $C''(\alpha)$ condition, and 
let $G$ be the group given by this cubical presentation.  Then one of the following holds:
\begin{enumerate}[(a)]
     \item  $G$ is finite or two-ended.\label{item:VC}
     \item The universal cover $\widetilde X$ of $X$ contains a convex $\pi_1X$--invariant subcomplex 
splitting as a product \label{item:prod}
of unbounded cube complexes, each $Y_i$ is contractible, and $G\cong\pi_1X$\label{item:product}.
\item $G$ is acylindrically hyperbolic.\label{item:AH}
\end{enumerate}
\item \label{item:good}Let $\langle X\mid\{Y_i\}_{i\in\mathcal I}\rangle$ be a cubical presentation satisfying the uniform $C''(\beta)$ condition for some $\beta<1$.  Then there exists 
$L_1=L_1(X,\{Y_i\to X\}_{i\in\mathcal I})$ such that the following holds.  For each $i\in\mathcal I$, let $\widehat Y_i\to Y_i$ be a 
finite connected regular cover such that the cubical presentation $\langle X\mid\{\widehat Y_i\}_{i\in\mathcal I}\rangle$ satisfies the 
uniform $C''(\alpha)$ condition for some $\alpha<\min\{\frac{1}{144},\frac{1}{7L_1}\}$.  Let $G$ denote the associated 
quotient of $\pi_1X$.  Then $$G\cong\pi_1X/\langle\langle\{\pi_1\widehat Y_i:i\in\mathcal I\}\rangle\rangle$$ satisfies one of ~\eqref{item:VC},\eqref{item:prod}, or~\eqref{item:AH}.  

In particular, $G$ satisfies one of the given conclusions whenever $\inf_{i\in\mathcal I}\|\widehat Y_i\|/\|Y_i\|$ is sufficiently large, in 
terms of $X$ and the $Y_i\to X$.
\end{enumerate}
\end{thmi}

The first statement is attractive since the small cancellation condition needed to get acylindrical hyperbolicity depends 
only on $X$.  The second statement is extremely useful in practice: one only needs to find a cubical presentation satisfying the uniform $C''(\beta)$ condition for some $\beta<1$ (but not 
necessarily satisfying e.g. the uniform $C''(\frac{1}{144})$ condition), with each $\pi_1Y_i$ residually finite, and then one can construct 
infinitely many acylindrically hyperbolic quotients of $\pi_1X$.

\begin{rem}[The constants $L_0$ and $L_1$]\label{rem:L} We give here a brief preview on the computation of these constants.
For simplicity, assume in this remark that $\widetilde X$ does not contain a proper $\pi_1X$--invariant convex subcomplex.
The general case is handled in the proof of Theorem~\ref{thmi:main} by replacing $\widetilde X$ by its Caprace-Sageev 
\emph{essential core}.  That is, in the general case, the constants $L_0,L_1$ are computed in the same way, except using the 
essential core of $\widetilde X$ instead of $\widetilde X$.

The constant $L_0(X)$ is defined as follows.  Let $\contact \widetilde X$ be the \emph{contact graph} of $\widetilde X$, 
which is the intersection graph of the set of hyperplane carriers; this was defined, and shown to be hyperbolic, 
in~\cite{Hagen:quasi_arb}.  Let $\mathcal L_0$ be the set of $\tilde g\in\pi_1X$ that act on 
$\contact \widetilde X$ as loxodromic isometries.  From results 
in~\cite{CapraceSageev:rank_rigidity} and~\cite{Hagen:boundary}, $\mathcal L_0\neq\emptyset$ if and only if 
$\widetilde X$ does not split as the product of unbounded CAT(0) cube 
complexes.  We can take $L_0(X)=\inf_{\tilde g\in\mathcal L_0}\inf_{\tilde a\in\widetilde 
X^{(0)}}\dist_{\widetilde X}(\tilde a,\tilde g\tilde a)$.  So, $L_0(X)<\infty$ unless $\widetilde X$ is a product.  Moreover, 
since we are considering combinatorial translation length, $L_0(X)\geqslant 1$.  We 
need minimality of the presentation only in the proof of Lemma~\ref{lem:acyl_regime}, where we need to know that an 
element $\tilde g\in\mathcal L_0$ of minimal translation length cannot stabilise some $\widetilde Y_i$.  Minimality 
guarantees that $\tilde g\in\stabilizer_{\pi_1X}(\widetilde Y_i)$ is conjugate into $\pi_1Y_i$; our small cancellation 
condition, which requires 
$\|Y_i\|>7L_0$, rules this out.

In case~\eqref{item:good}, we examine the elements $\tilde g\in\mathcal L_0$ that stay loxodromic on the hyperbolic graph 
obtained from the contact graph by coning off each $\widetilde Y_i$.  Lemma~\ref{lem:loxodromic_on_coned_contact_graph} says 
that either $\pi_1X$ stabilises some $\widetilde Y_i$ (leading to conclusion~\eqref{item:VC}), or the set $\mathcal L_1$ of 
such $\tilde 
g$ is nonempty.  We then take $L_1$ to be the minimal translation length of such $\tilde g$ on $\widetilde X$.  Note 
that $L_1$ depends on $X$ and the initial choices $Y_i\to X$, but the latter dependence is only via the action of $\pi_1X$ 
on $\widetilde X$ and the collection of $\widetilde Y_i\subseteq\widetilde X$.  So $L_1$ does not depend on the further covers 
$\widehat Y_i\to Y_i$.  The constant $L_1$ is used in one spot, in Lemma~\ref{lem:acyl_regime}, to ensure that $\tilde 
g\in\mathcal L_1$ does not stabilise any $\widetilde Y_i$, and can be chosen with translation length less than $\|\widehat 
Y_i\|/7$.
\end{rem}

\begin{rem}[The purely cubical case]\label{rem:purely_cubical}
In the ``purely cubical'' case, where $\mathcal I=\emptyset$, Theorem~\ref{thmi:main} 
says that the fundamental group of a compact nonpositively curved cube complex $X$ is either elementary or 
acylindrically hyperbolic, unless the universal cover of $X$ contains a $\pi_1X$--invariant convex subcomplex decomposing as a product with unbounded factors.  This  also follows from various results in 
the literature.  
In~\cite{BHS:HHS_I}, it is shown that, under 
natural extra hypotheses, the action of $\pi_1X$ on the contact graph $\contact\widetilde X$ is acylindrical, and $\contact\widetilde X$ is 
unbounded in the absence of an invariant product subcomplex.  Even without the extra hypotheses, any $g\in\pi_1X$ acting 
loxodromically on $\contact\widetilde X$ actually acts as a WPD element in the sense of~\cite{BestvinaFujiwara}, by~\cite[Proposition 
5.1]{BHS:HHS_I}.  Together with results in~\cite{Hagen:boundary} characterizing the loxodromic isometries of 
$\contact\widetilde X$, and a result of Osin connecting WPD elements to acylindricity~\cite{Osin:acyl}, this implies the 
virtually cyclic/product/acylindrically hyperbolic trichotomy of Theorem~\ref{thmi:main} in the case where $\mathcal 
I=\emptyset$.  This trichotomy, in the purely cubical case, also follows from the Caprace-Sageev rank rigidity 
theorem~\cite{CapraceSageev:rank_rigidity} and general results about groups acting on CAT(0) spaces and containing rank one 
elements~\cite{Osin:acyl,Sisto:contracting}.  

In the case where $\mathcal I=\emptyset$, our proof of Theorem~\ref{thmi:main} reduces to a proof that elements that are loxodromic on the contact graph 
act as WPD elements, along similar lines to the proof 
in~\cite{BHS:HHS_I}.
\end{rem}

\begin{rem}[Classical and graphical cases]  The comparison with the acylindrical hyperbolicity result of Gruber-Sisto, for 
graphical small cancellation groups (as formulated in~\cite{Gruber:thesis}), is interesting; our results about cubical small 
cancellation groups do not follow from corresponding results about graphical small cancellation presentations, since the 
latter viewpoint does not fully account for high-dimensional cubes.

On the other hand, restricting Theorems~\ref{thmi:classification_of_triangles} and~\ref{thmi:main} to the case where 
$\dimension X=1$ and each $Y_i$ is a graph, one does not reprove the results of~\cite{GruberSisto:graphicalJ} 
or~\cite{Strebel} in full generality, since the cubical $C'(\frac{1}{144})$ condition is more restrictive than the 
conditions needed in the classical and graphical cases (which are the classical $C'(\frac16)$ and the 
graphical $Gr(7)$ conditions, respectively).
\end{rem}

\begin{rem}[Acylindrical action on a quasi-tree]
Combining Theorem~\ref{thmi:main} with a recent result of Balasubramanya~\cite{Balasubramanya} shows that any group covered 
by Theorem~\ref{thmi:main} either satisfies one of the first two conclusions or 
acts acylindrically and non-elementarily on a quasi-tree.   
\end{rem}

\subsection*{On the proof of Theorem~\ref{thmi:main}} Theorem~\ref{thmi:main} is proved roughly as follows.  First, 
we create a hyperbolic $\pi_1X^*$--space $\hyp$ by coning off each hyperplane carrier $\neb(H)$ and each relator $Y_i$ in 
the generalized Cayley graph $\cay(X^*)$.  This procedure is a common generalization of the constructions used in the purely 
cubical case and in the graphical case from ~\cite{GruberSisto:graphicalJ}.

We apply 
Theorem~\ref{thmi:classification_of_triangles} to show that suitable $g\in\pi_1X^*$ acting loxodromically on $\hyp$ is a 
WPD isometry of $\hyp$; Osin's theorem says that $\pi_1X^*$ is acylindrically hyperbolic or virtually 
cyclic. This is done in Section~\ref{sec:metalemma}.  Hyperbolicity of $\hyp$ also uses
Theorem~\ref{thmi:classification_of_triangles} and is checked in Section~\ref{sec:hyp}.

It remains to find suitable loxodromic isometries of $\hyp$.  This is done in Section~\ref{sec:finding_loxodromic}. First, 
we show that if $\tilde g\in\pi_1X$ acts loxodromically on the contact graph  $\contact \widetilde X$, and axes of $\tilde g$ have 
suitably bounded interaction with elevations of relators $Y_i$, then the image $g\in\pi_1X^*$ of $\tilde g$ is loxodromic on 
$\hyp$. This is accomplished in Lemma~\ref{lem:loxodromic_persist_1}, Lemma~\ref{lem:characterising_a_shortest}, and 
Lemma~\ref{lem:loxodromics_persist_2}:
\begin{itemize}
     \item In the first lemma, we show that $\tilde g$ must remain loxodromic on the graph obtained from the contact graph 
by coning off the various subgraphs associated to the various $\widetilde Y_i\subseteq\widetilde X$, unless $\tilde g$ has a 
power conjugate into some $\pi_1Y_i$.
\item In the second lemma, we show that if $\tilde g$ has suitably bounded interaction with elevations of relators, then $g$ 
has infinite order and  $\tilde g$ is loxodromic on the coned-off contact graph.
\item In the third lemma, we deduce that $g$ must be loxodromic on $\hyp$.  The idea is to use disc diagrams in $\widetilde 
X^*$ to show that axes of $\tilde g$ in the coned-off contact graph project to quasigeodesic axes in $\hyp$ for $g$.  Since 
$\tilde g$ was loxodromic upstairs, $g$ must therefore be loxodromic downstairs.
\end{itemize}

The ``suitably bounded interaction'' hypothesis is made precise in 
Definition~\ref{defn:short_embeddable_asystolic}: $\tilde g$ must be \emph{asystolic}.  Up to this point, we only require 
the uniform $C''(\frac{1}{144})$ condition.

In Lemma~\ref{lem:acyl_regime}, we show that under either of the hypotheses in Theorem~\ref{thmi:main}, we can find an 
asystolic element.  The remainder of the proof is essentially an application of results 
in~\cite{CapraceSageev:rank_rigidity} and~\cite{Hagen:boundary} characterising when $\mathcal L_0\neq\emptyset$.

\begin{rem}[No proof by cubulation]\label{rem:defensive}
Cubical small cancellation theory is partly motivated by the fact that groups satisfying strong classical small cancellation 
conditions act nicely on CAT(0) cube complexes~\cite[Theorem 1.2]{Wise:small_can_cube}.  This generalizes in various ways to 
cubical presentations:
 if $\langle X\mid\{Y_i\}_{i\in\mathcal I}\rangle$ satisfies the \emph{generalized $B(6)$} condition, one can often cubulate the 
corresponding group; see, for instance,~\cite[Corollary 5.45]{Wise:QCH}.  

It is tempting to try to prove Theorem~\ref{thmi:main} using this approach, together with the above-mentioned results about 
acylindrical hyperbolicity of groups acting on cube complexes.  However, there are various problems with this approach.  
For example, the generalized $B(6)$ condition requires each $Y_i$ to have a wallspace structure, compatible with the local 
isometry $Y_i\to X$, generalizing the wallspace structure on a circle in which each wall is a pair of antipodal points.  
(Compare with the \emph{lacunary walling} condition on graphical presentations from~\cite{ArzhantsevaOsajda}.)

No cubical $C'(\alpha)$ small cancellation condition implies the generalized $B(6)$ condition, and indeed 
there are groups that are covered by Theorem~\ref{thmi:main} but which do not admit an action on a CAT(0) cube complex with 
no global fixed point.  This can already be seen in the $1$--dimensional case: Proposition~7.1 of~\cite{OllivierWise} yields, 
for any $\alpha>0$, a graphical presentation $\langle X\mid Y\rangle$, where $X$ is a graph and $Y\to X$ an immersed graph, 
satisfying the graphical (hence $1$--dimensional cubical) $C'(\alpha)$  condition, with the additional property that the 
group thus presented has Kazhdan's property $(T)$, and therefore cannot act fixed point-freely on a CAT(0) cube 
complex~\cite{NibloRoller}.
\end{rem}

\begin{rem}[Relationship with rotating families]\label{rem:rotating}
One can imagine an alternate approach to Theorem~\ref{thmi:main} using tools from~\cite{DGO}.  The idea would be to consider 
the action of $\pi_1X$ on (some graph quasi-isometric to) the contact graph $\contact\widetilde X$, and consider the images 
of the various $\widetilde Y_i$ under projection to $\contact\widetilde X$.  In cases where $\pi_1X$ acts acylindrically on 
$\contact\widetilde X$, it is not hard to deduce from the small cancellation conditions that $\pi_1X$ acts acylindrically on 
the graph $\widehat{\hyp}$ obtained by coning off the $\widetilde Y_i$-subgraphs of $\contact\widetilde X$, using results in~\cite{DGO}.  

However, we see the following challenge in a possible proof using the ``geometric small cancellation'' setup from~\cite{DGO}. Some hyperplane $H$ crossing $\widetilde Y_i$ can have unbounded intersection with $\widetilde Y_i$, because the 
definition of a wall-piece \emph{excludes the case of hyperplanes that actually intersect $\widetilde Y_i$} (see 
Definition~\ref{defn:abstract_wall_piece}).  So, $\stabilizer_{\pi_1X}(\widetilde Y_i)$ can contain elements of 
$\stabilizer_{\pi_1X}(H)$, which is a vertex-stabiliser in $\contact\widetilde X$.  So, $\stabilizer_{\pi_1X}(\widetilde 
Y_i)$ need not be purely loxodromic on $\contact\widetilde X$.  At the same time, we suspect it might still be possible to use a 
\emph{variant} of $\widehat{\hyp}$ where the $\stabilizer_{\pi_1X}(\widetilde Y_i)$ subgroups act as a \emph{rotating family} and 
allow one to use~\cite{DGO} to conclude that the action of $\pi_1X^*$ on this variant graph has WPD elements.

An explicit uniform $C''(\frac{1}{144})$ example where $\stabilizer_{\pi_1X}(\widetilde Y)$ is not purely loxodromic: let 
$X$ be a compact nonpositively curved cube complex so that some unbounded hyperplane $H$ of $\widetilde X$ has uniformly 
bounded coarse intersection with all other hyperplanes (including translates of $H$).  Let $\widetilde Y=H$, let $\bar 
Y=\stabilizer_{\pi_1X}(H)\backslash\, \mathcal N(H)$, and let $Y\to\bar Y$ be a regular cover of sufficiently large girth.  Then 
$\langle X\mid Y\to X\rangle$ is a cubical $C''(\frac{1}{144})$ presentation, but $\pi_1Y$ fixes a point in 
$\contact\widetilde X$.  For example, one can take a closed hyperbolic surface $S$, take a finite filling family of simple 
closed curves, no two of which are parallel, and take $\widetilde X$ to be the cube complex dual to the system of walls on 
$\widetilde S$ obtained by lifting the curves.  Then $H$ corresponds to an elevation of one of the curves, and $Y$ 
corresponds to a very long circle covering that curve.
%
%
\end{rem}

\subsection*{Examples of cubical small cancellation groups}\label{subsec:examples}
We list here some examples of cubical small cancellation groups to which 
Theorem~\ref{thmi:classification_of_triangles} and Theorem~\ref{thmi:main} apply. We have earlier mentioned classical and 
graphical small cancellation presentations to which our results apply, as well as the purely cubical case where $X$ is a nonpositively 
curved cube complex and $\mathcal I=\emptyset$.

\begin{enumerate}
 \item Classical/RAAG hybrid: let $X$ be the Salvetti complex of a right-angled Artin group $A$, with presentation graph 
$\mathbb G$ (this means, by definition, that $A$ has a generator for each vertex of $\mathbb G$, with two generators commuting if and only if the corresponding vertices are adjacent in $\mathbb G$), and 
let $\{g_i\}_{i\in\mathcal I}$ be a finite collection of independent elements, none of which is supported 
on a 
proper join in $\mathbb G$ (i.e. each $g_i$ is a rank-one isometry of $\widetilde X$).  So, each $\langle g_i\rangle$ has a 
convex cocompact core $\widetilde Y_i$ in $\widetilde X$.  Then for each $i$ there exists $n_i>0$ so that, letting 
$Y_i=\langle g_i^{n_i}\rangle\backslash\widetilde Y_i$, the cubical presentation $\langle X\mid \{Y_i\}_{i\in\mathcal I}\rangle$ is a 
$C''(\frac{1}{144})$ presentation.  We will see below that $L_0(X)=2$ in this case.  So, if no $g_i$ is a proper power, 
Theorem~\ref{thmi:main} shows that $\langle X\mid \{Y_i\}_{i\in\mathcal I}\rangle$ presents a quotient of $A$ which is finite, virtually 
$\integers$, or acylindrically hyperbolic.  Even if the $g_i$ are proper powers, then taking $n_i$ sufficiently large (in 
terms of the $g_i$), we obtain the same conclusion from Theorem~\ref{thmi:main}.     

Furthermore, instead of cyclic subgroups, one could use appropriately chosen \emph{purely loxodromic} 
subgroups as described in~\cite{KMT}, which are necessarily free. 

 \item More generally, let $X$ be a compact connected nonpositively curved cube complex.  Let $\{Y_i\to 
X\}_{i\in\mathcal I}$ be a collection of local isometries with each $Y_i$ a compact connected nonpositively curved cube complex so that the resulting cubical presentation $\langle X\mid 
\{Y_i\}_{i\in\mathcal I}\rangle$ satisfies the (uniform) cubical $C''(\alpha)$ condition for some $\alpha>0$.  Suppose that each $Y_i$ has a 
residually finite fundamental group. Thus, for any $n\in\naturals$, there is a finite regular cover $\widehat Y_i\to 
Y_i$ with $\|\widehat Y_i\|\geqslant n\|Y_i\|$.  Thus, the related cubical presentation $\langle X\mid \{\widehat 
Y_i\}_{i\in\mathcal I}\rangle$ 
satisfies the (uniform) cubical $C''(\frac{\alpha}{n})$ condition.

If $\langle X\mid 
\{Y_i\}_{i\in\mathcal I}\rangle$ was a minimal presentation, $n$ can be chosen in terms of $X$ only when applying Theorem~\ref{thmi:main}.  
If not, then $n$ must be chosen sufficiently large in terms of $X$ and the initial $Y_i$.

 \item Given letters $x,y$ and $m\geqslant1$, let $(x,y)^m$ denote the first half of the word $(xy)^m$.  Consider the Artin 
group $$A=\langle a_1,a_2,\ldots,a_n\mid (a_i,a_j)^{m_{ij}}=(a_j,a_i)^{m_{ij}} \hbox{ whenever }i\neq j\rangle.$$
 (Note that we follow the convention of letting $m_{ij}=\infty$ to indicate that there is no relation between $a_i,a_j$.)
 Let $\widehat A=\langle a_1,a_2,\ldots,a_n\mid [a_i,a_j] \hbox{ whenever } m_{ij}=2\rangle$ be the underlying right-angled 
Artin group, $X$ be its Salvetti complex, and
 $\mathbb G$ be its presentation graph (so, a graph with a vertex for each $a_i$ and with an edge from $a_i$ to $a_j$ when 
$m_{ij}=2$).
That is, $A$ is a quotient of $\widehat A$ obtained by adding the relations $(a_i,a_j)^{m_{ij}}=(a_j,a_i)^{m_{ij}}$ when 
$m_{ij}\not\in\{2,\infty\}$.  Suppose that $\mathbb G$ does not decompose as a nontrivial join.  Moreover, suppose that for all 
$i,j$ with   $m_{ij}\not\in\{2,\infty\}$, the element $g_{ij}=(a_i,a_j)^{m_{ij}}(a_j,a_i)^{-m_{ij}}$ of $\widehat A$, is not 
supported in a join in $\mathbb G$.  Then $g_{ij}$ is a rank-one isometry of $\widetilde X$. 

Hence, there is a convex subcomplex $\widetilde Y_{ij}$ of 
$\widetilde X$ on which $\langle g_{ij}\rangle=\stabilizer_{\widehat A}(\widetilde Y_{ij})$ acts cocompactly, and which is 
just the convex hull of a combinatorial $g_{ij}$--axis.  Let $Y_{ij}$ be the quotient of $\widetilde Y_{ij}$ by the $\langle 
g_{ij}\rangle$--action, so that $\langle X\mid Y_{ij} \hbox{ whenever } 2<m_{ij}<\infty\rangle$ is a minimal cubical 
presentation for the Artin group $A$.  Clearly, $Y_{ij}$ has systole $2m_{ij}$.

If $\tilde P$ is a cone-piece between $\widetilde Y_{ij}$ and $\widetilde Y_{k\ell}$, then $|\tilde P|=1$.  Since $g_{ij}$ 
is not supported in a join, nontrivial wall-pieces have length $1$.

Finally, $L_0(X)=2$, since $\widehat A$ contains words of length $2$ that represent rank-one elements not stabilising 
hyperplanes.   
 
Suppose that $A$ satisfies the following: for all $i\neq j$, either $m_{ij}=2$ or $m_{ij}=\infty$ or $m_{ij}>72$ and no 
generator commutes with $a_i$ and $a_j$.  Then the above cubical presentation for  $A$ is $C''(\frac{1}{144})$ (and hence 
$C''(\frac{1}{7L_0})$).  So Theorem~\ref{thmi:main} implies $A$ is virtually cyclic or acylindrically hyperbolic.

There is a related recipe in Section~20 of~\cite{Wise:QCH}  for building $C(6)$ cubical presentations of Artin groups
(cf.~\cite{AppelSchupp}) but it is harder to see when these are $C''(\frac{1}{144})$.  
 \end{enumerate}

\subsection*{Outline of the paper}\label{subsec:outline}
In Section~\ref{sec:background}, we recall background on acylindrical hyperbolicity and WPD 
elements.  Section~\ref{sec:cubical_pres} contains a discussion of cubical presentations, disc diagrams, and 
the parts of cubical small cancellation theory needed in the proof of the classification of triangles, 
Theorem~\ref{thmi:classification_of_triangles}, which also occurs in this section.  The proof uses the theory developed 
in~\cite{Wise:QCH}.

In Section~\ref{sec:metalemma}, we give 
a list of conditions on a cubical small cancellation group $G$ acting on a hyperbolic space $\hyp$ sufficient to ensure that 
$G$ contains an element $g$ acting on $\hyp$ as a WPD element.  Specifically, we 
use Theorem~\ref{thmi:classification_of_triangles} to show that any $g\in G$ acting loxodromically on a space $\hyp$ 
satisfying the given conditions acts as a WPD element.    In Section~\ref{sec:hyp}, we produce such a space $\hyp$, formed 
from a generalized Cayley graph $\cay(X^*)$ by coning off both the relators and the hyperplane carriers.  
Theorem~\ref{thmi:classification_of_triangles} is also used here to check that $\hyp$ is hyperbolic.  Finally, in 
Section~\ref{sec:finding_loxodromic}, we prove Theorem~\ref{thmi:main}.

We assume basic knowledge of CAT(0) and nonpositively curved cube complexes and cubical presentations; we 
refer the reader to~\cite{Wise:QCH} for most of the background.  Most of the 
material that we will need 
from~\cite{Wise:QCH} is restated below, although for some more technical points we will refer the reader to~\cite{Wise:QCH} 
with various citations.

\subsection*{Acknowledgments}\label{subsec:Acknowledgments}
MFH thanks the Fakult\"at f\"ur Mathematik of Universit\"at Wien for hospitality during a visit in which much of the work on 
this project was completed, and is also grateful to Dani Wise for numerous discussions about cubical small cancellation 
theory over the years. Both authors thank Kasia Jankiewicz for answering a question about short inner paths, and the referee for a large number of very helpful comments.

\section{Acylindrical hyperbolicity and WPD elements}\label{sec:background}
The notion of an \emph{acylindrically hyperbolic group} was defined in~\cite{Osin:acyl}, as follows:

\begin{defn}[Acylindrical action, acylindrical hyperbolicity]\label{defn:acyl_action}
Let $(X,\dist)$ be a metric space and let $G$ act on $X$ by isometries.  Then the action is \emph{acylindrical} if for each 
$\epsilon\geqslant0$, there exists $R\geqslant0$ and $N\in\naturals$ so that for all $x,y\in X$ for which 
$\dist(x,y)\geqslant R$, we have $$|\{g\in G:\dist(x,gx)\leqslant\epsilon,\dist(y,gy)\leqslant\epsilon\}|\leqslant N.$$
Let $X$ be Gromov-hyperbolic and let $G$ act by isometries on $X$.  The action of $G$ is \emph{elementary} if the limit set 
of $G$ in $\boundary X$ has at most two points.  If $G$ acts non-elementarily and acylindrically on a hyperbolic space, then 
$G$ is \emph{acylindrically hyperbolic}.
\end{defn}

When $G$ is a cubical small cancellation group, we will construct an explicit action of $G$ on a hyperbolic space $\hyp$, 
but this will not necessarily be the action that witnesses acylindrical hyperbolicity.  Instead, the action will be such 
that $G$ contains a \emph{WPD isometry of $\hyp$}.

\begin{defn}[WPD element~\cite{BestvinaFujiwara}]\label{defn:wpd}
Let $G$ act by isometries on the space $X$.  Then $h\in G$ is a \emph{WPD element} if for each $\epsilon>0$ and each $x\in 
X$, there exists $M\in\naturals$ so that $$|\{g\in 
G:\dist(x,gx)\leqslant\epsilon,\dist(h^Mx,gh^Mx)\leqslant\epsilon\}|<\infty.$$
\end{defn}

In~\cite{Osin:acyl}, Osin showed that if $G$ is not virtually cyclic and acts on a hyperbolic space $\hyp$, and some $g\in 
G$ acts on $\hyp$ as a loxodromic WPD element, then $G$ is acylindrically hyperbolic.  This is instrumental in the 
proof of Theorem~\ref{thmi:main}.

\section{Triangles in cubical small cancellation groups}\label{sec:cubical_pres}
In this section, $X$ denotes a connected nonpositively curved cube complex with universal cover $\widetilde X$.  When doing 
geometry in $\widetilde X$, we never use the CAT(0) metric and instead only use the usual graph metric on $\widetilde 
X^{(1)}$ in which each $1$--cube has length $1$ and a combinatorial path is geodesic if and only if it contains at most one 
edge intersecting each hyperplane of $\widetilde X$.  

\begin{notation}[Carriers and neighbourhoods]
Let $X$ be a nonpositively curved cube complex and let $H\to X$ be an 
immersed hyperplane.  The \emph{(abstract) carrier} of $H$ is $\neb(H)=H\times[-\frac12,\frac12]$, equipped with the product 
cubical structure, where $H$ has a nonpositively curved cubical structure coming from $X$ and $[-\frac12,\frac12]$ is a 
$1$--cube.  The map $H\to X$ extends to a cubical map $\neb(H)\to X$, see e.g.~\cite[Section 2.g]{Wise:QCH}.  When $X$ is 
CAT(0), this map is an embedding whose image is a convex subcomplex $\neb(H)$, the \emph{carrier} of the hyperplane $H$.  In 
this case, $\neb(H)$ is just the union of the closed cubes that intersect $H$.

Given an arbitrary metric space $M$ and a subspace $H$, we denote by $\neb_r(H)$ the closed $r$--neighbourhood of $H$ in 
$M$.  The similarity in notation is justified by the fact that, when $X$ is a CAT(0) cube complex and $H$ is a hyperplane, 
$\neb(H)^{(0)}=\neb_{\frac12}(H\cap \widetilde X^{(1)})^{(0)}$.
\end{notation}

\subsection{Cubical presentations}  We fix a (possibly infinite) index set $\mathcal I$, and for each 
$i\in\mathcal I$, let $Y_i\to X$ be a local isometry of connected nonpositively curved cube complexes.  Each $Y_i\to X$ is a $\pi_1$--injection~\cite[Corollary 2.12]{Wise:QCH}.

Following~\cite{Wise:QCH}, the associated \emph{cubical presentation} is $\langle X\mid\{Y_i\}_{i\in\mathcal 
I}\rangle$ and the corresponding \emph{cubical presentation complex} $X^*$ is formed as follows.  For each $i\in\mathcal I$, 
let $C(Y_i)$ be the \emph{relator on $Y_i$}, i.e. the space formed from $Y_i\times[0,1]$ by collapsing $Y_i\times\{1\}$ to a 
point.  This space has an obvious cell-structure so that $Y_i\stackrel{\sim}{\to}Y_i\times\{0\}\hookrightarrow C(Y_i)$ is a 
combinatorial embedding.  For each $i\in\mathcal I$, we attach $C(Y_i)$ to $X$ along $Y_i\times\{0\}$ using the above local 
isometry.  The resulting complex is $X^*$. The group of our interest is defined by $G=\pi_1X^*$.  

We say that $\langle 
X\mid\{Y_i\}_{i\in\mathcal I}\rangle$ is a \emph{cubical presentation for $G$}. We refer to the complexes $Y_i$, or to the local isometries $Y_i\to X$, as \emph{relators} of such a cubical presentation.

The universal cover $\widetilde X^*$ of $X^*$ is a nonpositively curved cube complex with cones attached.  Let $\cay(X^*)$ 
be the part of $\widetilde X^*$ consisting only of cubes (i.e. the complement of the open cones).  This is the 
\emph{generalized Cayley graph} of $G$ with the given cubical presentation.  Note that there are covering maps $\widetilde 
X\to\cay(X^*)\to X$; the generalized Cayley graph is the nonpositively curved cube complex obtained by taking the cover of 
$X$ corresponding to the kernel of $\pi_1X\to\pi_1X^*$.

\begin{rem}(Classical and graphical presentations)
If $X$ is a wedge of circles and each $Y_i$ is an immersed combinatorial circle, then $\langle X\mid\{Y_i\}_{i\in\mathcal 
I}\rangle$ is a
group presentation in the usual sense (each $C(Y_i)$ is a disc) and $\cay(X^*)$ is the associated Cayley graph of $G$.  As 
mentioned in~\cite[Examples 3.s]{Wise:QCH}, if 
$X$ is a graph and each $Y_i$ is an immersed graph, then the above cubical presentation is a graphical presentation in the 
sense of~\cite{RipsSegev,Gromov:random,Ollivier}.
\end{rem}

\begin{rem}[Elevations]\label{rem:elevations}
The local isometries $Y_i\to X$ lift to local isometries $Y_i\to\cay(X^*)$ (in fact, under the small cancellation 
conditions we shall soon be assuming, the latter maps are embeddings~\cite[Section~4]{Wise:QCH}).  We use the term 
\emph{elevation} to refer to a lift $\widetilde Y_i\to\widetilde X$ of the map $\widetilde Y_i\to Y_i\to X$, where 
$\widetilde Y_i\to Y_i$ is the universal covering map.  Since $Y_i\to X$ is a local isometry, it is $\pi_1$--injective and 
$\widetilde Y_i\to\widetilde X$ is a combinatorial embedding with convex image.  
\end{rem}

\subsection{Cubical small cancellation conditions}   We now review background about cubical small cancellation theory, 
following~\cite{Wise:QCH}.  By a \emph{trivial path} in a 
cube complex, we mean a combinatorial path that does not traverse any edges, i.e. one whose image is a single 
vertex. Throughout the paper, all paths are combinatorial and, if $P$ is a combinatorial path, then $|P|$ denotes its 
length; in particular, $|P|=0$ when $P$ is trivial.  

%

\begin{defn}[Wall-projection]\label{defn:wall_proj}
Let $\widetilde X$ be a CAT(0) cube complex, and let $U,V$ be convex subcomplexes.  Let $\mathrm{Proj}(U\to V)$ be the subcomplex of $V$ defined as follows.  

First, two cubes $c,c'$ of $\widetilde X$ are \emph{parallel} if they intersect exactly the same hyperplanes. In particular, any two $0$--cubes are parallel.

A cube $\bar u$ of $V$ belongs to $\mathrm{Proj}(U\to V)$ if and only if there is a cube $u$ of $U$ such that $\bar u$ and $u$ are parallel, and $\bar u$ is the nearest cube of $V$ that is parallel 
to $u$~\cite[Definition 3.4]{Wise:QCH}.
\end{defn}


\begin{rem}[Gate maps]
Throughout, we will often want to closest-point project to convex subcomplexes in a CAT(0) cube complex.  The way to do this 
is to use the \emph{gate map}.  Let $\widetilde X$ be a CAT(0) cube complex and let $C$ be a convex subcomplex.  Then there 
is a map $\gate_C\colon\widetilde X\to C$ with the following properties:
\begin{itemize}
     \item $\gate_C$ is the closest point projection with respect to the combinatorial metric, i.e. for all $x\in\widetilde 
X^{(0)}$, the vertex $\gate_C(x)$ is the unique closest vertex of $C$ to $x$.
     \item $\gate_C$ is $1$--lipschitz on $\widetilde X^{(0)}$, with respect to the combinatorial metric.
     \item If $x\in\widetilde X$ and $H$ is a hyperplane, then $H$ separates $x,\gate_C(x)$ if and only if $H$ separates 
$x,C$.
\item If $x,y\in\widetilde X$ and $H$ is a hyperplane, then $H$ separates $\gate_C(x),\gate_C(y)$ if and only if $H$ 
intersects $C$ and separates $x,y$.
\end{itemize}
More information about gate maps can be found in e.g.~\cite[Section 2.1]{BHS:HHS_I}.    
\end{rem}

\begin{rem}[Gate maps and wall-projections]
Lemma 3.6 in~\cite{Wise:QCH} implies that if $U,V\subseteq\widetilde X$ are convex subcomplexes, then $\mathrm{Proj}(U\to V)=\gate_V(U)$, which is itself convex.
\end{rem}

Now we can define \emph{pieces}.  Let $\langle X\mid\{Y_i\}_{i\in\mathcal I}\rangle$ be a cubical presentation.

\begin{defn}[Wall-piece]\label{defn:abstract_wall_piece}
     Let $Y_i\to X$ be a relator.  Let $\widetilde Y_i\to\widetilde X$ be an elevation.  Let $H$ be a hyperplane of 
$\widetilde X$ that is \textbf{disjoint} from $\widetilde Y_i$.  Suppose that the subcomplex $\mathcal 
P=\mathrm{Proj}(\mathcal N(H)\to\widetilde Y_i)$ is not a single point.  Then $\mathcal P$ is an \emph{abstract wall-piece} in 
$\widetilde Y_i$.  We have a map $\mathcal P\hookrightarrow\widetilde Y_i\to Y_i\to X$.  A path $P\to\mathcal P$ is a 
\emph{wall-piece} of $H$ in $Y_i$, and gives a path $P\to Y_i$ via the preceding map.  If $\mathcal N(H)\cap\widetilde 
Y_i\neq\emptyset$, then $\mathcal P=\mathcal N(H)\cap\widetilde Y_i$, and we say that $P$ is a \emph{contiguous wall-piece}. 
\end{defn}

\begin{defn}[Cone-piece]\label{defn:abstract_cone_piece}
Let $Y_i,Y_j\to X$ be relators.  Fix an elevation $\widetilde Y_i\to\widetilde X$.  An \emph{abstract cone-piece} of $Y_j$ 
in $Y_i$ is defined as follows.  Let $\widetilde Y_j\to\widetilde X$ be an elevation of $Y_j$, and let $\mathcal 
P=\mathrm{Proj}(\widetilde Y_j\to\widetilde Y_i)$.  

Then:
\begin{itemize}
     \item If $i\neq j$, the complex $\mathcal P$ is an abstract cone-piece of $Y_j$ in $Y_i$.
     \item If $i=j$ and $\widetilde Y_i\neq\widetilde Y_j$ (i.e. they are distinct $\pi_1X$--translates of $\widetilde 
Y_i$), then $\mathcal P$ is an abstract cone-piece of $Y_j$ in $Y_i$.
    \item If $i=j$ and $\widetilde Y_i=\widetilde Y_j$, then $\mathcal P$ is an abstract cone-piece  \emph{unless} the 
following holds:  Let $\mathcal P\to Y_i,Y_j$ be obtained by composing $\widetilde Y_i,\widetilde Y_j\to Y_i,Y_j$ with the 
inclusions of $\mathcal P$ into $\widetilde Y_i,\widetilde Y_j$ respectively.  Then there exists an automorphism $Y_i\to 
Y_j$ such that 
\begin{center}
     $
     \begin{diagram}
          \node{\mathcal P}\arrow{e}\arrow{s}\node{Y_i}\arrow{s}\arrow{sw}\\
          \node{Y_j}\arrow{e}\node{X}
     \end{diagram}
     $
\end{center}
commutes.
\end{itemize}
If $\mathcal P$ is an abstract cone-piece in $Y_i$, then we again have a map $\mathcal P\hookrightarrow\widetilde Y_i\to 
Y_i$.  A path $P\to\mathcal P$ is a \emph{cone-piece} in $Y_i$, and gives a path $P\to Y_i$, which we will refer to as a 
\emph{cone-piece} in $Y_i$ without reference to the elevations involved.  If $\widetilde Y_i,\widetilde Y_j$ intersect, then 
$P\to Y_i$ is a \emph{contiguous} cone-piece. 
\end{defn}

\begin{exmp}[Cubical proper powers]\label{exmp:proper_power}
The third case in Definition~\ref{defn:abstract_cone_piece} generalises to cubical small cancellation setting how relators that are proper powers are treated in classical small cancellation theory.  For instance, regard the presentation 
$\langle a,b\mid (ab)^2\rangle$ as a cubical presentation by taking $X$ to be a wedge of two oriented circles labelled $a,b$ and $Y$ to be a $4$--cycle mapping to $X$ according to the word $abab$.  
So, in the tree $\widetilde X$ (the Cayley graph of  the free group on $a,b$), we can take $\widetilde Y$ to be the axis of the element $ab$.  The lines $\widetilde Y$ and $ab\widetilde Y$ coincide --- the second is 
a translate of the first by $ab$.  Since $ab$ descends to an automorphism of $Y\to X$, the line $\widetilde Y=\mathrm{Proj}(ab\widetilde Y\to\widetilde Y)$ is \emph{not} an abstract 
cone-piece.   
\end{exmp}

\begin{rem}[Variant definition of a cone-piece]\label{rem:3.3}
As explained in Section~3 of~\cite{Wise:QCH}, cubical small cancellation theory works under a slightly different definition 
of a cone-piece.  Indeed, the definition can be changed in the following way.  Given $\widetilde Y_i$ as in 
Definition~\ref{defn:abstract_cone_piece}, we forbid $\mathrm{Proj}(\widetilde Y_j\to\widetilde Y_i)$ from being an abstract cone-piece whenever $i=j$ and $\widetilde Y_i=\widetilde Y_j$ (i.e. 
the elevations differ by an element of 
$\stabilizer_{\pi_1X}(\widetilde Y_i)$).  However, to use this definition, one needs to insist that each element of 
$\stabilizer_{\pi_1X}(\widetilde Y_i)$ projects to an automorphism of $Y_i\to X$.  Convention~3.3 in~\cite{Wise:QCH} 
arranges this by insisting that $\pi_1Y_i$ is normal in $\stabilizer_{\pi_1X}(\widetilde Y_i)$.  We have opted to avoid this 
convention, and use the definitions above, following Przytycki-Wise and Jankiewicz~\cite{PrzytyckiWise:mixed,Jankiewicz}.

Under our definition, we still have the following: if $\mathcal P=\mathrm{Proj}(\widetilde Y_j\to\widetilde Y_i)$, and 
$\mathcal P$ is not an abstract cone-piece in $Y_i$, then $i=j$ and $\widetilde Y_j=g\widetilde Y_i$ for some 
$g\in\stabilizer_{\pi_1X}(\widetilde Y_i)$.  Indeed, if $\mathcal P$ is not an abstract cone-piece in $Y_i$, then by definition, $i=j$ and there is an automorphism $\bar g:Y_i\to Y_i$ such that the diagram in 
Definition~\ref{defn:abstract_cone_piece} commutes.  Since the automorphism group of $Y_i\to X$ is $$\mathrm{Normaliser}_{\stabilizer_{\pi_1X}(\widetilde Y_i)}(\pi_1\widetilde Y_i)/\pi_1\widetilde 
Y_i,$$ we can lift $\bar g$ to some $g\in\stabilizer_{\pi_1X}(\widetilde Y_i)$.  

However, under our definition, the converse does not hold: if 
$g\in\stabilizer_{\pi_1X}(\widetilde Y_i)$ does not normalise $\pi_1\widetilde Y_i$, then let $\widetilde Y_j=g\widetilde Y_i$.  Let $\mathcal P=\mathrm{Proj}(\widetilde Y_j\to\widetilde 
Y_i)=\widetilde Y_i$.  Then 
since $g\colon\widetilde Y_i\to\widetilde Y_j$ does not descend to an automorphism of $Y_i\to X$, our definition says that the 
whole of $\widetilde Y_i$ is an abstract cone-piece.  

Under the metric small cancellation conditions we'll be using, this then implies that $Y_i$ is simply connected, and can be 
removed from the cubical presentation without changing the group being presented or weakening the small cancellation 
conditions on what remains.

So when working under any of the small cancellation conditions used in this paper, there would have been no loss of generality in assuming that $\pi_1Y_i$ is normal in 
$\stabilizer_{\pi_1X}(\widetilde Y_i)$ and using the definition of a cone-piece from~\cite{Wise:QCH}.  (Relatedly, very often we'll work with \emph{minimal} cubical presentations, see Definition~\ref{defn:minimal}, which have 
the built-in assumption that $\stabilizer_{\pi_1X}(\widetilde Y_i)$ is conjugate to $\pi_1Y_i$, or with cubical presentations that are obtained from the minimal ones by replacing the 
$Y_i$ with finite regular covers.)
\end{rem}

\begin{defn}[Piece]
A path $P\to Y_i$ which is either a cone-piece or a wall-piece is a \emph{piece}.
\end{defn}

In the case where $X$ is a wedge of circles and each $Y_i$ is an immersed circle, all wall-pieces are trivial, and cone-pieces 
correspond to pieces in the sense of classical small cancellation theory.

\begin{defn}[$C'(\alpha)$ condition, uniform 
$C''(\alpha)$ condition]\label{defn:metric_small_c}
The cubical presentation $\langle X\mid\{Y_i\}_{i\in\mathcal I}\rangle$ satisfies the \emph{cubical $C'(\alpha)$ 
small cancellation condition} if the following holds for all $i\in\mathcal I$: 
$\diam(\mathcal P)<\alpha\|Y_i\|$ for all abstract pieces $\mathcal P$ in 
$Y_i$, where $\|Y_i\|$ denotes the infimum of the lengths of essential closed 
paths in $Y_i$ (a closed path is \emph{essential} if it is not homotopic to a constant map).
In this case, we say that $\langle X\mid\{Y_i\}_{i\in\mathcal I}\rangle$ is a \emph{$C'(\alpha)$ presentation} and 
$G=\pi_1X^*$ is a \emph{$C'(\alpha)$ group}.

Note that if $|\mathcal I|<\infty$, then the $C'(\alpha)$ condition yields a 
uniform bound on the length of all pieces, namely $\alpha\max_{i\in\mathcal 
I}\|Y_i\|$.  

In Section~\ref{sec:finding_loxodromic}, we will use the stronger \emph{uniform 
$C''(\alpha)$ condition}.  The cubical presentation $\langle 
X\mid\{Y_i\}_{i\in\mathcal I}\rangle$ satisfies the \emph{uniform  
$C''(\alpha)$ small cancellation condition} if $\diam(\mathcal P)<
\alpha\|Y_i\|$ for all $i$, whenever $\mathcal P$ is an abstract piece (not 
necessarily in $Y_i$).  In this case, we say that $\langle X\mid\{Y_i\}_{i\in\mathcal I}\rangle$ is a \emph{$C''(\alpha)$ presentation} and 
$G=\pi_1X^*$ is a \emph{$C''(\alpha)$ group}. This condition is needed to maintain an upper bound on 
the sizes of pieces, needed, for example, in the proof of 
Lemma~\ref{lem:loxodromic_persist_1}.

Another way to phrase the uniform condition is to let 
$Y=\bigsqcup_{i\in\mathcal I}Y_i$, so that the local 
isometries $Y_i\to X$ induce a local isometry $Y\to X$.  Then the uniform 
$C''(\alpha)$ condition for $\langle X\mid\{Y_i\}_{i\in\mathcal I}\rangle$ is 
equivalent to the (non-uniform) cubical $C'(\alpha)$ condition for the presentation 
$\langle X\mid\{Y\}\rangle$ (except allowing disconnected relators).  This 
should be compared to the small cancellation conditions in~\cite[Section 
2.2]{GruberSisto:graphicalJ}: in both cases, infinitely many relations can be 
encoded in a single cube complex (a possibly infinite, disconnected graph in 
the graphical case, $Y$ here), and it's the systole of that complex that is used 
in the uniform small cancellation condition.
\end{defn}

\subsection{Disc diagrams}
The key objects in cubical small cancellation theory are \emph{disc diagrams}.

\begin{defn}[Disc diagram, boundary path]\label{defn:disc_diagram}
A \emph{disc diagram} is a compact, contractible $2$--dimensional cell complex $D$ equipped with a fixed embedding in 
$\reals^2$.  We regard $\mathbb S^2$ as $\reals^2\cup\{\infty\}$, so that $\mathbb S^2$ is obtained from $D$ by attaching a 
$2$--cell containing $\infty$.  The attaching map of this $2$--cell is the \emph{boundary path} $\boundary_pD$ of $D$.
\end{defn}

Given a cubical presentation $\langle X\mid\{Y_i\}_{i\in\mathcal I}\rangle$ and a closed path $P\to X$ that is nullhomotopic 
in $X^*$, van Kampen's lemma provides a disc diagram $(D,\boundary_pD)\to(X^*,X)$ whose boundary path $\boundary_pD=P$.  
Closed paths in $\cay(X^*)$ bound disc diagrams $(D,\boundary_pD)\to(\widetilde X^*,\cay(X^*))$. 

The $2$--cells of such a 
diagram are either squares (mapping to $2$--cubes of $X\subseteq X^*$) or $2$--simplices mapping to cones over the various 
$Y_i$.  Since $P$ avoids cone-points, the $2$--simplices of $D$ are partitioned into classes: for each vertex of $D$ mapping 
to a cone-point in $X^*$, the incident $2$--simplices are arranged cyclically around the vertex to form a subspace $C$ of 
$D$ which is equal to the cone on its boundary path (a path in $D$ mapping to $X$).  The subspace $C$ is a \emph{cone-cell}. 
 In practice, we ignore the subdivision of $C$ into $2$--simplices and regard $C$ as a $2$--cell of $D$.

The \emph{complexity} of $D$ is the pair $(c,s)$, where $c$ is the number of cone-cells and $s$ is the number of squares.  
Taking the complexity in lexicographic order, we always consider diagrams $(D,\boundary_pD)\to(X^*,X)$ which are 
\emph{minimal} in the sense that the complexity of $D$ is lexicographically minimal among all diagrams with boundary path 
$\boundary_pD$.  This implies that for each cone-cell $C$, the path $\boundary_pC\to D\to Y\to X$ is essential.

In general, the cone-cell $C$ might not be a subdiagram of $D$ (it's true that $C$ is a subspace of $D$, but $C$ itself might not be a disc diagram, in the situation where $\boundary C$ 
self-intersects).  However, under the small cancellation hypotheses used 
throughout this paper, Remark 3.10 (see also Theorem 4.1) of~\cite{Wise:QCH} ensures that in any minimal-complexity disc 
diagram, the cone-cells are actually homeomorphic to discs and, in particular, are subdiagrams.  We will assume this freely 
throughout.

\begin{rem}[Dual curves and hexagon moves]\label{defn:dual_curve_square_ladder}
Let $D\to X$ be a square diagram, i.e. a disc diagram whose 2--cells are squares.  A \emph{dual curve} in $D$ is a path which is the concatenation of midcubes of squares of 
$D$ that starts and ends on $\boundary_pD$, where a \emph{midcube} of a square $[-\frac12,\frac12]^2$ is obtained by 
restricting exactly one coordinate to $0$ and a midcube of a $1$--cube is its midpoint.  If $X$ is a nonpositively curved 
cube complex, then each dual curve maps to a hyperplane.  If $K$ is a dual curve in $D$, then the union of all closed cubes 
intersecting $K$ is its \emph{carrier} (in analogy to the definition for the carrier of a hyperplane).

More generally, if $D\to X^*$ is a disc diagram, then one can define dual curves as above, but any dual curve has its two 
ends either on $\boundary_pD$ or on the boundary path of a cone-cell of $X^*$.

A \emph{hexagon move} is a homotopy of the diagram $D\to X^*$ that fixes the boundary path and the cone-cells and their 
boundary paths, while modifying the square part of $D$.  Specifically, if $s_1,s_2,s_3$ are squares in $D$ arranged 
cyclically around a central vertex $v$, forming a hexagonal subdiagram $E$ of $D$, then $X$ must contain a $3$--cube $c$ 
with a corner at the image of $v$ formed by the images of $s_1,s_2,s_3$.  The (hexagonal) boundary path of $E$ maps to a 
combinatorial path in $c$, and we can replace $E$ by a diagram $E'$ formed from the other $3$ squares on the boundary of 
$c$; this yields a new diagram $D'\to X^*$, with the same boundary path as $D$, formed by replacing $E$ by $E'$.  This 
modification is a hexagon move.  Hexagon moves are used to reduce area (i.e. the number of squares) of diagrams and, hence, their complexity in various ways; detailed accounts can be found in 
e.g.~\cite{Wise:QCH,Wise:CBMS}.
\end{rem}

\begin{defn}[External cone-cell, internal cone-cell, internal path]\label{defn:external_internal}
The cone-cell $C$ of the disc diagram $D$ is \emph{external} if $\boundary_pC=QS$, where $Q$ is a non-trivial  subpath of 
$\boundary_pD$ (i.e. containing at least one 1--cell) and $S$ is an \emph{internal} path in the sense that no $1$--cell of 
$S$ lies on $\boundary_pD$.  The cone-cell $C$ is \emph{internal} if $\boundary_pC$ and $\boundary_pD$ have no common 
non-trivial subpath.  A cone-cell can be internal, external, or neither.
\end{defn}

\begin{rem}[Rectification and angling]\label{rem:rectification_and_angling}
Given a disc diagram $(D,\boundary_pD)\to(X^*,X)$, one can \emph{rectify} $D$, to produce a \emph{rectified diagram} $\bar 
D$, by removing some internal open $1$--cells, so that $D$ is subdivided into cone-cells, \emph{rectangles} which are 
obtained from square ladders by deleting the internal open $1$--cells, and complementary regions called \emph{shards}.  
See~\cite[Section 3.f]{Wise:QCH} for more discussion of rectified diagrams.  We will not require further details here.
 
After rectifying $D$, each corner in each of the resulting $2$--cells is assigned an \emph{angle} according to one of 
several possible schemes.  We follow the \emph{split-angling} defined in~\cite[Section 3.h]{Wise:QCH}.  Specifically, if $v$ 
is a vertex of the rectified diagram $D$, and $c$ is an edge in the link of $v$ (i.e. a corner of a $2$--cell at the vertex 
$v$), then we assign an angle $\angle(c)$ according to rules discussed in~\cite[Section 3.h]{Wise:QCH}.  Since we will just 
be using \emph{consequences} of these angle assignments, rather than the exact (long) definition, we refer the reader 
to~\cite[Section 3.h]{Wise:QCH}.  Suffice it to say that:
\begin{itemize}
     \item the angle $\angle(c)$ is always $\pi/2,\pi,2\pi/3,3\pi/4$, or $0$;
     \item the choice of angle is made in a way that guarantees nonpositive curvature at shards, in the sense 
described momentarily.
\end{itemize}
\end{rem}

\begin{rem}[Defects and curvature]\label{rem:curvature}
We now review some notions of curvature, from~\cite[Section 3.g]{Wise:QCH}, that we will require below.  
Given a 
rectified disc diagram $\bar D$, we assign an \emph{angle} $\angle(c)$ -- a real number -- to each corner $c$ of each 
$2$--cell (i.e. to each $1$--cell of each vertex-link).  In our setting, we always assume that this is done using the 
split-angling convention.

The \emph{defect} $\mathfrak d(c)$ at the corner $c$ is $\mathfrak 
d(c)=2\pi-\angle(c)$.  The \emph{curvature} $\kappa(v)$ at a vertex $v$ of $\bar D$ is 
$\kappa(v)=2\pi-\sum\angle(c)-\pi\chi(\link(v))$, where $\link(v)$ is the link of $v$, the notation $\chi$ means the Euler 
characteristic,  and the sum is taken over the 
$1$--cells $c$ of $\link(v)$.  The \emph{curvature} $\kappa(f)$ at a $2$--cell $f$ of $\bar D$ is 
$\kappa(f)=2\pi-\sum\mathfrak d(c)$, where $c$ varies over the corners of $f$. 
\end{rem}

We will need the following theorem, which follows immediately from the ``combinatorial Gauss-Bonnet Theorem'' as stated 
in~\cite[Theorem~4.6]{McCammondWise}; very similar statements can be found in~\cite{Brin,Gersten,BallmannBuyalo}.

\begin{thm}[Gauss-Bonnet for diagrams]\label{thm:CBGT}
Let $\bar D\to X^*$ be a rectified disc diagram.  Then $$\sum_f\kappa(f)+\sum_v\kappa(v)=2\pi,$$ where $f$ varies over the 
$2$--cells of $\bar D$ and $v$ varies over the $0$--cells of $\bar D$.
\end{thm}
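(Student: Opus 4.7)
The plan is to directly invoke the combinatorial Gauss-Bonnet theorem of McCammond-Wise (Theorem~4.6 of \cite{McCammondWise}) applied to the angled combinatorial $2$-complex $\bar D$. That theorem asserts $\sum_v \kappa(v) + \sum_f \kappa(f) = 2\pi \chi$ for any angled combinatorial $2$-complex, with curvatures at vertices and $2$-cells defined as in the excerpt. Since $\bar D$ is a disc diagram and hence topologically a disc, $\chi(\bar D) = 1$, and the right-hand side specializes to $2\pi$, which is precisely the claim.

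To see the identity intrinsically rather than as a black-box citation, I would expand the double sum $\sum_v\kappa(v) + \sum_f\kappa(f)$ using the given definitions and observe exact cancellation of the angle terms. Each corner $c$ of $\bar D$ is based at a unique vertex $v(c)$ and lies on the boundary of a unique $2$-cell $f(c)$. Hence $\angle(c)$ appears in the grand vertex-sum once with a negative sign (from the $-\sum\angle(c)$ inside $\kappa(v(c))$) and in the grand face-sum once with a positive sign (from the $\angle(c)$ living inside $-\mathfrak d(c)$ in $\kappa(f(c))$). Summed globally these contributions cancel, leaving a purely combinatorial expression in $V$, $E$, $F$, $\sum_v \chi(\link(v))$, and $\sum_f n_f$ (the corner count of each $2$-cell).

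Two standard topological counts finish the reduction. The link of an interior vertex of a disc diagram is a circle (Euler characteristic $0$) while the link of a boundary vertex is an arc (Euler characteristic $1$), so $\sum_v \chi(\link(v)) = V_\partial$, the number of boundary vertices. Dually, $\sum_f n_f$ counts corner-to-$2$-cell incidences; each interior $1$-cell contributes twice and each boundary $1$-cell once, yielding $\sum_f n_f = 2E - V_\partial$ after using $E_\partial = V_\partial$ (since $\partial\bar D$ is a cycle). Substituting these and Euler's formula $V - E + F = 1$ into the cancelled expression produces $2\pi$.

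The main obstacle is, strictly speaking, not conceptual but notational: one must verify that the normalizations of $\mathfrak d(c)$, $\kappa(v)$, and $\kappa(f)$ given above (determined by the split-angling convention of \cite{Wise:QCH}) are the ones under which McCammond-Wise's theorem is stated, so that their identity applies verbatim with no stray constants. Once this compatibility is confirmed, the remaining content is just Euler's formula and the link dichotomy above.
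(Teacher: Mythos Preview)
Your proposal is correct and matches the paper's approach exactly: the paper does not give an independent proof but simply states that the result follows immediately from the combinatorial Gauss--Bonnet theorem, Theorem~4.6 of~\cite{McCammondWise}. Your additional double-counting sketch is a bonus (and note that for fully general disc diagrams the boundary path need not be an embedded circle, so the link dichotomy is slightly more delicate than stated---but this is precisely why the $\chi(\link(v))$ term appears in the definition of $\kappa(v)$, and the McCammond--Wise citation absorbs this).
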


In the case where $X$ is a wedge of circles and each $Y_i$ is an immersed circle, i.e. $X^*$ is an ordinary presentation 
complex, disc diagrams are ordinary van Kampen diagrams, all rectangles are single edge, and rectifying has no effect on the 
diagram.  In this case, all $2$--cells of $D$ are cone-cells, the split-angling continues to ensure that the curvature at 
each vertex is nonpositive, and the condition on the curvature of shards is vacuous.

\begin{defn}[Generalized corner, spur, shell]\label{defn:positive_curvature}
A \emph{(positively-curved) shell} $C$ in the disc diagram $D$ is an external cone-cell whose curvature is positive; the 
boundary path of a shell has the form $QS$, where the \emph{outer path} $Q$ is a subpath of the boundary path of $D$, and 
the \emph{inner path} $S$ has no open $1$--cell on $\boundary_pD$. 

A \emph{spur} in $D$ is a vertex $v$ in $\boundary_pD$ so 
that the incoming and outgoing $1$--cells of $\boundary_pD$ map to the same $1$--cell of $X$, i.e. $v$ is the second vertex 
in a subpath of $\boundary_pD$ of the form $ee^{-1}$, where $e\to X$ is a $1$--cell.  

A \emph{generalized corner} is a path 
$ef$ in $D$, where each of $e,f$ is an edge, so that the dual curves emanating from $e,f$ cross inside a square $s$ of $D$, 
as shown in Figure~\ref{fig:corners}, and the subdiagram of $D$ bounded by the carriers of $e$ and $f$ is a square 
diagram.  See Definition 2.5 in~\cite{Wise:QCH}.
\end{defn}

\begin{figure}[h]
 \begin{overpic}[width=0.4\textwidth]{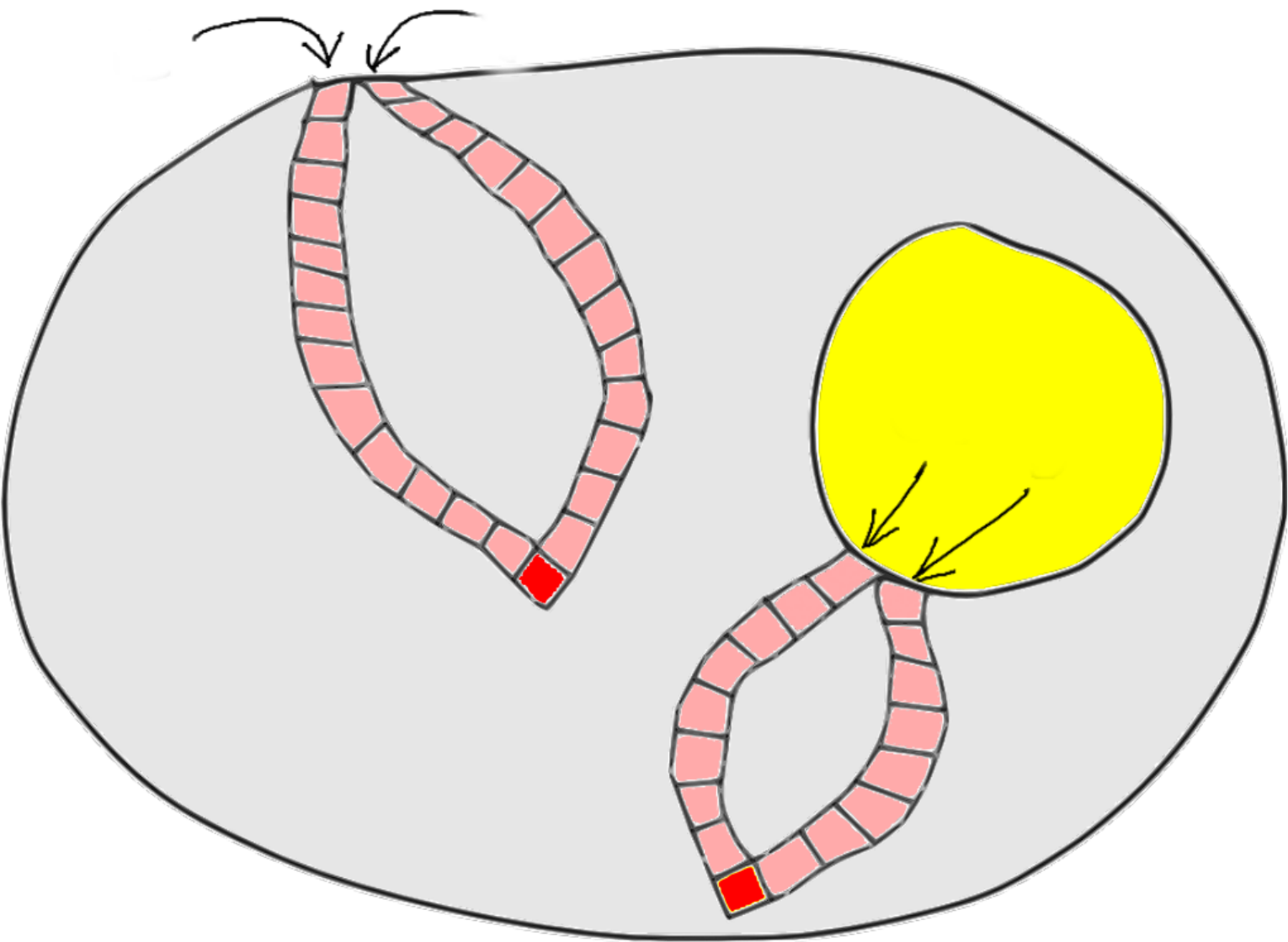}
      \put(11,69){$e$}
      \put(37,70){$f$}
      \put(72,38){$a$}
      \put(80,36){$b$}
 \end{overpic}

 \caption{$ef$ and $ab$ are generalized corners of the shaded squares.  $ef$ lies on the boundary of $D$, while $ab$ lies on 
the boundary of a cone-cell.}\label{fig:corners}
\end{figure}

\begin{rem}[Pushing generalised corners to the boundary by \textbf{\emph{shuffling}}]\label{defn:pushing_to_boundary}
If $ef$ is a generalized corner of a square $s$, and $ef$ lies along $\boundary_pD$, then we can perform a series of hexagon 
moves (see~\cite[Section~2.e]{Wise:QCH}) to homotop $D$, fixing its boundary path, so that there is a square with boundary 
path $efe'f'$, i.e. we can move squares to the boundary.  In~\cite{Wise:QCH}, this procedure is called ``shuffling''.  Later 
in the paper, we will occasionally invoke shuffling to modify disc diagrams $D$ so that a generalised corner $ef$ in 
$\boundary_pD$ is actually an \emph{exposed square}, i.e. there is a square $s$ in $D$ with boundary path containing $ef$.

If $ef$ is a generalized corner of a square $s$, and $ef$ lies on the boundary of some cone-cell $C$ mapping to a relator $Y$,  
then we can again shuffle until the square $s$ has two consecutive edges on $\boundary_pC$.  Convexity of $Y$ 
allows us to ``absorb'' the square $s$ into $C$, lowering complexity of $D$.    (In~\cite{Wise:QCH}, what we call a 
\emph{generalised corner} is called a \emph{cornsquare}, described in Definition 2.5 of \emph{loc. cit.})
\end{rem}

\begin{defn}[Padded ladder, ladder, cut-vertex]\label{defn:ladder}
A \emph{padded ladder} is a disc diagram $D\to X^*$ (or $\widetilde X^*$) with the following structure.  First, there is a 
sequence $C_1,\ldots,C_n$, where each $C_i$ is a cone-cell or vertex of $D$, so that $C_i,C_k$ lie in distinct components of 
$D-C_j$ whenever $i<j<k$.  The diagram $D$ is an alternating union of these vertices and cone-cells with a sequence of 
subdiagrams $R_0,\ldots,R_{n}$ called  \emph{pseudorectangles}, so that:
\begin{enumerate}
 \item The path $\boundary_pD$ is a concatenation $P_1P_2^{-1}$, where  $P_1,P_2$ start on $R_0$ and end on $R_{n}$.
 \item We have $P_1=\nu_0\rho_0\alpha_1\rho_1\cdots\alpha_n\rho_n$ and 
$P_2=\vartheta_0\gamma_1\vartheta_1\gamma_2\cdots\gamma_n\vartheta_n\mu_{n+1}$.
 \item We have $\boundary_pC_i=\mu_i\alpha_i\nu_i^{-1}\gamma_i^{-1}$.
 \item We have $\boundary_pR_i=\nu_i\rho_i\mu_{i+1}^{-1}\vartheta_i^{-1}$.
 \item Each $R_i$ is a \emph{square diagram}, i.e. contains no cone-cells.
 \item For each $i$, any dual curve in $R_i$ emanating from $\rho_i$ ends on $\vartheta_i$ and vice versa.  Hence, any dual 
curve emanating from $\nu_i$ ends on $\mu_{i+1}$ and vice versa.
 \item For each $i$, no two dual curves emanating from $\mu_i$ cross.
\end{enumerate}
See Figure~\ref{fig:better_ladder} for a picture illustrating the notation.  We say that $R_i$ is \emph{horizontally 
degenerate} if $|\mu_{i+1}|=|\nu_i|=0$ and \emph{vertically degenerate} if $|\rho_i|=|\vartheta_i|=0$.  When $C_i$ is a 
vertex, 
we call it a \emph{cut-vertex} of the padded ladder $D$.

If $R_0,R_{n}$ are vertically degenerate, then $D$ is a \emph{ladder}.  (A padded ladder is  a special case of what 
Jankiewicz calls a \emph{generalized ladder} in~\cite{Jankiewicz}; the definition of ladder  here is equivalent 
to that in~\cite[Definition 3.42]{Wise:QCH}). 
\end{defn}

We require the following three crucial facts, due to Wise.  These are tailored to our specific situation; 
the statements in~\cite{Wise:QCH} are more general.  

\begin{thm}[Ladder theorem]\label{thm:ladder_thm}
Let $\langle X\mid\{Y_i\}_{i\in\mathcal I}\rangle$ be a cubical $C'(\frac{1}{12})$  presentation.  Let $D\to X^*$ be a minimal disc diagram 
such that the corresponding rectified diagram has exactly two positively-curved cells along $\boundary_pD$.
Then $D$ is a ladder.
\end{thm}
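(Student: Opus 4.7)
The plan is to apply the combinatorial Gauss-Bonnet theorem (Theorem~\ref{thm:CBGT}) to the rectified diagram $\bar D$ and exploit the standard curvature estimates afforded by the cubical $C'(\frac{1}{12})$ condition under the split-angling scheme. Minimality of $D$, combined with the absorption and push-to-boundary moves of Remark~\ref{defn:pushing_to_boundary}, ensures that every interior cone-cell, every interior rectangle, and every interior $0$-cell of $\bar D$ carries non-positive curvature; hence by Gauss-Bonnet, the total curvature $2\pi$ is supported on cells meeting $\boundary_p D$, and by hypothesis exactly two such cells are positively curved. Call them $C_1$ and $C_n$; each is a spur or a shell in the sense of Definition~\ref{defn:positive_curvature}, since these are the only sources of positive boundary curvature once generalized corners have been pushed in.

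The proof then proceeds by induction on the lexicographic complexity $(c,s)$ of $D$. The base cases are when $D$ is a single vertex, a single cone-cell, or a single square padded diagram, all of which are vacuously ladders (at most one cone-cell, with vertically degenerate end pseudorectangles). For the inductive step, focus on $C_1$. If $C_1$ is a spur, excise it to obtain a diagram $D'$ of strictly smaller complexity whose rectified diagram still has exactly two positively-curved boundary cells (a new spur or shell inheriting the curvature near the old spur), and apply induction. If instead $C_1$ is a shell with $\boundary_p C_1 = Q_1 S_1$ and $Q_1 \subseteq \boundary_p D$, cut $D$ open along the internal path $S_1$ to produce $D'$; the diagram $D'$ inherits minimality, and its rectified diagram again has exactly two positively-curved boundary cells, one near the seam $S_1$ and the other being $C_n$. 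By induction $D'$ is a ladder $C_2,\ldots,C_n$ joined by pseudorectangles $R_1,\ldots,R_n$, and re-gluing $C_1$ along $S_1$ inserts a new pseudorectangle $R_0$ between $C_1$ and $C_2$, possibly vertically degenerate as required at the end of a ladder.

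The main obstacle is verifying that the strip $R_0$ produced by re-gluing really is a pseudorectangle in the sense of Definition~\ref{defn:ladder}, in particular that its dual curves satisfy the non-crossing and matching properties (6) and (7). The technical crux is ruling out crossings among the dual curves in $R_0$: any such crossing would, via the hexagon moves of Remark~\ref{defn:dual_curve_square_ladder}, produce a generalized corner along $\boundary_p D$ or along the boundary of a cone-cell that could either be absorbed into an adjacent cone-cell, contradicting minimality, or would be registered as a third positively-curved boundary cell, contradicting the hypothesis on $\bar D$. Executing this hexagon-move argument requires tracking dual curves through the rectified structure and invoking the $C'(\frac{1}{12})$ bound to control the lengths of the wall-pieces and cone-pieces bounding the relevant carriers, so that the absorptions respect the convexity of each $\widetilde Y_i$; this combinatorial bookkeeping is where most of the real work lies.
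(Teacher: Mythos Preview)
The paper does not contain a proof of Theorem~\ref{thm:ladder_thm}. Immediately before the statement, the authors write: ``We require the following three crucial facts, due to Wise~\cite{Wise:QCH}. These are tailored to our specific situation; the statements in~\cite{Wise:QCH} are more general.'' The ladder theorem is imported as a black box from~\cite{Wise:QCH}, so there is no in-paper argument against which to compare your proposal.

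That said, a brief comment on your sketch. The overall shape --- Gauss--Bonnet forces all positive curvature to the two given boundary features, then peel off an extremal shell/spur and induct --- is the right intuition, and is close in spirit to how Wise argues. The genuine gap is in the inductive step. When you excise $C_1$ and expose the inner path $S_1$ as new boundary, you assert that the resulting diagram $D'$ is again minimal and that its rectified diagram has \emph{exactly} two positively-curved boundary cells, ``one near the seam $S_1$ and the other being $C_n$''. Neither claim is automatic: minimality of $D$ for its boundary path does not immediately transfer to $D'$ for the new boundary path $S_1\cup(\partial_pD\setminus Q_1)$, and along the newly-exposed path $S_1$ several cells that were previously internal (or had nonpositive curvature because of defect along $S_1$) can acquire positive curvature simultaneously. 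Controlling this is precisely where the $C'(\frac{1}{12})$ hypothesis and the detailed split-angling bookkeeping in~\cite{Wise:QCH} are used; your proposal gestures at this (``this combinatorial bookkeeping is where most of the real work lies'') but does not carry it out. Similarly, the claim that the re-glued strip $R_0$ is a genuine pseudorectangle satisfying conditions (6) and (7) of Definition~\ref{defn:ladder} needs an argument, not just the observation that a failure would produce an absorbable corner.
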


\begin{proof}
This follows by combining~\cite[Theorem 3.43]{Wise:QCH} with~\cite[Theorem 3.31]{Wise:QCH}.  See also~\cite[Examples 
3.s.(3), Theorem 3.32]{Wise:QCH}.
\end{proof}

From Theorem 3.46 of~\cite{Wise:QCH}, we also get:

\begin{thm}[Greendlinger's lemma/diagram trichotomy]\label{thm:Greendlinger}
Under the hypotheses of Theorem~\ref{thm:ladder_thm}, if $D\to X^*$ is a minimal disc diagram, then either $D$ consists of a 
single vertex or cone-cell, or $D$ is a ladder, or $D$ contains at least three \emph{features of positive 
curvature} (i.e. shells, spurs, or generalized corners) along 
$\boundary_pD$.
\end{thm}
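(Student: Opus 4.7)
The plan is to deduce Greendlinger's lemma from the combinatorial Gauss--Bonnet theorem (Theorem~\ref{thm:CBGT}) by classifying which cells and vertices of a rectified minimal disc diagram can contribute positive curvature. First I would rectify $D$ to obtain $\bar D$, assign angles via the split-angling of Remark~\ref{rem:rectification_and_angling}, and apply Theorem~\ref{thm:CBGT}, so that the total positive curvature of $\bar D$ equals $2\pi$. The core of the argument is then a dichotomy: each $2$-cell and $0$-cell of $\bar D$ is either a shell, spur, or generalized corner along $\boundary_p D$, or it has non-positive curvature.

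For interior squares and shards, the curvature bound is immediate from the nonpositive curvature of $X$ (which prevents short cycles in vertex links), and the same forces interior vertices to have non-positive curvature. For an interior cone-cell $C$ mapping to some $Y_i$, I would argue as follows: minimality of $D$ forces $\boundary_p C$ to be essential in $Y_i$, so it has combinatorial length at least $\|Y_i\|$; on the other hand, each arc of $\boundary_p C$ between consecutive generalized corners factors through a cone-piece or a wall-piece and therefore has length strictly less than $\tfrac{1}{12}\|Y_i\|$ by the $C'(\tfrac{1}{12})$ hypothesis. Consequently $\boundary_p C$ carries more than $12$ such corners, each contributing enough defect under the split-angling from~\cite{Wise:QCH} to force $\kappa(C)=2\pi-\sum\mathfrak d(c)\leq 0$. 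An analogous piece-counting estimate, applied to the part of $\boundary_p C$ or of a square's boundary lying on $\boundary_p D$, rules out positively-curved boundary features that fail to be spurs, shells, or generalized corners.

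Once only spurs, shells, and generalized corners can account for the $2\pi$ coming from Gauss--Bonnet, I would conclude by a case analysis on the number $n$ of positively-curved boundary features. If $n=0$, Gauss--Bonnet is violated and the case cannot occur; if $n=1$, the sole feature must absorb the full $2\pi$, which happens only when $D$ is already a single vertex or a single cone-cell; if $n=2$, Theorem~\ref{thm:ladder_thm} directly yields that $D$ is a ladder; and if $n\geq 3$, we are in the third alternative of the statement. The main obstacle will be establishing uniformly that interior cone-cells have non-positive curvature: this requires a careful bookkeeping of defects at the generalized corners along $\boundary_p C$, combined with the quantitative bound on piece-length from $C'(\tfrac{1}{12})$, which is precisely the role that Wise's split-angling machinery is designed to play.
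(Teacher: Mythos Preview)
The paper does not give its own proof of this theorem: Theorem~\ref{thm:Greendlinger} is stated as one of three ``crucial facts, due to Wise~\cite{Wise:QCH}'' and is simply cited, not proved. So there is no in-paper argument to compare against.

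That said, your outline is precisely the standard Gauss--Bonnet approach that Wise uses in~\cite[Section~3]{Wise:QCH}, and in broad shape it is correct: rectify, assign the split-angling, show that every non-boundary feature has nonpositive curvature, bound the curvature of each positively-curved boundary feature, and then invoke Theorem~\ref{thm:CBGT}. One point that needs more care than you indicate is the passage ``each arc of $\boundary_p C$ between consecutive generalized corners factors through a cone-piece or a wall-piece'': making this precise for an internal cone-cell of a \emph{rectified} diagram requires knowing how rectangles and shards abut $C$, and the bookkeeping of which defects count (e.g.\ the $\frac{\pi}{4}$ contributions at transitions) is exactly where Wise's argument in~\cite{Wise:QCH} does real work. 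Your sketch also glosses over why a single positively-curved feature with curvature $<2\pi$ (a spur contributes $\pi$, an exposed square $\frac{\pi}{2}$) is impossible when $n=1$; the honest statement is that Gauss--Bonnet forces total curvature $2\pi$, so either the feature is a cone-cell of curvature exactly $2\pi$ (whence $D$ is a single cone-cell) or the diagram is degenerate (a single vertex). These are fillable gaps, but they are where the actual content of Wise's proof lies.
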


The next theorem follows directly from Lemma~3.70 of~\cite{Wise:QCH}.   In fact, it holds under weaker small-cancellation 
conditions (see~\cite[Lemma 3.70]{Wise:QCH} or~\cite{Jankiewicz}), but we will not require this.

\begin{thm}[Short inner paths]\label{thm:short_inner_paths}
Let $\langle X\mid\{Y_i\}_{i\in\mathcal I}\rangle$ be a cubical $C'(\frac{1}{14})$  presentation.  Let $D\to X^*$ be a disc diagram and let 
$C$ be a shell in $D$ with boundary path $QS$, with $Q$ a maximal common subpath of $\boundary_pC$ and $\boundary_pD$, and 
$S$ an internal path.  Suppose that $QS$ is essential in the relator $Y$ to which $C$ maps, and that $S$ is of minimal 
length among all paths $S'\to Y$ that are square-homotopic rel endpoints in $Y$ to $S$.  Finally, suppose that the total 
curvature 
contribution from $C$ is positive.  Then $|S|<|Q|$.
\end{thm}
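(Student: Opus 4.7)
The plan is to suppose for contradiction that $|S|\geqslant|Q|$ and derive a contradiction from the hypothesis $\kappa(C)<\pi$. I would first decompose the inner path $S$ according to its adjacencies inside $D$. Each maximal subpath of $S$ that is shared with the boundary of a cone-cell $C'$ distinct from $C$ is a cone-piece of $Y$ (the relator to which $C$ maps), and each maximal subpath shared with the carrier of a dual curve crossing $S$ transversely is a wall-piece of $Y$. This yields a decomposition $S=P_1P_2\cdots P_k$ into pieces, and by $C'(\tfrac{1}{8})$ each $|P_i|<\|Y\|/8$. Minimality of $|S|$ among paths homotopic rel endpoints to $S$ in $Y$, combined with convexity of the elevations $\widetilde Y\hookrightarrow\widetilde X$, will rule out redundancy in this decomposition: no two consecutive $P_i,P_{i+1}$ can be merged into a single longer piece, for otherwise a subpath of $\widetilde Y$ could be used to strictly shorten $S$.

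Next I would translate the curvature bound into an upper bound on the number of pieces. In the split-angling scheme recalled in Remark~\ref{rem:rectification_and_angling}, every junction between consecutive pieces along $\boundary_pC$, and every corner where $S$ meets $Q$, carries a specific defect $\mathfrak d(c)=2\pi-\angle(c)$ that is bounded below by a concrete positive constant depending only on the scheme. Summing these defects and applying $\kappa(C)=2\pi-\sum\mathfrak d(c)<\pi$ produces an inequality of the form $k\leqslant k_0$ for an explicit constant $k_0$; with the refinement due to Jankiewicz~\cite{Jankiewicz} I expect to extract $k_0=4$.

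To close the argument, observe that essentiality of $QS$ in $Y$ forces $|Q|+|S|\geqslant\|Y\|$, so the assumption $|S|\geqslant|Q|$ yields $|S|\geqslant\|Y\|/2$. Combined with $|S|<k\|Y\|/8$ this forces $k>4$, contradicting the bound from the previous paragraph. The main technical obstacle will be extracting the sharp value $k_0=4$: one has to distinguish several flavors of inner corners (cone/cone, cone/wall, wall/wall) and assign each a defect under the split angling with care, ensuring that wall-pieces bracketing a generalized corner do not artificially inflate the count. The sharpening of Wise's original constant $\tfrac{1}{12}$ to $\tfrac{1}{8}$ presumably rests on Jankiewicz's refined treatment of generalized corners, allowing certain pairs of consecutive pieces that meet at such a corner to be absorbed into $C$ via hexagon moves (cf. Remark~\ref{defn:pushing_to_boundary}), thereby reducing the effective piece count by one in the worst-case configurations.
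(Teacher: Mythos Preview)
The paper does not actually prove this theorem: it is quoted from the literature, with the $C'(\tfrac{1}{12})$ version being Lemma~3.46 of~\cite{Wise:QCH} and the improvement to $C'(\tfrac{1}{8})$ attributed to Jankiewicz~\cite{Jankiewicz}. So there is no in-paper proof to compare against; your proposal is an attempt to reconstruct the cited argument.

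Your overall architecture---decompose $S$ into pieces, bound the number $k$ of pieces via the curvature of $C$, and play this against the lower bound $|S|\geqslant\|Y\|/2$ coming from $|S|\geqslant|Q|$ and essentiality of $QS$---is the right one and is indeed how the argument in~\cite{Wise:QCH,Jankiewicz} proceeds. However, there is a directional error in your middle step. You write that ``applying $\kappa(C)=2\pi-\sum\mathfrak d(c)<\pi$ produces an inequality of the form $k\leqslant k_0$.'' But $\kappa(C)<\pi$ is equivalent to $\sum\mathfrak d(c)>\pi$, which is a \emph{lower} bound on the total defect and hence (if anything) a lower bound on the number of piece-transitions, not an upper bound. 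The upper bound on $k$ comes instead from the hypothesis that $C$ is a shell, i.e.\ $\kappa(C)>0$, which gives $\sum\mathfrak d(c)<2\pi$; after subtracting the two endpoint defects of $\pi/2$ each, the interior defects along $S$ sum to strictly less than $\pi$, and \emph{this} is what caps the number of transitions. Your closing arithmetic then works as written.

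The second, more substantive, gap is the assertion that every transition between consecutive pieces carries a uniform positive defect under the split-angling. In the cubical setting this is false in general: rectangle corners and certain wall/wall transitions carry zero defect, which is precisely why the case $\kappa(C)=\pi$ (equivalently, zero total interior defect) is handled separately in the proof of Theorem~\ref{thm:strebel_cubical_small_can} via a direct invocation of Lemma~3.46 of~\cite{Wise:QCH}. The hypothesis $\kappa(C)<\pi$ rules out exactly that degenerate situation, but you still need the detailed corner-by-corner analysis (cone/cone, cone/wall, wall/wall, generalized corners) to show that the \emph{nonzero}-defect transitions are numerous enough to force $k\leqslant4$. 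You flag this as ``the main technical obstacle,'' which is accurate, but it is the heart of the matter rather than a detail to be filled in; getting $k_0=4$ rather than $k_0=6$ is exactly the content of Jankiewicz's refinement.
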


\subsection{The classification theorem}\label{subsec:classification}
Using the above fundamental results, we produce a cubical small cancellation version of Strebel's classification of 
triangles in classical small cancellation groups~\cite{Strebel}.  An \emph{exposed square} in a disc diagram $D$ is a square 
with two consecutive edges on $\boundary_pD$.  A \emph{tripod} 
is a triangle diagram with no cone-cells or squares. We can now state our classification of triangles in cubical small cancellation groups:

\begin{thm}[Classification of triangles in cubical $C'(\frac{1}{144})$ groups]\label{thm:strebel_cubical_small_can}
 Let $X$ be a connected nonpositively curved cube complex, let $\mathcal I$ be a 
(possibly infinite) index set and let $\{Y_i\to X\}_{i\in\mathcal I}$ be a set of local isometries of connected complexes.

Let $\langle X\mid\{Y_i\}_{i\in\mathcal I}\rangle$ be a cubical presentation satisfying the $C'(\frac{1}{144})$ condition.  
Let $\alpha,\beta,\gamma\to\cay(X^*)$ be combinatorial geodesics so that $\alpha\beta\gamma$ is a geodesic triangle.  Then 
there exists a disc diagram $(D,\boundary_pD)\to(X^*,X)$ with boundary path $\alpha'\beta'\gamma'\to\cay(X^*)\to X^*$ lying 
in $X$, so that the following hold.  First, $\alpha\to X$ and $\alpha'\to X$ co-bound a bigon $B\to X$ (i.e. they are 
square-homotopic) and the same is true of $\beta,\beta'$ and $\gamma,\gamma'$.  Second, $D$ is of one of the following types.
\begin{enumerate}
 \item (\textbf{$3$--shell generic}:) $D$ has exactly three external cone-cells, $C_1,C_2,C_3$, respectively containing the 
points $\alpha'\cap\beta',\beta'\cap\gamma',\gamma'\cap\alpha'$.  There is exactly one cone-cell $M$ that intersects 
$\alpha',\beta'$, and $\gamma'$.  Moreover, $D$ is the union of three ladders, $L_1,L_2,L_3$ so that $L_i\cap L_j=M$ for all 
$i,j$.  
In particular, every cone-cell except $M$ intersects exactly two of the geodesics $\alpha,\beta,\gamma$.\label{item:3g}
 \item (\textbf{$3$--shell tripod}:) $D$ has exactly three external cone-cells, $C_1,C_2,C_3$, respectively containing the 
points $\alpha'\cap\beta',\beta'\cap\gamma',\gamma'\cap\alpha'$.  Every other cone-cell intersects exactly two of the 
geodesics 
$\alpha',\beta',\gamma'$. In this case, $D$ is the union of $3$ (possibly padded) ladders $L_1,L_2,L_3$ and a tripod 
triangle 
$P_1P_2P_3\to X$ so that $L_i$ intersects the other two ladders in the path $P_i$.\label{item:3t}
 \item (\textbf{$2$--shell generic}:)  Same as $3$--shell generic, except exactly one of $C_1,C_2,C_3$ is a spur or exposed 
square instead of a cone-cell.\label{item:2g}
 \item (\textbf{$2$--shell tripod}:) Same as $3$-shell tripod, except exactly one of $C_1,C_2,C_3$ is a spur or exposed 
square instead of a cone-cell.\label{item:2t}
 \item (\textbf{$1$--shell generic}:) Same as $2$--shell generic, except exactly two of $C_1,C_2,C_3$ are spurs or exposed 
squares.\label{item:1g}
 \item (\textbf{$1$--shell tripod}:) Same as $2$--shell tripod, except exactly two of $C_1,C_2,C_3$ are spurs or exposed 
squares.\label{item:1t}
 \item (\textbf{No-shell generic}:) Same as $3$--shell generic, except $C_1,C_2,C_3$ are all spurs or exposed 
squares.\label{item:0g}
 \item (\textbf{No-shell tripod}:) Same as $3$--shell tripod except $C_1,C_2,C_3$ are all spurs or exposed squares.  This 
includes the case where $\alpha'\beta'\gamma'$ is nullhomotopic in $X$, in which case $D$ is a tripod.\label{item:0t}
 \item (\textbf{Degenerate triangle}:) $D$ is a single vertex or cone-cell, or $D$ is a ladder.\label{item:ladder}
\end{enumerate}
The diagram $D\to X^*$ is a \emph{standard diagram} for the triangle $\alpha\beta\gamma$.  The eight non-degenerate cases 
are shown in Figure~\ref{fig:classification}.
\end{thm}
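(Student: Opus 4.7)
The strategy is to adapt Strebel's classification of triangles in classical $C'(\frac{1}{6})$ groups, replacing his combinatorial arguments with the cubical machinery above. Fix a minimal-complexity disc diagram $(D_0,\partial_pD_0)\to(X^*,X)$ with boundary path $\alpha\beta\gamma$, lifted from $\cay(X^*)$ via the covering $\cay(X^*)\to X$. By Greendlinger's lemma (Theorem~\ref{thm:Greendlinger}), $D_0$ is either a single vertex or cone-cell, or a ladder, or contains at least three positively curved boundary features (shells, spurs, or generalized corners). The first two alternatives force at least one of $\alpha,\beta,\gamma$ to be trivial and place us in case~\ref{item:ladder}; assume from now on that $D_0$ falls into the third alternative.

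The first key step is to show that every positively curved boundary feature can be moved to a corner of the triangle. A shell $C$ with outer path $Q$ lying on the interior of a single side $\alpha$ would admit, by the short inner paths theorem (Theorem~\ref{thm:short_inner_paths}), an inner path $S$ with $|S|<|Q|$; replacing $Q$ by $S$ (lifted into $\cay(X^*)$ using convexity of the elevation of the relator onto which $C$ maps) then yields a strictly shorter path with the same endpoints, contradicting geodicity of $\alpha$. A spur on the interior of a side is an immediate backtrack, also contradicting geodicity. A generalized corner $ef$ whose associated exposed square $s$ lies on the interior of a side can be eliminated by square-shuffling $s$ across $\partial_pD_0$ or absorbing it into an adjacent cone-cell as in Remark~\ref{defn:pushing_to_boundary}. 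Iterating, possibly combined with hexagon moves, produces a new minimal diagram $D$ whose boundary is $\alpha'\beta'\gamma'$ with each primed side square-homotopic in $X$ to the corresponding original, and with all positively curved features located at the three corners. By Theorem~\ref{thm:Greendlinger} there remain at least three such features, and by the reductions just performed there are at most three, so exactly one sits at each corner, and each is a shell, spur, or exposed square --- producing the four shell-count patterns.

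For each pattern, the ladder theorem (Theorem~\ref{thm:ladder_thm}) applied to sub-diagrams of $D$ bounded by two consecutive corner features and a single side identifies each such sub-diagram with a (possibly padded) ladder $L_i$. The remaining central region $M$ of $D$, bounded by the three inner boundaries of $L_1,L_2,L_3$, carries no positive boundary curvature (all was used at the corners), so a Gauss-Bonnet computation (Theorem~\ref{thm:CBGT}) forces $M$ to be either a single cone-cell shared by all three ladders (the \emph{generic} subcases) or a tripod of paths $P_1P_2P_3\to X$ along which the ladders are pairwise glued (the \emph{tripod} subcases). These two possibilities split each of the four shell-count patterns into two, yielding the eight non-degenerate cases~\ref{item:3g}--\ref{item:0t}. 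The main obstacle is this last step: ruling out a richer central region requires spending the $C'(\frac{1}{144})$ condition to its full strength, since the $2\pi$ Gauss-Bonnet budget must be apportioned delicately among the three corner features, the internal angles of the ladders, and any structure in $M$, and the margin provided by $\alpha=\frac{1}{144}$ is precisely what rules out additional shells or internal complications in $M$.
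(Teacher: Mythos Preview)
Your overall shape is right—reduce to a minimal diagram, push positive curvature to the three corners, and then argue that what remains is three ladders glued along a single cone-cell or a tripod—but the order in which you try to extract the ladders is backwards, and this hides the step that actually consumes the $C'(\tfrac{1}{144})$ hypothesis.

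You attempt to apply the ladder theorem to ``sub-diagrams of $D$ bounded by two consecutive corner features and a single side'' before you know anything about the interior of $D$. But no such sub-diagram is well-defined at that stage: without first controlling the cone-cells, there is no reason a corner feature and an adjacent side should cut off a simply-connected region with exactly two positively curved boundary cells, which is what Theorem~\ref{thm:ladder_thm} requires. The paper proceeds in the opposite order. After locating the three corner features, one uses Gauss--Bonnet directly on all of $D$: under $C'(\tfrac{1}{144})$ each internal cone-cell contributes curvature $<-4\pi$, while the three corner features contribute at most $6\pi$ in total, so internal cone-cells are impossible. A parallel defect count rules out \emph{shortly-external} cone-cells (those whose outer path lies on only one side). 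One then argues, via absorption and minimality, that every remaining cone-cell has \emph{connected} intersection with each of $\alpha,\beta,\gamma$ and meets at least two of them. Only at this point does one split into cases according to whether some cone-cell meets all three sides (the generic case, with a unique median-cell $M$) or not (the tripod case), and only then is the ladder theorem applied—to the three complementary components cut off by $M$ in the generic case, and to the pieces surrounding a purely square central hexagon in the tripod case.

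Two smaller points. Your short-inner-paths argument for ruling out a shell along a single side is incomplete: Theorem~\ref{thm:short_inner_paths} needs $\kappa(C)<\pi$, so when the defects along the inner path sum to zero one must instead invoke the piece-count bound (Lemma~3.46 of~\cite{Wise:QCH}) and then $C'(\tfrac{1}{144})$. And your final Gauss--Bonnet claim that the central region $M$ ``must be a single cone-cell or a tripod'' is doing all the work without an argument; in the paper this is not a residual curvature computation but the outcome of the cone-cell analysis just described, followed (in the tripod case) by a dual-curve analysis of the remaining square hexagon.
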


\begin{rem}[Media and small cancellation parameters]\label{rem:media_and_constants}
The standard diagram depends only on the endpoints of the geodesics $\alpha,\beta,\gamma$.  Just as it is usual in 
CAT(0) cube complexes to homotop geodesics, fixing their endpoints, in order to minimize the area of diagrams, here we are 
not married to particular geodesics, just to square-homotopy classes relative to their endpoints.  In particular, if $\alpha\beta\gamma$ 
bounds a disc diagram in $X$, then $D$ is a tripod.  When $\mathcal I=\emptyset$, 
Theorem~\ref{thm:strebel_cubical_small_can} just says: any three $0$--cubes in a CAT(0) cube complex determine a geodesic 
tripod.

More generally, as illustrated by Figure~\ref{fig:classification}, Theorem~\ref{thm:strebel_cubical_small_can} should be 
interpreted as saying that the vertices of the triangle have a ``median'' which is either a vertex or a cone-cell, and there 
is a geodesic triangle connecting the given three points, each of whose sides passes within a wall-piece of the ``median''.  
In other words, given $0$-cells $a,b,c\in\cay(X^*)$, the ``convex hulls'' of the three possible pairs mutually coarsely 
intersect.

At the other extreme, when $X$ is a wedge of circles and each $Y_i$ is an immersed circle, Theorem~\ref{thm:strebel_cubical_small_can} 
generalizes a weak version of Strebel's classification of triangles~\cite[Theorem~43]{Strebel}; specifically, 
Theorem~\ref{thm:strebel_cubical_small_can} provides the same classification as Strebel's result, but, because the proof 
must work in the more general context of cubical presentations, we require stronger metric small cancellation conditions 
than Strebel needs in the classical setting. 

\end{rem}

\begin{proof}[Proof of Theorem~\ref{thm:strebel_cubical_small_can}]
We consider a disc diagram bounded by the geodesic triangle.  The proof is then essentially a meticulous  application 
of Theorem~\ref{thm:Greendlinger}, Theorem~\ref{thm:ladder_thm}, and Theorem~\ref{thm:short_inner_paths}, following and 
followed by appropriately chosen square homotopies.
The main points are:
\begin{itemize}
     \item A curvature computation to eliminate the possibility of internal cone-cells (in the Strebel classification in the 
case of classical $C'(\frac{1}{6})$ condition, one of the primary features is that the disc diagrams do not have internal 
cells).  We also rule out some other types of cone-cells lying along the boundary of the diagram. This computation, which 
is a slightly modified version of a computation in the proof of Theorem~3.29 of~\cite{Wise:QCH} (same computation, 
except with different numbers reflecting our strong small-cancellation conditions), is why we need the $C'(\frac{1}{144})$ 
condition.

  \item Applications of Theorem~\ref{thm:short_inner_paths} to rule out positively-curved shells along the boundary of a 
disc diagram bounded by a geodesic triangle.

\item An application of Theorem~\ref{thm:ladder_thm} to decompose our diagram as the union of at most three ladders meeting 
along a central cell.
\end{itemize}

\textbf{Choosing $\alpha',\beta',\gamma'$ and constructing $D$:}  Given a geodesic $P\to\cay(X^*)$, let $[P]$ be the set of 
geodesics $Q$ that have the same endpoints as $P$ and the additional property that $PQ^{-1}$ bounds a disc diagram 
containing no cone-cells, i.e. there is a disc diagram $E\to X$ whose boundary path is $PQ^{-1}\to\cay(X^*)\to X^*$.

Choose a disc diagram $D\to\widetilde X^*$ so that $\boundary_pD=\alpha'\beta'\gamma'$, where 
$\alpha'\in[\alpha],\beta'\in[\beta],\gamma'\in[\gamma]$.  Choose $D$ so that the complexity is minimal among all disc 
diagrams with boundary path of the preceding form.

Abusing notation slightly, we now temporarily regard $D$ as the \emph{rectified diagram} from 
Remark~\ref{rem:rectification_and_angling}.  This means that certain square ladders are regarded as single (rectangular) 
$2$--cells, 
cone-cells are regarded as single 2--cells, and the remaining parts of the diagram are $2$--cells (formed by ignoring 
non-boundary $1$--cells in certain square subdiagrams) called \emph{shards}.  Angles are assigned to corners according to 
the \emph{split-angling} discussed above.
%

\textbf{Applying the Greendlinger lemma:}  By Theorem~\ref{thm:Greendlinger}, either $D$ is a single vertex, a single cone-cell, a ladder, or $D$ has at least $3$ features of positive curvature -- 
spurs, 
generalised corners, or shells -- along $\boundary_pD$.  In either of the first three cases, assertion~\eqref{item:ladder} holds.  So, assume that $D$ has at least $3$ features of positive 
curvature along the boundary, \emph{each of which} is a shell, a spur, or a generalized corner.

We may assume that all generalised corners along $\boundary_pD$ are actually squares with corners on $\boundary_pD$.  
Indeed, let $s$ be a square in $D$ with a generalised corner on $\boundary_pD$, so the dual curves $K_1,K_2$ 
intersecting $s$ end at consecutive $1$--cubes $e_1,e_2$ on $\boundary_pD$.  By shuffling, we modify 
$D$ -- without changing $\boundary_pD$ or increasing complexity -- so that $s$ lies along the boundary, i.e. $e_1,e_2$ 
are consecutive $1$--cubes of $s$.

\textbf{Square homotopies:}  Let $s$ be a square of $D$ so that $\boundary_ps$ 
and $\boundary_pD$ have a common subpath $e_1e_2$.  For $i\in\{1,2\}$, let $e'_i$ be the $1$--cube of $s$ opposite $e_i$.  
If $e_1e_2$ is a subpath of one of the three constituent geodesics of $\boundary_pD$ (say, $\alpha'$), then we can modify 
$\alpha'$ in its square-homotopy class by replacing $e_1e_2$ by $e_2'e_1'$, resulting in a new diagram with the same number 
of cone-cells and fewer squares.  This contradicts our minimality assumption. 

Hence we can assume the following: any square $s$ with a corner on the boundary 
lies at the transition from $\alpha'$ to $\beta'$, or $\beta'$ to $\gamma'$, or $\gamma'$ to $\alpha'$.  
Also, since $\alpha', \beta', \gamma'$ are geodesic, a spur of the form $ee^{-1}$ cannot occur along any of $\alpha', 
\beta',\gamma'$ so the only spurs consist of overlaps between $\alpha',\beta'$ or $\beta',\gamma'$ or $\gamma',\alpha'$.

We now rule out cone-cells in $D$ of a few types. First, we rule out positively curved shells with outer path on one of our 
three geodesics.  This puts us in a position where we have at most $3$ features of positive curvature (we already removed 
generalised corners along our geodesics, we will shortly remove shells, and spurs are impossible, so positive curvature can 
only occur at transitions between successive geodesics).  Using this, we rule out internal cone-cells and (nonpositively 
curved) shells.  From this, we deduce that each cone-cell has nonempty connected intersection with each of 
$\alpha',\beta',\gamma'$.  This is summarised in Figure~\ref{fig:illegal_cells}.

\begin{figure}[h]
     \begin{overpic}[width=0.35\textwidth]{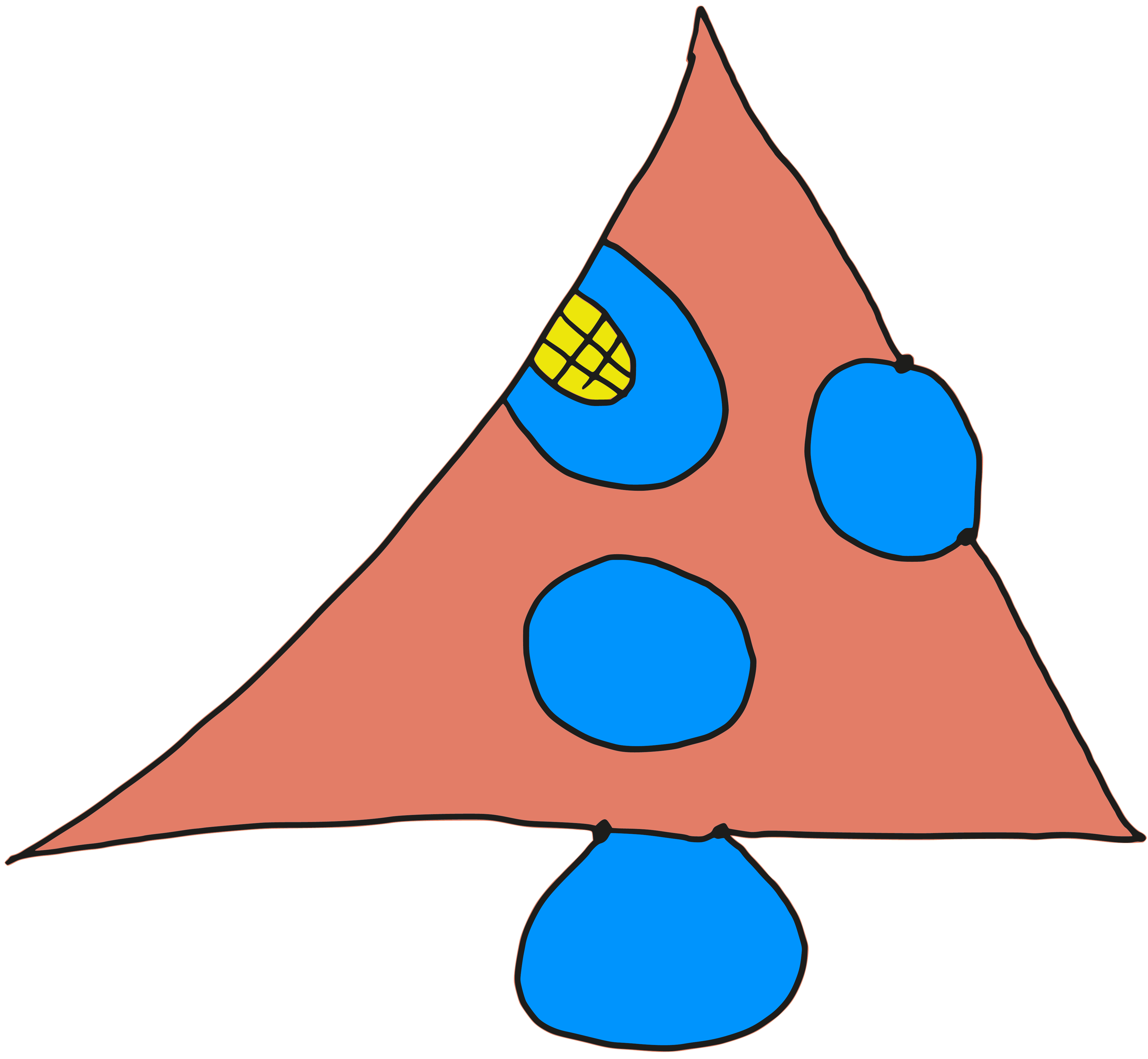}
          \put(53,8){$C_1$}
          \put(53,33){$C_2$}
          \put(73,53){$C_3$}
          \put(52,52){$C_4$}
     \end{overpic}
\caption{The positively-curved shell $C_1$ is ruled out using Theorem~\ref{thm:short_inner_paths}.  
The internal cone-cell $C_2$ and the shell $C_3$ are then ruled out using our small-cancellation 
conditions and Theorem~\ref{thm:CBGT}.  A cone-cell like $C_4$, whose intersection with $\alpha'$ 
is nonempty and disconnected, can then be ruled out: an innermost such $C_4$ would have to bound a 
square subdiagram with the subtended part of $\alpha'$, because otherwise we would get one of the 
other $3$ types of illegal cone-cell.  The square subdiagram could then be absorbed into $C_4$, 
lowering the complexity.}\label{fig:illegal_cells}
\end{figure}

\begin{claim}[No positive curvature along geodesics (after square 
homotopy)]\label{claim:no_positive_curvature_along_boundary}
Let $C$ be a positively-curved shell in $D$ with outer path $O$ and inner path $I$.  Then $O$ cannot be a subpath of 
$\alpha',\beta',$ or $\gamma'$.     Thus, $O=AB$, where $A$ is a nontrivial terminal subpath of $\alpha',\beta',$ or 
$\gamma'$ and $B$ is a nontrivial initial 
subpath of $\beta',\gamma',$ or $\alpha'$, respectively.
\end{claim}
\renewcommand{\qedsymbol}{$\blacksquare$}
\begin{proof}[Proof of Claim~\ref{claim:no_positive_curvature_along_boundary}]
Suppose $O$ is a subpath of $\alpha'$.  (The other cases are handled identically.)  Theorem~\ref{thm:short_inner_paths} 
shows that we can replace $O$ by a geodesic in $Y$ joining the 
endpoints of $I$ to get a strictly shorter path joining the 
endpoints of $\alpha'$, a contradiction.
\end{proof}

Claim~\ref{claim:no_positive_curvature_along_boundary}, and the discussion preceding it, put us in the following situation: $D$ has exactly \textbf{three features of 
positive curvature along the boundary}, which are subdiagrams $C_1,C_2,C_3$.  For each $i$, $\boundary_pC_i=OI$, where $O$ is 
a subpath of $\boundary_pD$ and $I$ is an internal path, and $O$ has at least one $1$-cube on each of two distinct subpaths among 
$\alpha',\beta',\gamma'$ of the boundary.

Let $C$ be a cone-cell of $D$.  Recall that $C$ is \emph{internal} if its boundary path 
intersects $\boundary_pD$ in a set containing no $1$--cube.  To rule out internal cone-cells, we need an auxiliary claim.

\begin{claim}[Internal cone-cell curvature contribution]\label{claim:internal_cone_cell_curvature_contribution}
     Let $C$ be an internal cone-cell of $D$.  Then the curvature at $C$ is strictly less than $-4\pi$.
\end{claim}

\begin{proof}[Proof of Claim~\ref{claim:internal_cone_cell_curvature_contribution}]
We argue almost exactly as in the proof of~\cite[Theorem 3.32]{Wise:QCH}, except exploiting our stronger small-cancellation 
assumption.  Since $D$  has minimal complexity, it is \emph{reduced} (in the sense of \cite[Definition 3.11]{Wise:QCH}), then $\boundary_pD$ is not the concatenation of fewer than $145$ pieces.  Now apply 
exactly the proof of~\cite[Theorem 3.31]{Wise:QCH}, except that the number $13$ in that proof is replaced by $145$, yielding 
a curvature contribution from $C$ of at most $2\pi-\frac{145\pi}{6}<-4\pi$.
\end{proof}

\begin{claim}[No internal cone-cells]\label{claim:no_internal_cone_cells}
The diagram $D$ does not contain an internal cone-cell.     
\end{claim}

\begin{proof}[Proof of Claim~\ref{claim:no_internal_cone_cells}]
Suppose that there are $n\geqslant0$ internal cone-cells.  

Let $v$ be a $0$--cube of (the rectified) $D$.  Then, by Theorem 3.32 of~\cite{Wise:QCH}, the curvature contribution 
from $v$ is:
\begin{enumerate}
 \item at most $0$ if $v$ is internal or not contained in a $2$--cell and not a spur;
 \item exactly $\pi$ if $v$ is a spur;
 \item exactly $\frac{\pi}{2}$ if $v$ is the corner of a square along $\boundary_pD$.
\end{enumerate}

Let $f$ be a $2$--cell of $D$ (a cone-cell, rectangle, or a \emph{shard} of the corresponding \emph{rectified 
diagram}~\cite{Wise:QCH}).  Then the curvature contribution is:
\begin{enumerate}
 \item at most $0$ if $f$ is a rectangle or shard;
 \item less than $-4\pi$ if $f$ is an internal cone-cell, by Claim~\ref{claim:internal_cone_cell_curvature_contribution};
 \item at most $2\pi$ if $f$ is a shell.
\end{enumerate}

Hence, our three features of positive curvature contribute a total of at most $6\pi$ of curvature, while the sum of the 
remaining curvatures is $<-4n\pi$.  This contradicts Theorem~\ref{thm:CBGT} unless $n=0$.  Thus, there are no internal 
cone-cells.
\end{proof}

We have to rule out another type of cone-cell.  A cone-cell $C$ in $D$ is \emph{shortly external} if its boundary path has 
the form $OI$, where $I$ is internal and $O$ is a subpath of $\alpha'$, $\beta'$, or $\gamma'$.  

\begin{claim}\label{claim:no_shortly_external}
     $D$ has no shortly-external cone-cell.
\end{claim}

\begin{proof}[Proof of Claim~\ref{claim:no_shortly_external}]
Note that $|O|\leqslant|I|$ since $\alpha',\beta',\gamma'$ are geodesics.  Following the proof of Theorem~3.29 
of~\cite{Wise:QCH}, we write $I$ as a concatenation of $k$ paths, each of which is a concatenation of at most $3$ pieces, 
such that each path contributes an angle defect of at least $\pi/4$.  Our small-cancellation condition guarantees that there 
are more than $144/(2\cdot 3)=24$ such paths, so the total curvature is at most $2\pi-k\pi/4<-4\pi$.  Exactly as in the 
proof 
of Claim~\ref{claim:no_internal_cone_cells}, we obtain a contradiction with Theorem~\ref{thm:CBGT} unless there are no 
shortly-external cone-cells.
\end{proof}

At this point, we have completed the curvature computations in the proof, and now regard $D$ as an ordinary (not 
rectified) diagram.

\textbf{Analysis of the cone-cells:}  Let $C$ be a cone-cell in $D$.  We would like to show that $C$ has connected 
intersection with each of $\alpha',\beta',\gamma'$.  

Suppose that for some 
$\delta\in\{\alpha',\beta',\gamma'\}$, there is a subpath $\delta'=PQR$ of $\delta$, where:
\begin{itemize}
     \item the paths $P,R$ are subpaths of the boundary path of $C$,
\item the terminal vertex of $P$ and initial vertex of $R$ subtend a subpath $Q'$ of $\boundary_pC$, such that
\item the path $QQ'$ bounds a subdiagram $E$ of $D$ between $C$ and $\boundary_pD$.
\item $Q$ and $\boundary_pC$ have no common $1$--cell.
\end{itemize}

 If $E$ contains no cone-cell, then $E$ is a square diagram between the relator $Y_i$ to which $C$ maps and the geodesic 
$Q$, so by local convexity of $Y_i$ in $X$, we have that $E\to X$ factors through $Y_i\to X$.  Hence $E$ could have been 
absorbed into the cone-cell $C$, whence minimality of the complexity of $D$ ensures that $E$ is trivial, i.e. $Q=(Q')^{-1}$. 
 This is a contradiction, so $E$ is not a square diagram.

We can assume $C$ is innermost, in the sense that any cone-cell $C_0$ in $E$ has connected intersection (possibly empty) with 
$Q$.

Such a cone-cell $C_0$ is not internal in $E$, for then it would be internal in $D$, violating 
Claim~\ref{claim:no_internal_cone_cells}.  So $C_0$ is shortly-external in $D$, or a positively-curved shell along $Q$, 
violating Claim~\ref{claim:no_shortly_external} or Claim~\ref{claim:no_positive_curvature_along_boundary}.

So, we have proven:

\begin{claim}\label{claim:connected}
For each cone-cell $C$ of $D$, the path $\boundary_pC$ has connected intersection with each of 
$\alpha',\beta',$ and $\gamma'$.  Moreover, $\boundary_pC$ intersects at least two of the paths $\alpha',\beta',\gamma'$.   
\end{claim}

\textbf{The current situation:}  Thus far, we have reduced to the following situation:
\begin{itemize}
     \item $D$ is not a ladder.
     \item $D$ has precisely $3$ features of positive curvature along its boundary path, which are subdiagrams $C_1,C_2,C_3$.
     \item The subdiagram $C_1$ has boundary path $ABI$, where $A$ is a nontrivial terminal subpath of $\alpha'$, $B$ is a 
nontrivial initial subpath of $\beta'$, and $I$ is a (possibly trivial) path.
\item Either $C_1$ is a single cone-cell (a shell) or $C_1$ is a spur, $|I|=0$, and $A$ is an edge and $B=A^{-1}$, or $C_1$ 
is a square, $|A|,|B|\geqslant1$, and $|I|\leqslant2$.  The same description holds for $C_2$ (with $\beta',\gamma'$ replacing 
$\alpha',\beta'$) and $C_3$ (with $\gamma',\alpha'$ replacing $\alpha',\beta'$).
\item Every cone-cell $C$ of $D$ not in $\{C_1,C_2,C_3\}$ has connected intersection with each of 
$\alpha',\beta',\gamma'$ and intersects at least $2$ of these paths.  We call $C$ a \emph{median-cell} if $C$ intersects all 
three of these paths, and a \emph{tail-cell} otherwise.
\end{itemize}

  We emphasise that if $C$ is a median cell, then it has nonempty, \emph{connected} 
intersection with each of $\alpha',\beta',\gamma'$.  Hence, if $C$ is a median-cell, then $C$ separates $D$ into three 
complementary components, each disjoint from one of the paths $\alpha',\beta',\gamma'$.  Hence, $C$ is the unique cone-cell 
of 
$D$ intersecting each of $\alpha',\beta',\gamma'$.  (We note that there may be other disc diagrams with the same boundary 
path, 
containing a different median-cell.)

We now divide into cases.  First, if $D$ contains a median-cell, then we are in one of the \emph{generic} cases, i.e. we 
will show that one of~\eqref{item:0g},\eqref{item:1g},\eqref{item:2g},\eqref{item:3g} holds, according to how many of 
$\{C_1,C_2,C_3\}$ are spurs or shells.  Otherwise, we will show that one 
of~\eqref{item:0t},\eqref{item:1t},\eqref{item:2t},\eqref{item:3t} holds. 

\textbf{The generic cases:}  Suppose that $D$ has a (unique) median-cell $M$ and let $i\in\{1,2,3\}$.  Let 
$\delta,\delta'\in\{\alpha',\beta',\gamma'\}$ be the parts of the boundary path of $D$ that intersect $C_i$.  Let 
$\boundary_pM=AP_1BP_2CP_3$, where $A,B,C$ are respectively subpaths of $\alpha',\beta',\gamma'$ and $P_1,P_2,P_3$ are 
internal 
paths.  Write $\alpha'=\bar\alpha' A\check\alpha',\beta'=\bar\beta' B\check\beta',\gamma'=\bar\gamma' C\check\gamma'$.  
Consider the 
subdiagram $L_1$ bounded by $A\check\alpha'\bar\beta' BP_2CP_3$.  The ladder theorem, Theorem~\ref{thm:ladder_thm}, and our 
above analysis of the possible features of positive curvature in (the rectification of) $D$ shows that $L_1$ is a ladder.  
The ladders $L_2,L_3$ are constructed analogously. 

\textbf{The tripod cases:}  Suppose there is no median-cell.  Then we have a subdiagram $T$ of $D$ with boundary path 
$AP_1BP_2CP_3$, where $A$ is a subpath of $\alpha'$, $B$ a subpath of $\beta'$, $C$ a subpath of $\gamma'$, and 
$P_1,P_2,P_3$ 
internal subpaths that lie on innermost cone-cells in $D$ or, if they do not exist, spurs or exposed squares in 
$\{C_1,C_2,C_3\}$.  By construction, $T$ is a possibly degenerate square diagram, and by convexity of relators and 
minimality, for each path $Q\in\{A,B,C,P_1,P_2,P_3\}$, no two dual curves in $T$ emanating from $Q$ can cross.  Moreover, no 
dual curve travels from $Q$ to $Q$ or to the next named subpath, for otherwise we could reduce complexity.  Some 
possibilities are shown in Figure~\ref{fig:tripod_cases}.  

\begin{figure}[h]
\begin{overpic}[width=\textwidth]{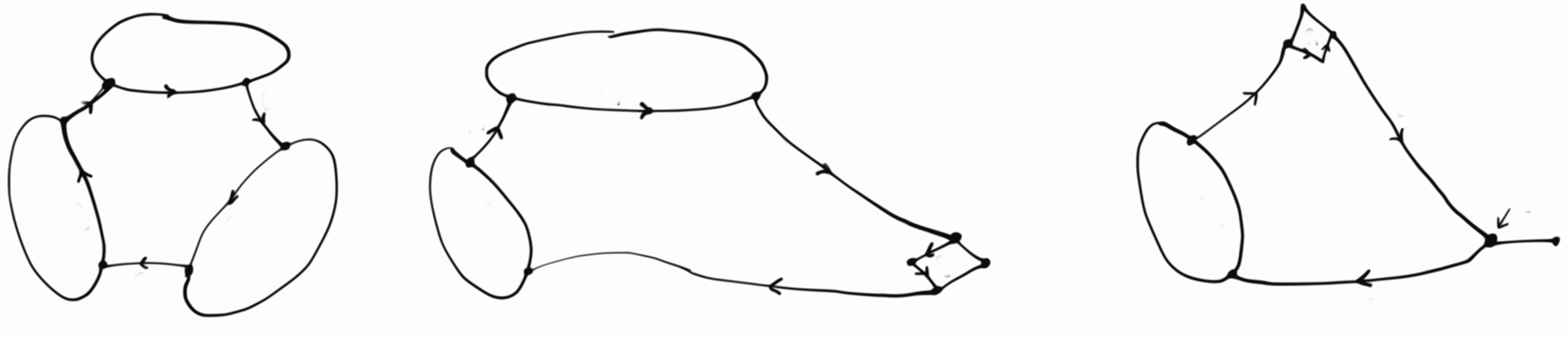}
     \put(9,2){$C$}
     \put(3,7){$P_3$}
     \put(6,13){$A$}
     \put(10,17){$P_1$}
     \put(15,7){$P_2$}
     \put(17,14){$B$}
     \put(55,4){$P_2$}
     \put(83,15){$P_1$}
     \put(96,9){$P_2$}
\end{overpic}

\caption{Some possibilities for the internal square subdiagram in the tripod cases.}\label{fig:tripod_cases}
\end{figure}

For convenience, we lift $T$ to a diagram $T\to\widetilde X$ (the CAT(0) cube complex $\widetilde X$, not the generalized 
Cayley graph).  Here, an analysis of the dual curves shows that $T$ decomposes as required; the analysis is indicated in 
Figure~\ref{fig:final_tripod}.
\begin{figure}[h]
 \begin{overpic}[width=0.8\textwidth]{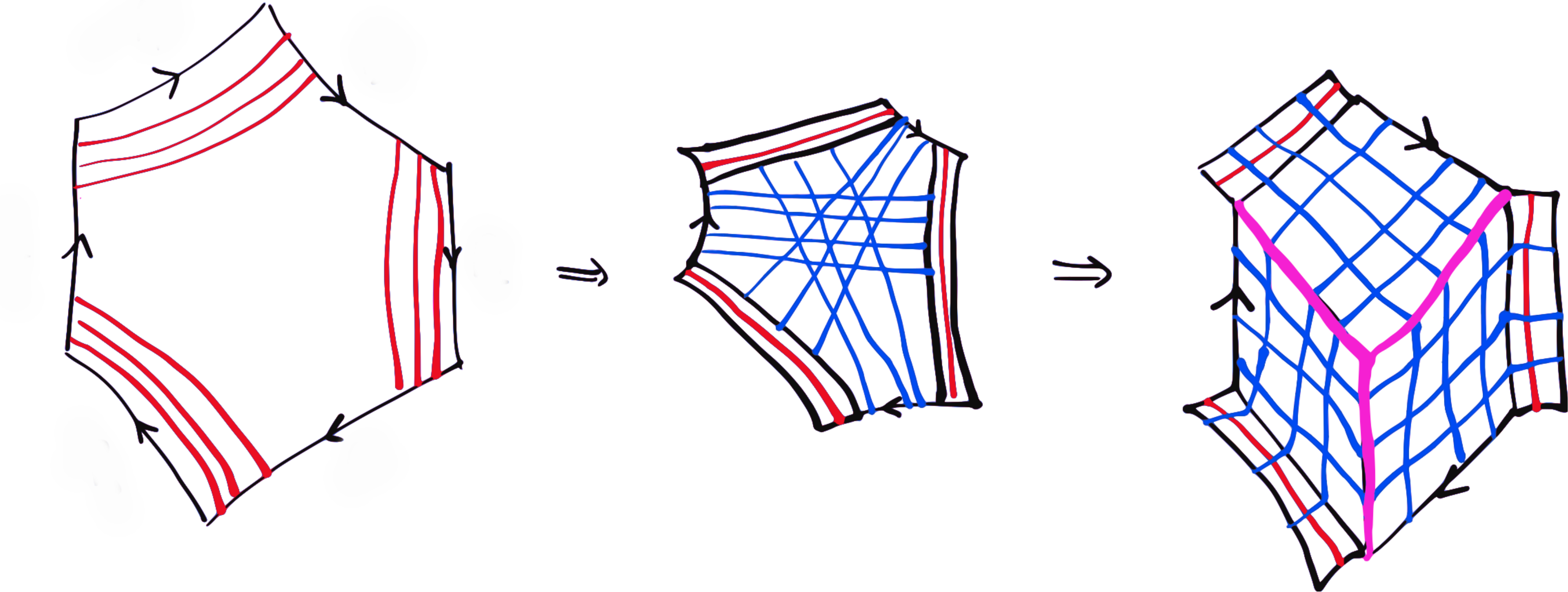}
      \put(2,22){$A$}
      \put(10,35){$P_1$}
      \put(24,31){$B$}
      \put(30,20){$P_2$}
      \put(24,9){$C$}
      \put(5,9){$P_3$}
 \end{overpic}

 \caption{The final square diagram analysis in the tripod cases.}\label{fig:final_tripod}
\end{figure}
First, consider dual curves in $T$ traveling from $A$ to $B$, $B$ to $C$, or $C$ to $A$.  Taking the union of all carriers 
of such dual curves yields rectangles attached to $P_1,P_2,P_3$.  Now consider the subdiagram that remains.  It is a hexagon 
bounded by subpaths of $A,B,C$ and parts of carriers of dual curves.  Dual curves in the subdiagram must travel from a 
subpath of $A,B,$ or $C$ to the antipodal dual-curve carrier.  Dual curves emanating from the same ``syllable'' of the 
boundary path do not cross, and we conclude, as at right in Figure~\ref{fig:final_tripod}, that this subdiagram is a 
``corner of a subdivided cube''.  It is now easy to deduce the padded ladder decomposition of $D$, with an application of 
Theorem~\ref{thm:ladder_thm}.  (Various parts of the 
picture may be degenerate, as suggested in Figure~\ref{fig:tripod_cases}.)
\renewcommand{\qedsymbol}{$\Box$}
\end{proof}

\section{Detecting WPD elements using the classification of triangles}\label{sec:metalemma}
In this section, we adopt the following assumptions and conventions:
\begin{enumerate}
 \item $\langle X\mid\{Y_i\}_{i\in\mathcal I}\rangle$ is a cubical presentation satisfying the $C'(\frac{1}{144})$ 
condition,   $X^*$ is the presentation complex, and $\widetilde X^*$ is the universal cover.  We assume that $X$ is locally 
finite, but we do \textbf{not} assume $X$ is \textbf{uniformly} locally finite.  

\item Since $\pi_1X$ is torsion-free, every $g\in\pi_1X-\{1\}$ has a combinatorial geodesic axis in the first cubical subdivision of $\widetilde X$ (see~\cite{Haglund:semisimple} for the notion of 
cubical subdivision and a proof of the statement about axes).  Since we can replace $X$ and each $Y_i$ by their first cubical subdivisions without changing the group of the cubical presentation or 
the small-cancellation conditions, \textbf{we assume in the rest of the paper} that each $g\in\pi_1X-\{1\}$ admits a combinatorial geodesic axis.

 \item Denote by $\dist$ the graph metric on $\cay(X^*)^{(1)}$, and by $\dist_{\widetilde X}$ the graph metric on 
$\widetilde X^{(1)}$.

\item Let there be a $\delta$--hyperbolic graph $\hyp$ and a coarsely surjective map $\Pi\colon\cay(X^*)\to \hyp$ so that:
\begin{enumerate}
 \item \label{item:coarsely_lipschitz} We have $\dist_{\hyp}(\Pi(x),\Pi(y))\leqslant\dist(x,y)$ whenever 
$x,y\in\cay(X^*)^{(0)}$.
 \item \label{item:cone_projection} If $Y_i\subseteq\cay(X^*)$ is any relator, then $\diam(\Pi(Y_i))\leqslant \delta$.
 \item \label{item:equivariant} The group $\pi_1X^*$ acts by isometries on $\hyp$ in such a way that $\Pi$ is 
$\pi_1X^*$--equivariant.
 \item \label{item:median_cell} Let $H$ be a hyperplane in $\cay(X^*)$.  Then
$\diam(\Pi(\neb(H)))\leqslant \delta$.

\end{enumerate}
\end{enumerate}

Under these conditions, we will prove a lemma --- Lemma~\ref{lem:acyl_version_2} --- showing that $\pi_1X^*$ contains a WPD 
isometry of $\hyp$ provided it contains a \emph{fast} loxodromic one (defined below).  Later, we choose 
specific $\hyp$ and $\Pi$.  The proof of Lemma~\ref{lem:acyl_version_2} will require us to show that certain paths produced 
by an application of Theorem~\ref{thm:strebel_cubical_small_can} fellow-travel.  We need some preliminary lemmas:

\begin{lem}[Ladders are thin between cone-cells]\label{lem:thin_ladders}
Let $L\to\widetilde X^*$ be a minimal complexity padded ladder with boundary path $\alpha\beta^{-1}\gamma$, where 
$\alpha,\beta\colon [0,\ell]\to\cay(X^*)$ are 
geodesics with $\alpha(\ell)=\beta(\ell)$ and $\gamma$ is a piece.  Let $\Delta$ be the maximum length of a subpath of 
$\alpha$ or 
$\beta$ that 
lies on a single cone-cell of $L$. Then there exists $\kappa_0=\kappa_0(\Delta)$ so that for 
all $t\leqslant \ell$, $\dist(\alpha(t),\beta(t))\leqslant\kappa_0$.
\end{lem}

\begin{proof}
Write $\alpha=\alpha_0\eta_1\alpha_1\cdots\eta_n\alpha_n$ and $\beta=\beta_0\eta'_1\cdots\eta'_n\beta_n$, where each 
$\alpha_i,\beta_i$ lies on the top or 
bottom boundary path of one of the constituent pseudorectangles of $L$ and each $\eta_i,\eta'_i$ lies on the boundary path 
of 
a cone-cell or cut-vertex (and hence has length at most $\Delta$).  The boundary path of the $i^{th}$ pseudorectangle has 
the form $\alpha_ip_i\beta_i^{-1}q_i$, as in Figure~\ref{fig:thin_ladder_setup}.  When $i=0$, the path $q_0=\gamma$, which 
is a piece.  

\begin{figure}[h]
\begin{overpic}[width=0.75\textwidth]{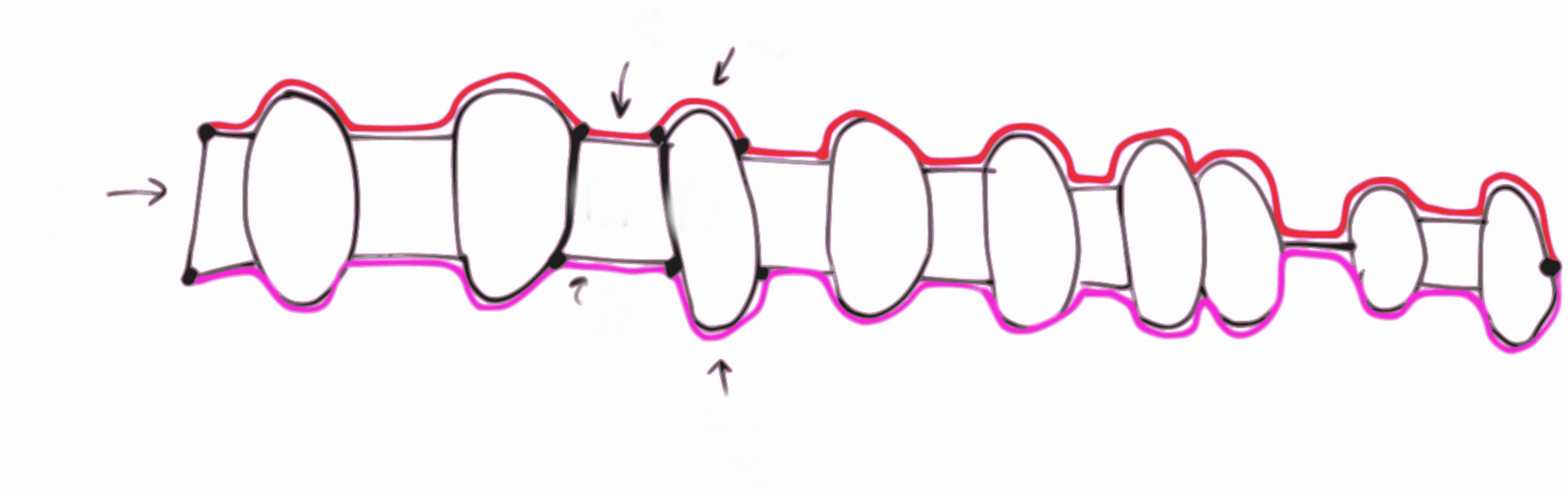}
     \put(5,19){$\gamma$}
     \put(37,10){$\beta_i$}
     \put(37,19){$q_i$}
     \put(40,29){$\alpha_i$}
     \put(47,29){$\eta_i$}
     \put(43,18){$p_i$}
     \put(46,4){$\eta_i'$}
\end{overpic}

\caption{Ladders are thin relative to cone-cells.}\label{fig:thin_ladder_setup}
\end{figure}

We observe that each $p_i,q_i$ is actually a piece.  Indeed, let $R_i$ be the pseudorectangle containing $p_i$, and 
let $C_i,C_{i+1}$ be the cone-cells adjacent to $R_i$.  Let $Y_i,Y_{i+1}$ be the relators in $\widetilde X^*$ to which 
$C_i,C_{i+1}$ map.  Since it is a square diagram, $R_i$ lifts to a disc diagram $R_i\to\widetilde X$ and the paths 
$q_i,p_i$ lie on elevations $\widetilde Y_i,\widetilde Y_{i+1}$ of $Y_i,Y_{i+1}$.  If $\widetilde Y_i=\widetilde 
Y_{i+1}$, then convexity of $\widetilde Y_i$ and the fact that $\alpha_i,\beta_i$ are geodesics implies that $R_i$ factors 
through $\widetilde Y_i$.  So, we could replace $C_i\cup R_i\cup C_{i+1}$ in $L$ by a single cone-cell, reducing complexity. 
 Hence $\widetilde Y_i,\widetilde Y_{i+1}$ are different, so $p_i,q_i$ are cone-pieces.

Hence, by the small cancellation conditions, there exists $M=M(\Delta)$ so that 
$|p_i|\leqslant M$ for all $i$.  Indeed, the boundary path of the cone-cell 
containing $p_i$ consists of one or two pieces, together with two subpaths of 
length at most $\Delta$.  Letting $\tau$ be the length of this boundary path, the 
small-cancellation conditions imply that $\tau<288\Delta/142$, and another 
application of the small-cancellation conditions gives $|p_i|<\Delta/71$.   

Since $\alpha,\beta$ 
are geodesics, 
we have $||\eta_i|-|\eta'_i||\leqslant 2M$ for all $i$, for otherwise we could construct shortcuts.  The lemma now follows 
easily.
\end{proof}

\begin{rem}[Rank one elements]\label{rem:rank_one_hull}
As usual (see e.g.~\cite{CapraceSageev:rank_rigidity}), $\tilde g\in\pi_1X$ is \emph{rank one} if it is hyperbolic on 
$\widetilde X$ and none of its axes lies in an isometrically embedded Euclidean half-plane.  Let $\alpha$ be a combinatorial 
geodesic axis in $\widetilde X$ for $\tilde g$.  Let $\mathcal W(\alpha)$ be the set of 
hyperplanes intersecting $\alpha$. Let 
$\mathcal C\alpha$ be the graph with vertex set $\mathcal W(\alpha)$, with vertices $H,V$ adjacent if the corresponding 
hyperplanes have intersecting carriers.  

Let $\widetilde B$ be the cubical convex hull of $\alpha$, which is a CAT(0) cube complex whose hyperplanes are 
exactly those in $\mathcal W(\alpha)$.  The graph $\mathcal 
C\alpha$ is exactly the \emph{contact graph} of $\widetilde B$, i.e. the intersection graph of its set of hyperplane 
carriers.  Considering the action of $\langle  \tilde g\rangle$ on $\widetilde B$, we 
see that there are finitely many $\langle  \tilde g\rangle$ orbits of hyperplanes in $\widetilde B$, and each has uniformly bounded 
coarse intersection with $\alpha$.  Hence, by~\cite[Theorem 2.4, Proposition 2.5]{Hagen:boundary}, $\langle  \tilde g\rangle$ has 
unbounded orbits in $\mathcal C\alpha$.  Therefore, since $\langle  \tilde g\rangle$ acts on $\mathcal C\alpha$ with finitely many 
orbits of vertices (each hyperplane of $\widetilde B$ is dual to one of $\langle  \tilde g\rangle$--finitely many $1$--cubes in 
$\alpha$), there exists $N$ such that if $H,V$ are hyperplanes intersecting 
$\alpha$ in $1$--cubes lying at distance more than $N$, then $H$ and $V$ 
cannot cross.
\end{rem}

\begin{lem}\label{lem:thin_bigons}
Let $\tilde g\in\pi_1X$ act hyperbolically on $\widetilde X$, and suppose that $\tilde g$ is rank one.  Then for each 
$\tilde x\in\widetilde X^{(0)}$, there exists $\kappa_1$ so that the following holds: if $n\geqslant0$ and 
$P,Q\colon [0,d]\to\widetilde X$ are combinatorial geodesics joining $\tilde x, \tilde g^n\tilde x$, then $\dist_{\widetilde 
X}(P(t),Q(t))\leqslant\kappa_1$ for $0\leqslant t\leqslant d$.
\end{lem}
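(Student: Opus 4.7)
The plan is to derive the lemma from the standard fact that a rank-one isometry of a CAT(0) cube complex admits a combinatorial axis that is \emph{contracting} in the graph metric $\dist_{\widetilde X}$, and hence \emph{Morse}: any combinatorial geodesic with endpoints in a bounded neighborhood of the axis is contained in a uniform neighborhood of the axis, with constants depending only on the initial neighborhood radius and the element $\tilde g$.

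First, I fix a combinatorial $\tilde g$--axis $A\subset\widetilde X$ and set $D:=\dist_{\widetilde X}(\tilde x,A)$. Since $A$ is $\tilde g$--invariant, every orbit point $g^n\tilde x$ is within distance $D$ of $A$, so the endpoints of every $P,Q$ as in the statement lie in the $D$--neighborhood of $A$.

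Second, I apply the Morse property of $A$ to obtain a uniform constant $R=R(D,\tilde g)$, independent of $n$, such that any combinatorial geodesic with endpoints in the $D$--neighborhood of $A$ is contained in the $R$--neighborhood of $A$. In particular both $P$ and $Q$ lie in this $R$--neighborhood.

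Finally, I upgrade Hausdorff containment in the $R$--neighborhood of $A$ to pointwise fellow-travelling. Let $\pi_A$ denote a combinatorial nearest-point projection onto $A$. Since $A$ is contracting, $\pi_A$ is coarsely Lipschitz on the $R$--neighborhood of $A$, and since $P$ and $Q$ are combinatorial geodesics of the common length $d$ with common endpoints, the projections $\pi_A\circ P$ and $\pi_A\circ Q$ both traverse $A$ from $\pi_A(\tilde x)$ to $\pi_A(g^n\tilde x)$ monotonically up to bounded backtracking, making linear progress in the parameter $t$. A triangle inequality then gives $\dist_{\widetilde X}(P(t),Q(t))\leqslant 2R+O(1)$ for all $t\in[0,d]$, and I set $\kappa_1$ to be the resulting constant, which depends on $\tilde x$ and $\tilde g$ but not on $n$.

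The step I expect to require the most care is the monotone-progress estimate for $\pi_A\circ P$ and $\pi_A\circ Q$ along $A$ in the third paragraph: one has to show that for a combinatorial geodesic lying in a bounded neighborhood of a contracting combinatorial axis, the projection to the axis moves at roughly unit speed with the geodesic parameter. This is where the contracting property of $A$ enters most essentially; the rest of the argument is just packaging.
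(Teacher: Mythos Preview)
Your argument is correct, but it is not the paper's argument. The paper gives a self-contained cube-complex proof using disc diagrams and dual curves: one takes a minimal-area square disc diagram $D$ with boundary $PQ^{-1}$, observes that every dual curve runs from $P$ to $Q$ (since $P,Q$ are geodesics), and notes that $\dist_{\widetilde X}(P(t),Q(t))$ equals the number of dual curves going from $P([0,t))$ to $Q((t,d])$ plus the number going from $Q([0,t))$ to $P((t,d])$. Any curve of the first type crosses any curve of the second type, so a failure of the lemma produces, for every $N$, two size-$N$ families $\mathfrak H,\mathfrak V$ of hyperplanes with every $H\in\mathfrak H$ crossing every $V\in\mathfrak V$; all but boundedly many of these hyperplanes also cross a fixed combinatorial axis $\alpha$ for $\tilde g$. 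This forces the cubical convex hull of $\alpha$ to contain $0$-cubes arbitrarily far from $\alpha$, contradicting rank-one.

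Your route instead invokes the contracting/Morse property of a rank-one axis as a black box and then runs the standard ``same-length geodesics near a Morse geodesic synchronously fellow-travel'' argument. This is more conceptual and portable to other coarse settings, and the step you flag as delicate (linear progress of $\pi_A\circ P$ in $t$) is routine once both geodesics lie in a uniform neighborhood of the geodesic $A$: from $\dist(P(t),\pi_A(\tilde x))\in[t-D,t+D]$ and $\dist(P(t),\pi_A(P(t)))\leqslant R$ one gets $|\dist_A(\pi_A(P(t)),\pi_A(\tilde x))-t|\leqslant R+D$, and likewise for $Q$. The paper's approach, by contrast, stays entirely within the hyperplane/disc-diagram toolkit used throughout and makes explicit exactly which cubical obstruction (arbitrarily large grids of hyperplanes on the axis) is ruled out by rank-one; it also avoids citing an external contracting-axis theorem, which matters here because $\widetilde X$ is only assumed locally finite and not uniformly so.
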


\begin{proof}
Let $\alpha\to\widetilde X$ be a combinatorial geodesic axis for $\tilde g$ and let $\tilde a\in\alpha$ be a $0$--cube.  
Given $n\geqslant0$, let $P,Q\colon [0,d_n]\to\widetilde X^{(1)}$ (where $d_n=\dist_{\widetilde X}(\tilde x, \tilde g^n\tilde x)$) be 
combinatorial geodesics joining $\tilde x, \tilde g^n\tilde x$ and let $D\to\widetilde X$ be a minimal-area disc diagram with 
boundary path $PQ^{-1}$.  Note that $D\to\widetilde X$ is actually an isometric embedding on the $1$--skeleton.  Indeed, 
every dual curve in $D$ travels from $P$ to $Q$ since $P,Q$ are geodesics.  Hence each dual curve maps to a distinct 
hyperplane, so that for any vertices $v,v'\in D$, the number of dual curves of $D$ separating $v,v'$ is equal to the number 
of hyperplanes in $\widetilde X$ separating their images.

Fix $t\in\{0,1,\ldots,d_n\}$.  The above discussion shows that $\dist_{\widetilde X}(P(t),Q(t))$ is bounded by the number of 
dual curves in $D$ that travel from $P([0,t))$ to $Q((t,d_n])$, plus the number of dual curves from $Q([0,t))$ to 
$P((t,d_n])$.  Each dual curve of the former type crosses each dual curve of the 
latter type.  

Let $\mathfrak H$ be the set of dual curves of the former type, and let 
$\mathfrak V$ be the set of dual curves of the latter type.  Let 
$N_1=|\mathfrak H|$ and $N_2=|\mathfrak V|$.  Note that since  all but at most 
$2\dist_{\widetilde X}(\tilde a,\tilde x)$ hyperplanes that cross $P$ 
cross $\alpha$, at least $N_1+N_2-2\dist_{\widetilde X}(\tilde a,\tilde x)$ 
hyperplanes in $\mathfrak V\cup\mathfrak H$ cross $\alpha$.  

Let $\mathfrak 
H'\subseteq\mathfrak H,\mathfrak V'\subseteq\mathfrak V$ be the subsets consisting 
of hyperplanes/dual curves that cross $\alpha$.  Then by 
Remark~\ref{rem:rank_one_hull}, we have $\min\{N_1-2\dist_{\widetilde X}(\tilde 
a,\tilde x),N_2-2\dist_{\widetilde X}(\tilde a,\tilde x)\}\leqslant N$, where $N$ 
depends only on $\tilde g$.   But $|\mathfrak H|=|\mathfrak V|$, since for each 
dual curve crossing $Q([0,t])$ and $P([t,d_n])$, there must be a dual curve 
crossing $P([0,t])$ and $Q([t,d_n])$.  Hence $\mathfrak H\cup\mathfrak V$ has 
bounded cardinality, and we conclude that $\dist_{\widetilde X}(P(t),Q(t))$ is 
bounded by some $\kappa_1$ depending only on $\tilde g$ and $\tilde x$.
\end{proof}

\begin{defn}[$\Delta$--fast]\label{defn:fast_loxodromic}
Let $g\in\pi_1X^*$ act on $\hyp$ as a loxodromic isometry and let $\Delta\geqslant 0$.  Then $g$ is $\Delta$--\emph{fast} if the 
following holds.  Let $\widetilde A$ be a combinatorial geodesic axis in $\widetilde X$ for some $\tilde g\in\pi_1X$ mapping 
to $g$.  Let $A$ be the image of $\widetilde A$ in $\cay(X^*)$ and let $x\in A$ be a $0$--cube.  Let $R\geqslant 0$ and let 
$\alpha$ be a geodesic in $\cay(X^*)$ from $x$ to $g^Rx$.  Then any subpath of $\alpha$ lying in a hyperplane carrier or 
relator has length at most $\Delta$.
\end{defn}

We are now ready for the main lemma:

\begin{lem}[Fast loxodromic implies WPD]\label{lem:acyl_version_2}
Suppose $g\in\pi_1X^*$ acts loxodromically on $\hyp$ and that $g$ is $\Delta$--fast for some $\Delta$.  Then for all 
$\epsilon>0,\bar x\in\hyp$, there exists $R\in\mathbb N$ 
so 
that $$|\{h\in G\mid\dist_\hyp(h\bar x,\bar x)\leqslant\epsilon,\dist_\hyp(hg^R\bar 
x,g^R\bar x)\leqslant\epsilon\}|<\infty,$$ i.e. $g$ is a WPD 
element. 
\end{lem}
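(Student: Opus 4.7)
The plan is to translate the WPD condition on the $\hyp$--action into a finiteness statement for the free, properly discontinuous $\cay(X^*)$--action, using the classification of triangles (Theorem~\ref{thm:strebel_cubical_small_can}), the thin-ladder lemma (Lemma~\ref{lem:thin_ladders}), and the coarse-projection hypotheses (a)--(e). First I would fix a $0$--cube $\tilde x_0\in\cay(X^*)^{(0)}$ whose image $\Pi(\tilde x_0)$ lies within a uniform $\hyp$--distance of $x$. By (a) and $\pi_1X^*$--equivariance, the set of $h$ we must bound changes by only a bounded amount if we test the WPD condition at $\Pi(\tilde x_0)$ and inflate $\epsilon$ by a uniform constant.

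Given any admissible $h$, I would form the $\cay(X^*)$--geodesic quadrilateral with corners $\tilde x_0,g^R\tilde x_0,hg^R\tilde x_0,h\tilde x_0$, pick a combinatorial diagonal, and apply Theorem~\ref{thm:strebel_cubical_small_can} to each of the two resulting triangles. Each triangle then decomposes as a union of three padded ladders meeting at a median cone-cell or tripod, so each side of the quadrilateral is an alternating concatenation of long edges of pseudorectangles and arcs along cone-cell boundaries. Hypothesis (b) bounds the $\hyp$--diameter of a rectangle edge in terms of its combinatorial length; hypothesis (c) collapses each cone-cell arc to $\hyp$--diameter at most $\mathfrak e$; hypothesis (e) collapses the square part of each padded ladder (a product of intervals by the dual-curve constraints in Definition~\ref{defn:ladder}) to $\hyp$--diameter at most $\mathfrak e$; and Lemma~\ref{lem:thin_ladders} pins the two long sides of each padded ladder to within $\kappa_0$ in $\cay(X^*)$ off the cone-cells. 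Composing the padded-ladder pairings of the two triangles through the common diagonal and invoking $\delta$--thinness of the $\hyp$--quadrilateral, one obtains a uniform constant $D_0$ (depending only on $\delta,\mathfrak e,\mathfrak f,\kappa_0,\epsilon$) such that every $0$--cube $\tilde p$ lying in the interior of a pseudorectangle edge on $[\tilde x_0,g^R\tilde x_0]$ is matched with a $0$--cube $\tilde q$ on $[h\tilde x_0,hg^R\tilde x_0]$ satisfying $\dist(\tilde p,\tilde q)\leqslant D_0$ in $\cay(X^*)$.

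Because $g$ acts loxodromically on $\hyp$, I would then choose $R$ so large that $\dist_\hyp(\Pi(\tilde x_0),\Pi(g^R\tilde x_0))\geqslant 100(D_0+\delta+\mathfrak e+\mathfrak f+1)$. Since each cone-cell arc and each embedded product block on $[\tilde x_0,g^R\tilde x_0]$ contributes at most $\mathfrak e$ to $\hyp$--progress, the geodesic must contain a $0$--cube $\tilde p$ deep in the interior of a pseudorectangle edge, far from every cone-cell and from the two median-cells. For such $\tilde p$ the matching above produces $\tilde q\in [h\tilde x_0,hg^R\tilde x_0]$ with $\dist(\tilde p,\tilde q)\leqslant D_0$, whence $h$ sends some $0$--cube $\tilde q'=h^{-1}\tilde q\in [\tilde x_0,g^R\tilde x_0]$ into the $D_0$--combinatorial ball around $\tilde p$. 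Local finiteness of $\cay(X^*)$ makes this ball finite, and freeness of the $\pi_1X^*$--action on $\cay(X^*)$ then forces $h$ into a finite set, as required.

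The hard part will be the quantitative step in the second paragraph, where one composes the padded-ladder pairings of the two triangles through the diagonal to obtain a direct $\cay(X^*)$--pairing of the sides $[\tilde x_0,g^R\tilde x_0]$ and $[h\tilde x_0,hg^R\tilde x_0]$. Concretely, one must verify (i) that transitions between cone-cells and pseudorectangles do not amplify the ladder-wise closeness obtained from Lemma~\ref{lem:thin_ladders}; (ii) that each square subdiagram inside a padded ladder genuinely embeds in $\cay(X^*)$ as a convex product of intervals, so that hypothesis (e) actually applies to it; and (iii) that once $R$ is sufficiently large, there exists a pseudorectangle edge on $[\tilde x_0,g^R\tilde x_0]$ whose interior is simultaneously far from every cone-cell and from both medians, so that the pigeonhole step in the third paragraph succeeds. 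Once these quantitative ingredients are secured, the local-finiteness conclusion follows by well-established lines.
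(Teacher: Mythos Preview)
Your overall architecture matches the paper's proof: split the quadrilateral with corners $\tilde x_0,g^R\tilde x_0,hg^R\tilde x_0,h\tilde x_0$ into two triangles along a diagonal, apply Theorem~\ref{thm:strebel_cubical_small_can} to each, extract the long padded ladders, and use Lemma~\ref{lem:thin_ladders} together with local finiteness of $\cay(X^*)$ and freeness of the action to conclude.

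There is, however, a genuine gap in the composition step (your items~(i) and~(iii)), and it is precisely the step the paper handles differently.  Lemma~\ref{lem:thin_ladders} does \emph{not} give unconditional $\kappa_0$--fellow-travelling of the two sides of a padded ladder: its conclusion is that $\alpha(t),\beta(t)$ are either $\kappa_0$--close \emph{or both lie on a common cone-cell}.  So when you try to push a point $\tilde p$ from the side $[\tilde x_0,g^R\tilde x_0]$ across the first ladder to the diagonal and then across the second ladder to $[h\tilde x_0,hg^R\tilde x_0]$, the intermediate point on the diagonal may land on a cone-cell arc of the second triangle's ladder, and that arc could a priori be arbitrarily long in $\cay(X^*)$.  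Your pigeonhole argument in the third paragraph does not rescue this: knowing that every cone-cell arc and every product block contributes at most $\mathfrak e$ of $\hyp$--progress only tells you there are \emph{many} segments; it does not produce a pseudorectangle edge that is long in $\cay(X^*)$, nor a point that is $\cay(X^*)$--far from all cone-cells on \emph{both} ladder decompositions simultaneously.

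The missing ingredient, which the paper supplies under the heading ``Consequences of axiality of $g$'', is this: because $g$ is loxodromic on $\hyp$, the projection $\Pi\circ A_1'$ (with $A_1'$ the long side of the ladder square-homotopic to a subpath of the axis) is a uniform quasigeodesic in $\hyp$.  Since each cone-cell and each non-horizontally-degenerate pseudorectangle has $\Pi$--image of diameter $\leqslant\mathfrak e$ by~(c) and~(e), the corresponding arcs $\sigma_i,\rho_i$ on $A_1'$ have $\cay(X^*)$--length bounded by a constant $\Delta=\Delta(g,x)$ independent of $R$ and $h$.  The same applies to the second ladder.  With this uniform bound in hand, the exception clause in Lemma~\ref{lem:thin_ladders} costs only an additive $\Delta$, and the composition through the diagonal goes through (this is the ``reparameterization computation'' at the end of the paper's proof).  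You should replace your pigeonhole step by this length bound.

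A smaller point on your~(ii): the pseudorectangles $R_i$ in Definition~\ref{defn:ladder} are square diagrams with constraints on dual curves, but nothing in the definition forces them to embed in $\cay(X^*)$ as products of intervals, so hypothesis~(e) does not apply to them as written.  In the paper this is harmless, because once the length bound $\Delta$ is available, one does not need~(e) on pseudorectangles at all; Lemma~\ref{lem:thin_ladders} already controls their thickness.
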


\begin{proof}
Fix $\epsilon>0$ and let $\bar x\in\hyp$; since $\Pi$ is coarsely surjective, we can assume $\bar x=\Pi(x)$ for some 
vertex $x$ of $\cay(X^*)$.  We will actually show that the claim holds provided $R$ is chosen sufficiently 
large (in terms of $g,x,\epsilon$).

Let $\tau\geqslant1$ be the 
translation length of $g$ on the graph $\hyp$.  

It suffices to prove the claim for a specific 
$x$, so we can assume that $x$ lies on the image in $\cay(X^*)$ of the combinatorial geodesic axis of some lift of $g$ to 
$\pi_1X$.  Hence, since $g$ is $\Delta$--fast, for any $R$ and any geodesic $\alpha$ from $x$ to $g^Rx$, any 
subpath of $\alpha$ lying in a hyperplane carrier or a relator has length at most $\Delta$.  

Fix an integer $R$ satisfying $R\geqslant10^9(\epsilon+2\delta+\Delta+\kappa_1')/\tau$, where $\kappa_1'$ is a constant 
depending only on $g$ and $x$ and chosen below.  

\textbf{Rank one lift:} Let $\tilde g\in\pi_1X$ be any lift of $g$ and let 
$\widetilde A$ be a 
combinatorial geodesic axis for $\tilde g$.  If $\tilde g$ is not rank one, then 
the image  $A$ of $\widetilde A$ in $\hyp$ 
has diameter at most $3\delta$, by property~\eqref{item:median_cell} of the 
map $\Pi$ together with~\cite[Proposition 
5.1]{Hagen:boundary}.  This contradicts that $g$ is loxodromic.  Hence $\tilde 
g$ is rank one.

\textbf{Setup for verifying WPD condition:}  Let $y=g^Rx$.  Fix a combinatorial 
geodesic $\alpha$ of $\widetilde X^*$ from $x$ to $y$.  Suppose that 
$h\in\pi_1X^*$ satisfies $\dist_{\hyp}(\Pi(x),\Pi(hx))<\epsilon$ and $\dist_{\hyp}(\Pi(y), 
\Pi(hy))<\epsilon$.

\textbf{The triangle:}  Let $\beta$ be a 
$\cay(X^*)$--geodesic from $x$ to $hx$, let $\eta$ be a geodesic from $hy$ to 
$y$, and let $\gamma$ be a geodesic from $hx$ to $y$, so that we have geodesic 
triangles $\alpha^{-1}\beta\gamma$ and $\eta^{-1}(h\alpha)^{-1}\gamma$ with common side $\gamma$, as in 
Figure~\ref{fig:orientation_guide}.

\begin{figure}[h]
 \begin{overpic}[width=0.75\textwidth]{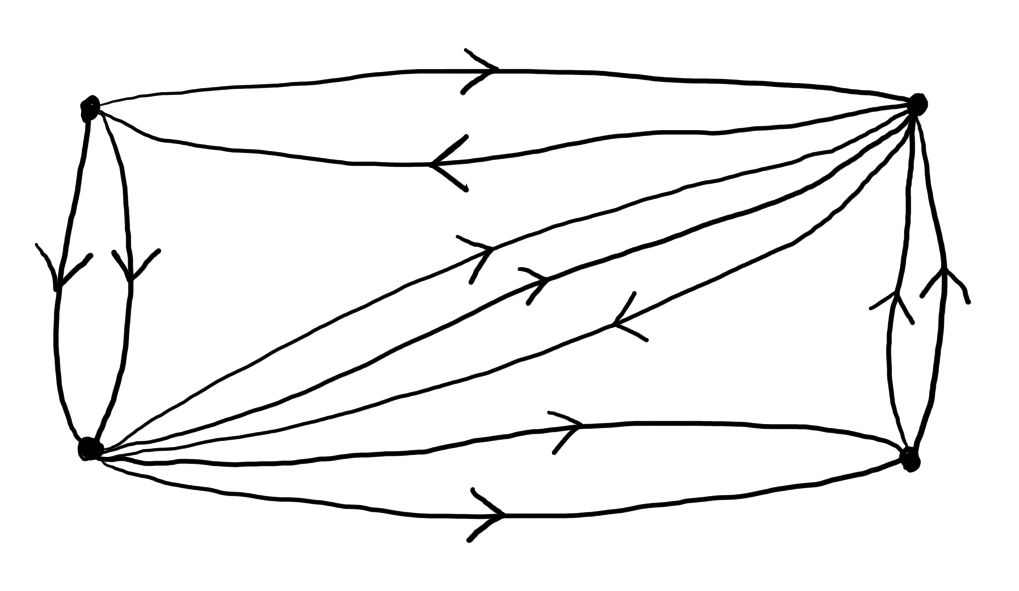}
  \put(55,52){$\alpha$}
  \put(55,3){$h\alpha$}
  \put(8,50){$x$}
  \put(91,50){$y$}
  \put(8,10){$hx$}
  \put(91,10){$hy$}
  \put(2,30){$\beta$}
  \put(95,30){$\eta$}
  \put(55,29){$\gamma$}
  \put(35,38){$A_1$}
  \put(13,25){$B_1$}
  \put(26,26){$C_1$}
 \put(70,18){$A_2$}
  \put(83,25){$B_2$}
  \put(70,27){$C_2$}
 \end{overpic}
\caption{The geodesic triangles, showing the orientations of the various paths.}\label{fig:orientation_guide}
\end{figure}

\begin{figure}[h]
\begin{overpic}[width=\textwidth]{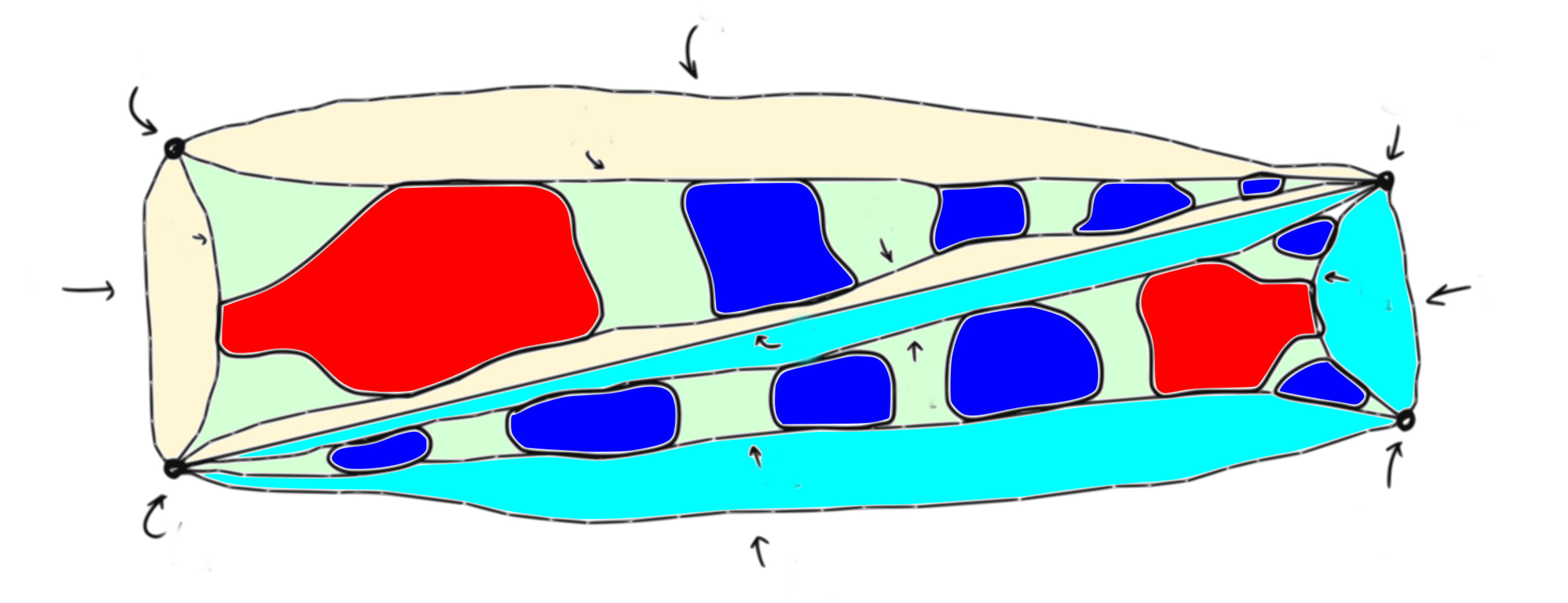}
     \put(45,37){$\alpha$}
     \put(8,33){$x$}
     \put(88,31){$y=g^Rx$}
     \put(2,19){$\beta$}
     \put(10,22){$B_1$}
     \put(11,3){$hx$}
     \put(36,30){$A_1$}
     \put(49,0){$h\alpha$}
     \put(48,7){$A_2$}
     \put(50,16){$\gamma$}
     \put(57,13){$C_2$}
     \put(55,24){$C_1$}
     \put(88,5){$hy$}
     \put(86,20){$B_2$}
     \put(95,20){$\eta$}
     
\end{overpic}

\caption{The two triangles in the proof of Lemma~\ref{lem:acyl_version_2}.  The padded ladder $L_1$ is the subdiagram of 
$S_1$ between the red subdiagram
(cone-cell or union of $3$ square grids) intersecting $A_1,B_1,C_1$ and the point $y$.  The ladder $L_2$ is the subdiagram 
of 
$S_2$ between the analogous red subdiagram (on the right) and $hx$.}\label{fig:initial_decomposition}
\end{figure}

\textbf{Applying the classification of triangles:}  By Theorem~\ref{thm:strebel_cubical_small_can}, we have a disc 
diagram $D=D_1\cup_{\gamma}D_2\to\widetilde X^*$, shown in Figure~\ref{fig:initial_decomposition}, with boundary path $\alpha^{-1}\beta(h\alpha)\eta$, with the following structure:
\begin{itemize}
 \item The diagram $D_1$ has boundary path $\alpha^{-1}\beta\gamma$ and $D_2$ has boundary path $\eta^{-1}(h\alpha)^{-1}\gamma$.
 \item The diagrams $D_1$ and $D_2$ are minimal for the given boundary paths.
 \item For $i\in\{1,2\}$, the diagram $D_i$ decomposes as $B_i^1\cup B_i^2\cup B_i^3\cup S_i$, where $S_i$ is a standard 
diagram in the sense of Theorem~\ref{thm:strebel_cubical_small_can} and each $B_i^j$ is a bigon diagram in $X$ (i.e., no 
cone cells).  The boundary path of $S_i$ is a geodesic triangle $A_iB_iC_i$, where $A_1\alpha,$ 
$B_1\beta^{-1},C_1\gamma^{-1}$ are the boundary paths of $B_1^1,B_1^2,B_1^3$ respectively, and $A_2(h\alpha)^{-1},B_2\eta^{-1}$, 
and $C_2\gamma$ are the boundary paths of $B_2^1,B_2^2,B_2^3$ respectively.
 \item The diagram $S_i$ contains a constituent padded ladder $L_i$ whose image in $\widetilde X^*$ projects under $\Pi$ to 
a set of diameter at least $R-2(\epsilon+2\delta)$, along with two ladders projecting to sets of diameter 
$\leqslant10(\epsilon+2\delta)$.  Specifically, the 
padded 
ladder $L_1$ is the subdiagram of $D_1$ obtained as follows: either $D_1$ is a ladder, in which case $L_1=D_1$, or there is 
a 
cone-cell or tripod with $3$ complementary components, all of whose closures are padded ladders; $L_1$ is the padded ladder 
among these that contains $y$.  The padded ladder $L_2$ is defined analogously.
\end{itemize}

The diagram $D$ is formed by gluing the minimal diagrams $D_1,D_2$ along $\gamma$.  Minimality of $D_1,D_2$ was used in order to apply Theorem~\ref{thm:strebel_cubical_small_can} to extract the 
padded ladders and bigons.  (It does not matter whether or not the entire diagram $D$ is minimal for its boundary path.)

The above notation is summarized in Figure~\ref{fig:initial_decomposition}.

\textbf{Bounds on cones and pseudorectangles:}  
Let $A'_1$ be the part of $A_1$ on the boundary path of the ladder $L_1$.  (Note that $(A_1')^{-1}$ starts somewhere on 
$A_1$, and ends at $y$.)

Then there is a decomposition 
$(A_1')^{-1}=\rho_0\sigma_1\rho_1\cdots\sigma_s\rho_s$, where each $\rho_i$ lies on a 
pseudorectangle and each $\sigma_i$ lies on the boundary path of a cone-cell.  
Our choice of $\Delta$ ensures that $|\sigma_i|,|\rho_i|\leqslant\Delta$, with 
the following exception: we may have $|\rho_i|>\Delta$ if the pseudorectangle 
carrying $\rho_i$ is horizontally degenerate. 

The maximal subpath $A_2'$ of $A_2$ (starting at $hx$) lying on the ladder $L_2$ decomposes as 
$\vartheta_0\varsigma_1\cdots\varsigma_\ell\vartheta_\ell$, where each $\vartheta_i$ lies on a pseudorectangle, each 
$\varsigma_i$ 
lies on a cone-cell, and each $|\varsigma_i|,|\vartheta_i|\leqslant\Delta$, except that we may have $|\vartheta_i|>\Delta$ 
if 
$\vartheta_i$ is carried on a horizontally degenerate pseudorectangle.  See Figure~\ref{fig:long_ladders}.    

\begin{figure}[h]
\begin{overpic}[width=0.9\textwidth]{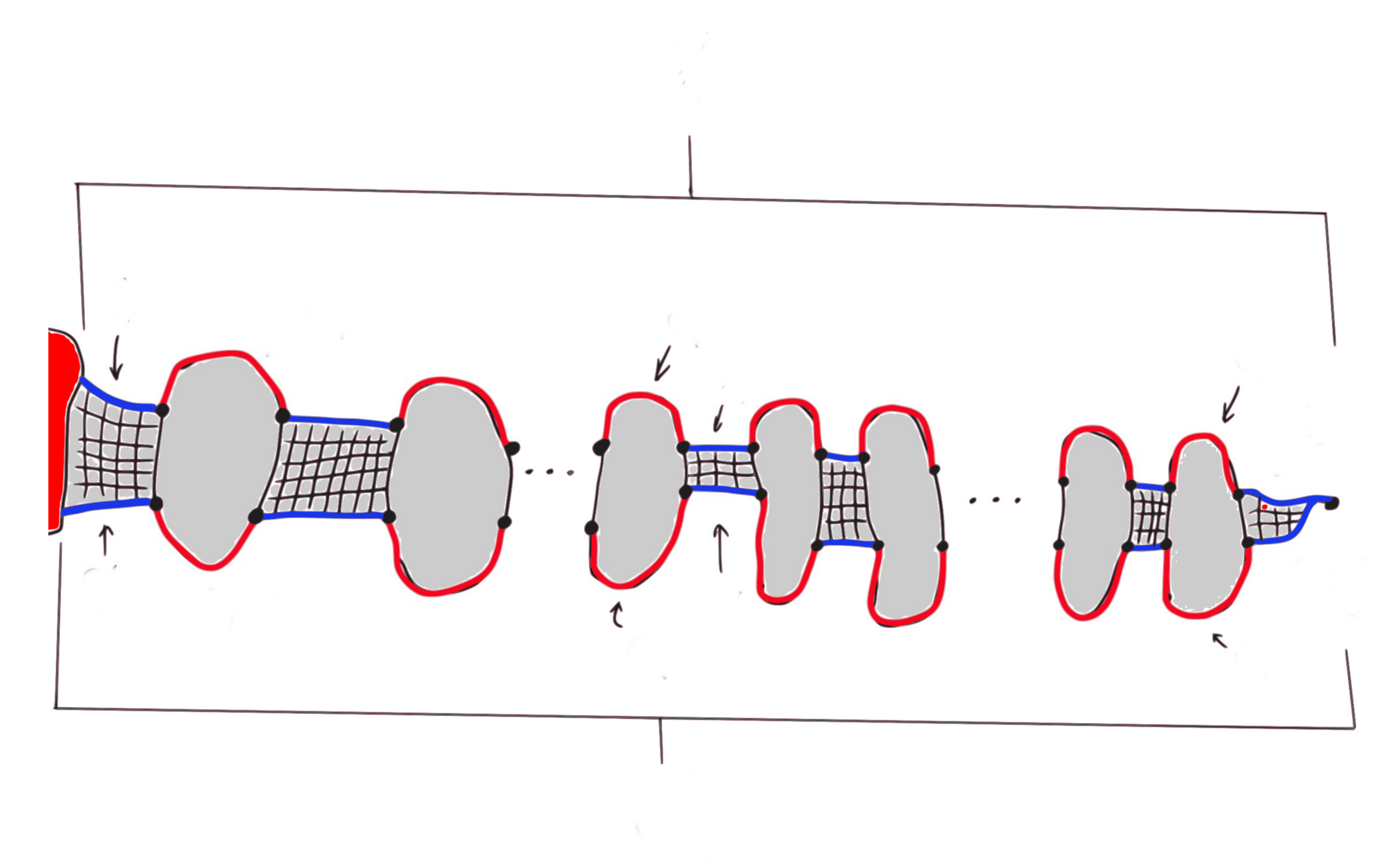}
     \put(49,55){$(A_1')^{-1}$}
     \put(47,5){$C_1'$}
     \put(7,20){$\rho_0'$}
     \put(8,40){$\rho_0$}
     \put(49,39){$\sigma_i$}
     \put(45,16){$\sigma_i'$}
     \put(52,34){$\rho_i$}
     \put(52,19){$\rho_i'$}
     \put(90,36){$\sigma_s$}
     \put(90,15){$\sigma_s'$}
\end{overpic}

\caption{The ladder $L_1$.  The paths $(A'_1)^{-1}$ and $C'_1$ travel from left to right.}\label{fig:long_ladders}
\end{figure}

\textbf{The paths $C_1',C_2'$:}  For each $i$, let $R_i$ be the pseudorectangle carrying $\rho_i$ and let $\rho'_i$ be the 
part of the boundary path of $R_i$ 
parallel to (i.e. crossing the same dual curves as) $\rho_i$.  Let $K_i$ be the cone-cell carrying $\sigma_i$ and let 
$\sigma'_i$ be the part of $\boundary_pK_i$ between $\rho_{i-1}'$ and $\rho_i'$, as shown in Figure~\ref{fig:long_ladders}.  
Let $C_1'=\rho_0'\sigma_1'\rho_1'\cdots\sigma_s'\rho_s'$ be the part of $C_1$ formed by concatenating these paths.  Define 
$\varsigma_i',\vartheta_i'$, and the resulting subpath $C_2'$ of $C_2^{-1}$ analogously.  

\begin{claim}[Fellow-traveling of $\alpha,A'_1$]\label{claim:alpha_A'}
There exist $\kappa_1,s_0\geqslant 0$, depending only on $g,x,R$, such that the following hold:
\begin{itemize}
 \item Parametrising $h\alpha$ so that $(h\alpha)(t)=h\cdot\alpha(t)$, and parametrising $A_2$ so that it starts at 
$hx$, we have $\dist(h\alpha(t),A_2'(t))\leqslant\kappa_1$ for $0\leqslant t\leqslant |A'_2|$.

\item For $0\leqslant t\leqslant |A'_1|$, we have $\dist(\alpha(t+s),(A'_1)^{-1}(t))\leqslant\kappa_1$ for some $s\leqslant s_0$.  Hence, after enlarging $\kappa_1$ by an amount depending only on $g,x,R$, we have 
$\dist(\alpha(t),(A'_1)^{-1}(t))\leqslant\kappa_1$.  
\end{itemize}
\end{claim}
\renewcommand{\qedsymbol}{$\blacksquare$}
\begin{proof}[Proof of Claim~\ref{claim:alpha_A'}]
Recall that $\tilde g$ is rank one.  Now, $\alpha A_1$ lifts to a geodesic bigon in $\widetilde X$ 
(because the diagram between them is a square diagram and hence lifts), and 
Lemma~\ref{lem:thin_bigons} shows that $\alpha$ and $A_1$ lie at Hausdorff distance $\kappa_1'$ bounded in terms of $g$ and 
$x$. 

Choose $s$ so that $\dist(\alpha(s),(A'_1)^{-1}(0))\leqslant \kappa_1'$.  For the given $\alpha$ and choice of diagram $D$, we have such an $s$.  Since $D$ is one of finitely many possible diagrams 
constructed in the given way for the finitely many choices of $\alpha,\beta,\gamma,\eta$ ($\cay(X^*)$ is locally finite and $gx^R$ is fixed), there is an upper bound $s_0$ on such $s$ depending 
only on $g,x,R$.  A computation supplies $\kappa_1$ in terms of 
$\kappa_1'$, proving the second assertion.

The first assertion follows similarly, except one does not need $s$ since $h\alpha$ and $A'_2$ have the same initial point.  (The asymmetry is because $\alpha$ and $(A'_1)^{-1}$ have the same 
\emph{terminal} point; refer to Figure~\ref{fig:orientation_guide}.)
%
\end{proof}

(Note that $s$ is small in the following sense.  Since $\Pi\circ\alpha$ has length at least $\tau R$ in $\hyp$, and 
$\dist_{\hyp}(\Pi((A'_1)^{-1}(0)),\Pi(x))\leqslant 
10(\epsilon+2\delta)+2\delta$, the path $\alpha|_{[s,|\alpha|]}$ projects to a path in $\hyp$ of length at least $\tau 
R-(10\epsilon+22\delta+\kappa_1')$.  Since $\Pi$ is lipschitz, the same number bounds from below the length of 
$\alpha|_{[s,|\alpha|]}$.  We won't need this fact, but we will use 
a similar fact about the paths $C_1',C_2'$ established below.)

\textbf{Fellow-traveling of $C_1',C_2'$:}  Next, consider the subdiagram 
$E=B_1^3\cup_\gamma B_2^3$ of $D$ bounded by $C_1C_2$.  Since $C_1,C_2$ 
are geodesics, and $E$ is a square diagram, every dual curve starting on $C_1$ 
ends on $C_2$ and every dual curve starting on $C_2$ ends on $C_1$.  

Let $K,K'$ be dual 
curves that emanate from $C'_1$ and cross each other.  Let $C''_1$ 
be the subpath of $C'_1$ between and including the $1$--cubes dual to $K$ and 
$K'$.  These $1$--cubes $e,f$ respectively lie on common cone-cells or 
pseudorectangles with points $a_e,a_f$ on $A'_1$.  We can choose these so that 
$\dist(e,a_e),\dist(f,a_f)\leqslant2\Delta$.

Now, $a_e,a_f$ respectively lie at distance 
at most $\kappa_1$ from points on $b_e,b_f\in\alpha$.  Hence $\dist(e,b_e)\leqslant2\Delta+\kappa_1$ and 
$\dist(f,b_f)\leqslant 2\Delta+\kappa_1$.  Thus $\dist(b_e,b_f)\geqslant|C''_1|-4\Delta-2\kappa_1$.  On the 
other hand, 
$\dist_\hyp(\Pi(b_e),\Pi(b_f))\leqslant2\delta+4\Delta+2\kappa_1$, by 
property~\eqref{item:median_cell} of $\Pi$ and the 
fact that $K,K'$ cross.  (Indeed, there is a path in $\neb(K)\cup\neb(K')$ from 
$e$ to $f$, so $\dist_\hyp(\Pi(e),\Pi(f))\leqslant2\delta$, and 
$\neb(K),\neb(K')$ map to hyperplane carriers in $\cay(X^*)$, whose images in 
$\hyp$ have diameter at most $\delta$.)

Let $\upsilon_\alpha=\max\frac{\dist(p,q)}{\dist_\hyp(\Pi(p),\Pi(q))}$, where $p,q$ 
vary over vertices 
of $\alpha$ with distinct images in $\hyp$, and let $\upsilon$ be the maximal $\upsilon_\alpha$ over the (finitely many) 
choices of 
$\alpha$ with the given endpoints (here we are using that $X$, and hence $\cay(X^*)$, is locally finite).    
Let $\zeta_\alpha$ be the maximum of 
$\dist(p,q)$ as $p,q$ vary over vertices of $\alpha$ with $\Pi(p)=\Pi(q)$, and 
let $\zeta$ be the maximum of the $\zeta_\alpha$ over all choices of $\alpha$.  
Note that $\upsilon,\zeta$ depend on $g,x$ and $R$, but not $h$ or $\alpha$.

So, $|C''_1|\leqslant ( 2\delta+10\Delta+2\kappa_1)\upsilon + 
\zeta+10\Delta+2\kappa_1$.  In other words, there exists $N$ depending only on 
$g,x,R$ such that any two dual curves that emanate from $C_1'$ at distance more 
than $N$ cannot cross.

Parametrise $C_1,C_2$ so that $C_1(0)=C_2^{-1}(0)=hx$.  Our choice of $R$ ensures 
that there exist $t_0,t_0'$, depending only on $R,\epsilon,\Delta,\delta,\tau$ so that $C_1(t),C_2^{-1}(t)$ lie on $C'_1,C'_2$ respectively when $t_0\leqslant
t\leqslant t_0'$.  

Let $t=\frac{t_0'-t_0}{2}$ and let $z=C_1(t)$.  Let 
$z'=C_2^{-1}(t)$.  Since $E$ is a square diagram, it lifts to a square diagram in $\widetilde X$ bounded by geodesics lifting 
$C_1,C_2^{-1}$.  So $\dist(z,z')$ is bounded by the number of dual curves $K$ in $E$ that cross $C_1$ before $z$ and $C_2^{-1}$ after 
$z'$, or vice versa.  Our choice of $R$ ensures that any such $K$ cannot cross $C_1-C_1'$ or $C_2-C_2'$, since 
property~\eqref{item:median_cell} would 
then provide a shortcut in $\hyp$ from $\Pi(hx)$ to $\Pi(gx)$.  Let $K$ be a dual curve crossing $C_1'$ before $z$ and 
$C_2'$ after $z$ and let $L$ be a dual curve crossing $C_2'$ before $z'$ and $C_1'$ after $z$.  Then the distance along 
$C_1'$ between $K$ and $L$ is at most $N$, so $K$ and $L$ are $N$--close to $z$.  Thus $\dist(z,z')$ is bounded in terms of 
$N$, 
say by some $\kappa_2$.

\textbf{Fellow-traveling of $(A'_1)^{-1},C_1'$ and $A'_2,C'_2$:}  By 
Lemma~\ref{lem:thin_ladders}, there exists $\kappa_3$, depending on $\Delta$ 
and the small-cancellation assumption, so that 
$\dist((A'_1)^{-1}(t),C_1'(t))\leqslant\kappa_3$.  The same is true 
for $A'_2,C'_2$.

\textbf{Conclusion:}  Let $z,z'$ be as above.  Then $z$ is uniformly close to 
$\alpha(t)$ (the distance is bounded by $\kappa_1+\kappa_3$), and the same is 
true for $h\alpha(t),z'$.  Hence $\dist(\alpha(t),h\alpha(t))\leqslant 
2(\kappa_1+\kappa_3)+\kappa_2$, which does not depend on $h$. Since $\cay(X^*)$ 
is locally finite and $\pi_1X^*$ acts freely on $\cay(X^*)$, the action of 
$\pi_1X^*$ on $\cay(X^*)$ is metrically proper and hence there are 
finitely many such $h$.
\renewcommand{\qedsymbol}{$\Box$}
\end{proof}

\section{The hyperbolic space $\hyp$ and the projection $\Pi:\cay(X^*)\to\hyp$}\label{sec:hyp}
Let $\langle X\mid\{Y_i\}_{i\in\mathcal I}\rangle$ be a cubical $C'(\frac{1}{144})$ presentation and define a space $\hyp$ as follows.  
First, let $\hyp'$ be the $1$--skeleton of $\widetilde X^*$.  This consists of the $1$--skeleton of $\cay(X^*)$, together 
with a combinatorial cone on each lift of each $Y_i$.  We form $\hyp$ from $\hyp'$ by adding a combinatorial cone on the 
carrier of each hyperplane.

We also have a projection $\Pi\colon\cay(X^*)\to\hyp$, defined as follows.  On the $1$--skeleton of $\cay(X^*)$, we declare 
$\Pi$ to be the inclusion.  If $c$ is a cube of $\cay(X^*)$ with $\dimension(c)\geqslant2$, we send $c$ arbitrarily to a 
point in the image of its $1$--skeleton.  However, we require this choice to be made $\pi_1X^*$--equivariantly, so that 
$\Pi$ is $\pi_1X^*$--equivariant.  Obviously $\Pi$ is coarsely surjective and $1$--Lipschitz on the $1$--skeleton of 
$\cay(X^*)$.  By construction, 
$\Pi$ sends each cone to a set of diameter $\leqslant2$, while hyperplane carriers in $\cay(X^*)$ are sent to 
subsets of $\hyp$ with diameter at most $2$.  Hence, to see that $\hyp$ and $\Pi$ satisfy the conditions required in 
Section~\ref{sec:metalemma}, we need only to prove that $\hyp$ is hyperbolic.

\begin{lem}[Square bigons have thin projection]\label{lem:square_bigon_project}
Let $\alpha,\beta\to\cay(X^*)$ be geodesics with common endpoints, and suppose that $\alpha\beta$ bounds a disc diagram 
$D\to\widetilde X^*$ that does not contain any cone-cells.  Then $\Pi(\alpha),\Pi(\beta)$ lie at uniformly bounded Hausdorff 
distance in $\hyp$.
\end{lem}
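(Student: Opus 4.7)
The plan is to exploit the fact that in a square-only disc diagram between two geodesics with common endpoints, every dual curve travels from $\alpha$ to $\beta$, and then to combine this with the construction of $\hyp$, which collapses each hyperplane-carrier to a set of $\hyp$--diameter at most $2$.

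First I would observe that since $D\to\widetilde X^*$ contains no cone-cells, it is an ordinary square disc diagram, so every dual curve has both ends on $\boundary_pD=\alpha\beta^{-1}$. Because $\alpha$ and $\beta$ are geodesics with shared endpoints, no dual curve can have both ends on $\alpha$ or both ends on $\beta$: two edges of $\alpha$ dual to the same hyperplane would force $\alpha$ to cross that hyperplane twice, contradicting that it is geodesic (and symmetrically for $\beta$). Hence every dual curve of $D$ joins an edge of $\alpha$ to an edge of $\beta$, and in particular both endpoint-edges are dual to the same hyperplane $H$ of $\widetilde X$.

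Next, fix a $0$--cube $v$ of $\alpha$. If $v$ is one of the shared endpoints then $v\in\beta$, so assume otherwise and let $e$ be an edge of $\alpha$ incident to $v$, dual to a hyperplane $H$. The dual curve of $D$ crossing $e$ terminates on an edge $e'$ of $\beta$, which is therefore also dual to $H$, so both $v$ and the endpoints of $e'$ lie in the carrier $\neb(H)\subseteq\cay(X^*)$.

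Finally, by construction $\hyp$ contains a cone-vertex $c_H$ joined by a single $\hyp$--edge to every $0$--cube of $\neb(H)$. Thus $\dist_{\hyp}(v,c_H)\leqslant 1$ and $\dist_{\hyp}(c_H,v')\leqslant 1$ for each endpoint $v'$ of $e'$, giving $\dist_{\hyp}(v,v')\leqslant 2$. Swapping the roles of $\alpha$ and $\beta$ handles the symmetric inclusion. Since $\Pi$ is the identity on $0$--cubes of $\cay(X^*)$, this bounds the Hausdorff distance between $\Pi(\alpha)$ and $\Pi(\beta)$ by $2$, uniformly in $\alpha,\beta,D$. There is essentially no obstacle here: the whole content is this dual-curve observation combined with the coning construction that defines $\hyp$.
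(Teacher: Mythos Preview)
Your proof is correct and follows essentially the same approach as the paper: dual curves in a square bigon between geodesics must travel from $\alpha$ to $\beta$, so each edge of $\alpha$ shares a hyperplane-carrier with an edge of $\beta$, and the coning yields Hausdorff distance at most $2$. One tiny slip: the hyperplane $H$ lives in $\cay(X^*)$, not $\widetilde X$, since that is where the carriers are coned off in $\hyp$; this does not affect the argument.
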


\begin{proof}
Let $e$ be a $1$--cube of $\alpha$ and let $K$ be the dual curve in $D$ emanating from $e$ and mapping to a hyperplane $H$ 
of $\cay(X^*)$.  Since $\alpha$ is a geodesic, $K$ terminates at a $1$--cube $f$ of $\beta$, whence 
$\dist_\hyp(\Pi(e),\Pi(f))\leqslant2$.  Hence $\Pi(\alpha)\subseteq\neb_2(\Pi(\beta))$ and the proof is complete by 
symmetry.
\end{proof}

\begin{prop}\label{prop:hyperbolicity}
The graph $\hyp$ is hyperbolic.
\end{prop}

\begin{proof}
It suffices to prove that the $0$--skeleton of $\cay(X^*)$, with the subspace metric inherited from $\hyp$, is hyperbolic.  
First, suppose that $\alpha\beta\gamma$ is a geodesic triangle in $\cay(X^*)$.  Then Lemma~\ref{lem:square_bigon_project} 
combines with Theorem~\ref{thm:strebel_cubical_small_can} and the fact that $\Pi$ sends cones to uniformly bounded sets to 
show that each of $\Pi(\alpha),\Pi(\beta),\Pi(\gamma)$ is contained in the $\delta'$--neighborhood in $\hyp$ of the union of 
the other two, for some uniform $\delta'$.  The Guessing Geodesics Lemma~(see 
e.g.~\cite[Proposition~3.5]{Hamenstadt:gg}\cite[Proposition~3.1]{Bowditch:uniform}) now implies that $\hyp$ is hyperbolic.
\end{proof}

\begin{thm}\label{thm:general_case}
Let $\langle X\mid\{Y_i\}_{i\in\mathcal I}\rangle$ be a $C'(\frac{1}{144})$ presentation with $X$ locally finite, and let 
$G=\pi_1X^*$.  Then any $g\in G$ acting on the space $\hyp$ constructed above as a fast loxodromic element acts on $\hyp$ as 
a WPD element, whence either $G$ is virtually cyclic or acylindrically hyperbolic.
\end{thm}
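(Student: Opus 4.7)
The plan is to verify that the pair $(\hyp,\Pi)$ constructed in Section~\ref{sec:hyp} satisfies the five hypotheses laid out at the start of Section~\ref{sec:metalemma}; once that is done, Lemma~\ref{lem:acyl_version_2} yields the WPD assertion for any loxodromic $g\in G$, and the virtually-cyclic-or-acylindrically-hyperbolic dichotomy follows from Osin's theorem recalled at the end of Section~\ref{sec:background} (applied to such a $g$, when one exists).

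Conditions (\ref{item:coarse_points})--(\ref{item:equivariant}) are essentially built into the construction of Section~\ref{sec:hyp}. Since $\Pi$ is the inclusion on the $1$-skeleton and sends each cube of dimension $\geqslant 2$ to a single point of the image of its $1$-skeleton, $\Pi(x)$ is a singleton for every $x$, so (\ref{item:coarse_points}) holds with $\mathfrak e=0$. The inclusion of the $1$-skeleton into $\hyp$ is $1$-Lipschitz, giving (\ref{item:coarsely_lipschitz}). Each relator $Y_i$ is coned off in $\hyp$, so any two vertices of $Y_i$ are at $\hyp$-distance at most $2$, giving (\ref{item:cone_projection}). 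Equivariance (\ref{item:equivariant}) was imposed by construction, since $\hyp$ is built $\pi_1X^*$-equivariantly from $\widetilde X^*$ and the choices of $\Pi$ on higher-dimensional cubes were required to be equivariant.

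The only condition with any real content is (\ref{item:median_cell}), which demands that isometrically embedded cubical products have uniformly bounded image in $\hyp$. Given an embedding $\iota\colon\prod_j I_j\hookrightarrow\cay(X^*)$ with each $I_j$ of positive length (and more than one factor), and any two $0$-cubes $u,v$ in the image, I would choose hyperplanes $H_u,H_v$ of $\widetilde X$ dual to $1$-cubes at $u$ and $v$ respectively, lying in \emph{distinct} factor directions of the product; this is possible because in a non-trivial cubical product every vertex meets a $1$-cube in each factor direction of positive length. Hyperplanes in distinct factors of a cubical product necessarily cross, so $\neb(H_u)\cap\neb(H_v)$ contains a $0$-cube $w$. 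In $\hyp$ the path $u\to c_{H_u}\to w\to c_{H_v}\to v$ through the cone-points on the two hyperplane-carriers has length at most $4$, and hence $\diam(\image(\Pi\circ\iota))\leqslant 4$, so (\ref{item:median_cell}) holds with $\mathfrak e=4$.

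With all five hypotheses verified, Lemma~\ref{lem:acyl_version_2} applies directly and proves that any $g\in G$ acting loxodromically on $\hyp$ is a WPD element for this action. When at least one such loxodromic element exists (which will be supplied in the next section in the non-degenerate cases), Osin's theorem then yields the dichotomy: $G$ is virtually cyclic or acylindrically hyperbolic. I do not foresee any substantive obstacle in this argument; the theorem is essentially the capstone assembly of the technical work already carried out in Sections~\ref{sec:cubical_pres}--\ref{sec:hyp}, the main point of the present proof being merely the short hyperplane-crossing argument needed to verify condition~(\ref{item:median_cell}).
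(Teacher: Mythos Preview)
Your proof is correct and follows essentially the same route as the paper's own proof, just with more detail. The paper's argument is a two-line affair: it notes that hyperbolicity of $\hyp$ is Proposition~\ref{prop:hyperbolicity}, that the verification of conditions~(\ref{item:coarse_points})--(\ref{item:median_cell}) was already done in the paragraph introducing $\Pi$ at the start of Section~\ref{sec:hyp} (where it is simply asserted that ``cubical product subcomplexes of $\cay(X^*)$ are sent to subsets of $\hyp$ with diameter at most $4$''), and then invokes Lemma~\ref{lem:acyl_version_2} and Osin's theorem. You have unpacked the verification, in particular supplying the hyperplane-crossing argument for condition~(\ref{item:median_cell}) that the paper leaves implicit. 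Two very small points: the hyperplanes $H_u,H_v$ you pick should be hyperplanes of $\cay(X^*)$, not of $\widetilde X$ (since $\hyp$ is built by coning hyperplane-carriers in $\cay(X^*)$), and you should note that the $\delta$-hyperbolicity of $\hyp$ required by the standing hypotheses of Section~\ref{sec:metalemma} is supplied by Proposition~\ref{prop:hyperbolicity}.
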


\begin{proof}
The assertion that $g$ is a WPD element follows from Lemma~\ref{lem:acyl_version_2}; hyperbolicity of $\hyp$ comes from 
Proposition~\ref{prop:hyperbolicity}.  Applying~\cite[Theorem~1.2.($AH_3\Rightarrow AH_2$)]{Osin:acyl} completes the proof.
\end{proof}

\section{Proof of Theorem~\ref{thmi:main}}\label{sec:finding_loxodromic}
We now study the question of when $\pi_1X^*$ contains a loxodromic isometry of $\hyp$, 
using knowledge of which 
elements of $\pi_1X$ act loxodromically on the \emph{contact graph} $\contact\widetilde X$ of $\widetilde X$, which is the 
intersection graph of the set of hyperplane carriers in $\widetilde X$.

Let $p\colon\widetilde X\to\cay(X^*)$ be the universal covering map (regarding $\cay(X^*)$ as the cover of $X$ corresponding to 
the 
subgroup $K=\langle\langle\{\pi_1Y_i\}_{i\in\mathcal I}\rangle\rangle$ of $\pi_1X$).  Let $\widetilde{\hyp}$ be the graph 
obtained from $\widetilde X^{(1)}$ by coning off the $1$--skeleton of each hyperplane carrier.

Form a new graph $\widehat{\hyp}$ from $\widetilde{\hyp}$ by coning off every subgraph of $\widetilde 
X^{(1)}\subseteq\widetilde{\hyp}$ which is the $1$--skeleton of an elevation $\widetilde Y_i\hookrightarrow\widetilde X$ of 
some $Y_i\to X$.  
Observe that $p$ induces a quotient map $\hat p\colon\widehat{\hyp}\to\hyp$, which restricts to $p$ on $\widetilde X^{(1)}$ and 
which sends the cone-point $v_H$ over the hyperplane carrier $\neb(H)$ to the cone-point $v_{p(H)}$ over the 
hyperplane carrier $p(\neb(H))$.  The map $\hat p$ also sends the cone over each elevation $\widetilde Y_i$ to the cone over the 
corresponding lift $Y_i\to\widetilde X^*$ of $Y_i\to X$.

\begin{rem}[Standing assumptions]\label{rem:additional_assumptions}
 In this section, we introduce extra hypotheses on the $C'(\frac{1}{144})$ 
cubical presentation $\langle X\mid\{Y_i\}_{i\in\mathcal I}\rangle$.  First, we 
assume that $X$ is compact and that each $Y_i$ is compact, as in Theorem~\ref{thmi:main}.  Second, we 
assume that $\langle X\mid\{Y_i\}_{i\in\mathcal I}\rangle$ satisfies the 
uniform $C''(\frac{1}{144})$ condition from 
Definition~\ref{defn:metric_small_c}, again as in Theorem~\ref{thmi:main}.  
Recall that this implies that each geodesic $P$ in any piece satisfies 
$|P|<\frac{1}{144}\inf_{i\in\mathcal I}\|Y_i\|$.  In particular, there is a 
uniform bound on the lengths of such $P$.  Later, we will introduce additional conditions.

Specifically, we will replace each $Y_i$ by a finite cover $\widehat Y_i\to Y_i\to X$.  Note that the 
collection of elevations $\widetilde Y_i\to\widetilde X$, and their stabilisers in $\pi_1X$, do not depend on 
the cover $\widehat Y_i$.
\end{rem}

\subsection{Coned-off spaces}  We first relate the various coned-off spaces to the corresponding graphs in a 
standard way.

\begin{lem}\label{lem:qi_to_int_graph}
Let $\Gamma$ be the graph with a vertex for each hyperplane carrier in $\widetilde X$, and a vertex for each elevation $\widetilde Y_i\hookrightarrow\widetilde X$ of each 
$Y_i\to X$, with adjacency corresponding to intersection.  Then $\Gamma$ is $\pi_1X$--equivariantly 
quasi-isometric to $\widehat{\hyp}$ and $\widetilde{\hyp}$ is $\pi_1X$--equivariantly quasi-isometric to $\contact\widetilde 
X$.
\end{lem}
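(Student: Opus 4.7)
The plan is to verify both quasi-isometries by the standard scheme for coning-off constructions: if every vertex of a graph $Z$ lies in some element of a family $\mathcal U$ of uniformly ``nice'' subgraphs, then the graph obtained from $Z$ by coning off each $U\in\mathcal U$ is quasi-isometric to the intersection graph of $\mathcal U$. Both statements of the lemma fit this template, with $Z=\widetilde X^{(1)}$ and $\mathcal U$ being, respectively, the set of hyperplane-carriers, or the union of that set with the set of elevations $\widetilde Y_i^{(1)}$. The hypothesis that every vertex of $\widetilde X$ lies on at least one hyperplane-carrier (it lies on the carrier of any incident hyperplane) is what makes the scheme apply.

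I will do the case $\widetilde{\hyp}$ versus $\contact\widetilde X$ in detail; the argument for $\widehat{\hyp}$ versus $\Gamma$ is identical after including the extra cones. First I define $\phi\colon\contact\widetilde X\to\widetilde{\hyp}$ on vertices by $\phi([H])=v_H$, the cone-point over $\neb(H)$. If $[H],[H']$ are adjacent in $\contact\widetilde X$, then $\neb(H)\cap\neb(H')$ contains a $0$--cube $v$, and the concatenation of cone-edges $v_H\to v\to v_{H'}$ has length $2$ in $\widetilde{\hyp}$. Hence $\phi$ is $2$--Lipschitz on the $0$--skeleton. Next I define a coarse inverse $\psi\colon\widetilde{\hyp}\to\contact\widetilde X$: send each cone-point $v_H$ to $[H]$, and pick, once and for all in a $\pi_1X$--equivariant way (choose a $\pi_1X$--orbit representative of each $0$--cube of $\widetilde X$, assign it a carrier containing it, and extend by the action), a hyperplane-carrier containing each $0$--cube $v$ of $\widetilde X$, sending $v$ to its corresponding contact-graph vertex. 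Both $\phi$ and $\psi$ are $\pi_1X$--equivariant by construction.

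To see $\psi$ is coarsely Lipschitz, I examine the two kinds of edges in $\widetilde{\hyp}$. A $1$--cube $e$ of $\widetilde X$ dual to a hyperplane $H$ has both endpoints on $\neb(H)$; the hyperplanes selected for those endpoints by $\psi$ are both adjacent or equal to $[H]$ in $\contact\widetilde X$, so $\psi$ moves the endpoints of $e$ a distance at most $2$. A cone-edge from $v_H$ to a $0$--cube $v\in\neb(H)$ is sent to the pair $([H],[H'])$ where $[H']$ is the hyperplane assigned to $v$; since $\neb(H)\cap\neb(H')\ni v$ these are adjacent, so $\psi$ is again $2$--Lipschitz on this edge. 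A routine check shows $\psi\circ\phi=\mathrm{id}_{\contact\widetilde X}$ on vertices, while $\phi\circ\psi$ moves each vertex a uniformly bounded amount (at most $2$ in $\widetilde{\hyp}$, using the cone-edge $v\to v_{H}$ to the hyperplane assigned to $v$). Together with coarse surjectivity of $\phi$ (every $0$--cube of $\widetilde X$ is within one cone-edge of the image of $\phi$), this produces the desired $\pi_1X$--equivariant quasi-isometry.

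For the second quasi-isometry, I extend $\phi$ to send the $\Gamma$--vertex corresponding to an elevation $\widetilde Y_i$ to the cone-point $v_{\widetilde Y_i}$ in $\widehat{\hyp}$, and extend $\psi$ symmetrically by sending each such cone-point back to its $\Gamma$--vertex; adjacencies in $\Gamma$ involving $\widetilde Y_i$--vertices translate to distance at most $2$ in $\widehat{\hyp}$ exactly as before, because a point of intersection $\neb(H)\cap\widetilde Y_i$ or $\widetilde Y_i\cap\widetilde Y_j$ is a $0$--cube witnessing the adjacency via two cone-edges. I expect no genuine obstacle; the only mildly delicate point is ensuring the choice of a hyperplane for each $0$--cube is $\pi_1X$--equivariant, which is handled once and for all by the orbit-representative trick above. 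Local finiteness of $X$ is not needed here, only the fact that every $0$--cube lies on some hyperplane-carrier and the uniform diameter bound of $2$ produced by the cones.
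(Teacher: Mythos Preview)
Your proof is correct and follows essentially the same approach as the paper: define the map sending each intersection-graph vertex to the corresponding cone-point in the coned-off space, use that every $0$--cube lies in some hyperplane-carrier for quasi-surjectivity, and verify the quasi-isometry. The paper's proof is terser, deferring the quasi-isometric embedding check to the argument in~\cite{Hagen:quasi_arb}, whereas you spell out the coarse inverse $\psi$ and its Lipschitz bound explicitly; this extra detail is fine and matches what that reference contains.
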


\begin{proof}
Define a map $f\colon\Gamma^{(0)}\to\widehat{\hyp}$ by sending each vertex (corresponding to a hyperplane carrier or an elevation $\widetilde 
Y_i$) to the corresponding cone-point.  Since each 
point of $\widetilde X$ lies in a hyperplane carrier, the map $f$ is quasi-surjective.  If $v,w$ are vertices of $\Gamma$, 
corresponding to subcomplexes $\widetilde C_v,\widetilde C_w$, and $v,w$ are adjacent, then $\widetilde C_v\cap\widetilde 
C_w\neq\emptyset$, so $\dist_{\widehat{\hyp}}(f(v),f(w))\leqslant 2$.  Hence $f$ is coarsely Lipschitz.  By sending 
each cone-point $v$ in $\widehat{\hyp}$ to the vertex of $\Gamma$ corresponding to the subcomplex over which $v$ is the 
cone, we obtain a coarsely Lipschitz quasi-inverse for $f$, so $f$ is a quasi-isometry.  That $f$ is $\pi_1X$--equivariant 
follows immediately from the definition of $f$.  This shows that $\Gamma$ is $\pi_1X$--equivariantly 
quasi-isometric to $\widehat{\hyp}$.  The other assertion is proved in a similar manner in~\cite[Section 
5]{Hagen:quasi_arb}. 
\end{proof}

\begin{rem}[Summary of the graphs]\label{rem:graph_summary}
The various graphs are summarised below for reference:
\begin{itemize}
 \item $\contact\widetilde X$ is the contact graph of $\widetilde X$, i.e. the intersection graph of the hyperplane carriers.
 \item $\Gamma$ is the intersection graph of the family of subcomplexes of $\widetilde X$ that are either hyperplane carriers or elevations $\widetilde Y_i$ of relators.
 \item $\widetilde{\hyp}$ is the graph formed from $\widetilde X^{(1)}$ by coning off the $1$--skeleta of the various hyperplane carriers.
 \item $\widehat{\hyp}$ is the graph formed from $\widetilde{\hyp}$ by coning off the subgraphs of $\widetilde X^{(1)}\subseteq\widetilde{\hyp}$ that are $1$--skeleta of subcomplexes of the form 
$\widetilde Y_i$.
\item $\hyp$ is formed from the $1$--skeleton of $\widetilde X^*$ by coning off the $1$--skeleton of each hyperplane carrier, as in Section~\ref{sec:hyp}. 
\end{itemize}
The first and third graphs are $\pi_1X$--equivariantly quasi-isometric, and the second and fourth graphs are $\pi_1X$--equivariantly isometric.

The graph $\hyp$ is the ``main object of study'', and was chosen because it generalises the hyperbolic space used in the graphical case~\cite{GruberSisto:graphical}.  It is therefore simplest in many 
settings to work with $\widetilde{\hyp}$ and $\widehat{\hyp}$, because of the map $p$.

But in other settings, we invoke results in the literature about actions on $\contact\widetilde X$.  Moreover, for statements like Lemma~\ref{lem:loxodromic_persist_1}, it is a bit simpler to work 
with $\contact\widetilde X$ and $\Gamma$ rather than $\widetilde{\hyp}$ and $\widehat{\hyp}$.  This explains why we use all of the different graphs.  The reader is encouraged to refer to the above 
list as needed. 
\end{rem}

\subsection{When do loxodromics on $\widetilde{\hyp}$ stay loxodromic on coning?}  Since our ultimate goal is 
to understand when $\pi_1X^*$ contains a loxodromic isometry of $\hyp$, and existing tools 
(mainly from~\cite{CapraceSageev:rank_rigidity,Hagen:boundary}) tell us when elements of $\pi_1X$ act loxodromically on 
$\contact\widetilde X$, we need to relate these phenomena.

\begin{lem}[Loxodromics persist upstairs]\label{lem:loxodromic_persist_1}
Let $\tilde g\in\pi_1X$ act loxodromically on $\widetilde{\hyp}$.  Then either $\tilde g$ acts loxodromically on 
$\widehat{\hyp}$ or $\tilde g$  stabilizes some elevation $\widetilde Y_i\subseteq\widetilde X$ of 
some $Y_i\to X$.
\end{lem}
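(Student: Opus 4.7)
The plan is to prove the contrapositive: assuming $\tilde g$ is loxodromic on $\widetilde{\hyp}$ but not on $\widehat{\hyp}$, I will exhibit an elevation of some $Y_i$ stabilized by $\tilde g$ (possibly after replacing $\tilde g$ by a power and taking a finite $\tilde g$-orbit intersection, which still produces a $\tilde g$-invariant elevation). First I fix a base vertex $\tilde x\in\widetilde X^{(0)}$ on a combinatorial axis $\alpha$ of $\tilde g$ in $\widetilde X$; Lemma~\ref{lem:qi_to_int_graph} and loxodromicity on $\widetilde{\hyp}$ give $\lambda>0$ with $\dist_{\widetilde{\hyp}}(\tilde x,\tilde g^n\tilde x)\geqslant\lambda|n|$, while failure of loxodromicity on $\widehat{\hyp}$ yields $n_k\to\infty$ with $\dist_{\widehat{\hyp}}(\tilde x,\tilde g^{n_k}\tilde x)=o(n_k)$.

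Next I would \emph{un-cone} a $\widehat{\hyp}$-geodesic $\gamma_k$ from $\tilde x$ to $\tilde g^{n_k}\tilde x$: the only edges of $\widehat{\hyp}$ absent from $\widetilde{\hyp}$ are those incident to cone-vertices over elevations, so at each such cone-vertex $v_{\widetilde Y^{(j),k}}$ traversed by $\gamma_k$, replace the length-$2$ detour through $v_{\widetilde Y^{(j),k}}$ by a $\widetilde{\hyp}$-geodesic in $\widetilde Y^{(j),k}$ joining the entry vertex $a_j^k$ to the exit vertex $b_j^k$ (available by convexity of elevations). Writing $r_k\leqslant|\gamma_k|/2=o(n_k)$ for the number of elevation cone-vertices on $\gamma_k$, we obtain
\[
\lambda n_k\leqslant\dist_{\widetilde{\hyp}}(\tilde x,\tilde g^{n_k}\tilde x)\leqslant|\gamma_k|+\sum_{j=1}^{r_k}\dist_{\widetilde{\hyp}}(a_j^k,b_j^k),
\]
and pigeonhole yields an index $j(k)$ with $\dist_{\widetilde{\hyp}}(a_{j(k)}^k,b_{j(k)}^k)\to\infty$; write $\widetilde Y_k$ for the corresponding elevation.

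The crux is promoting the sequence of ``$\widetilde{\hyp}$-shortcut'' elevations $\widetilde Y_k$ to a single $\tilde g$-invariant one. Loxodromicity on $\contact\widetilde X$ identifies $\tilde g$ as a rank-one isometry of $\widetilde X$ via~\cite{Hagen:boundary}, so $\alpha$ is a rank-one axis with contracting projection. The entry and exit points of $\gamma_k$ at $\widetilde Y_k$ lie within $o(n_k)$ $\widehat{\hyp}$-steps of $\tilde x,\tilde g^{n_k}\tilde x$; I would argue via the contracting property that $\widetilde Y_k$ must coarsely contain arbitrarily long subsegments of $\alpha$, using convexity of $\widetilde Y_k$ to upgrade ``coarsely contains'' to ``contains''. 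Local finiteness of $X$ and cocompactness of $\pi_1X\curvearrowright\widetilde X$ then let me extract, after translating the $\widetilde Y_k$ by appropriate powers of $\tilde g$ and passing to a subsequence, a limit elevation $\widetilde Y_\infty$ containing all of $\alpha$. Its stabilizer in $\pi_1X$ acts cocompactly on $\widetilde Y_\infty$ and preserves $\alpha$, hence contains a nontrivial power $\tilde g^N$; replacing $\widetilde Y_\infty$ by a $\tilde g$-orbit intersection of elevations supplies the required $\tilde g$-invariant one.

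The principal obstacle is the contracting argument of the third paragraph: relating large $\widetilde{\hyp}$-distance between two vertices of a convex subcomplex $\widetilde Y_k\subset\widetilde X$ to actual $\widetilde X$-level proximity of $\widetilde Y_k$ to $\alpha$, and then parlaying this into containment of a long $\alpha$-subsegment in $\widetilde Y_k$. The first two steps are routine translation-length and pigeonhole arguments; the rank-one extraction of a $\tilde g$-invariant elevation is where the real geometric work lies.
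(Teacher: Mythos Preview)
Your first two paragraphs are fine, but the third paragraph --- which you yourself flag as the crux --- contains genuine gaps that break the argument. First, you invoke cocompactness of the $\pi_1X$-action on $\widetilde X$ and of the stabilizer on $\widetilde Y_\infty$; neither is assumed in this lemma (only local finiteness of $X$ is available here), so the limit-extraction step is unjustified. Second, the claim that the stabilizer of $\widetilde Y_\infty$ ``preserves $\alpha$'' is simply false in general, so your deduction that some $\tilde g^N$ lies in that stabilizer does not follow. Third, and most seriously, the final move of ``replacing $\widetilde Y_\infty$ by a $\tilde g$-orbit intersection of elevations'' does not produce an elevation of any $Y_i$: an intersection of distinct elevations is just a convex subcomplex, not an elevation, so you have not proved the stated conclusion.

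The missing idea is the $C'(\tfrac{1}{144})$ condition, which you never use. The paper's argument avoids all of the above by a direct bounded-projection estimate: working in the intersection graph $\Gamma$, a minimal disc diagram between the axis segment and a $\Gamma$-geodesic shows that each hyperplane-carrier $C_j$ carries at most $\kappa_0$ dual curves (by loxodromicity on $\widetilde{\hyp}$), and each relator-elevation $C_j$ carries at most $\kappa_1$ dual curves --- unless projections of $\widetilde A$ to elevations are unbounded, in which case one finds a \emph{unique} $\widetilde Y_i$ whose neighborhood contains $\widetilde A$. Uniqueness comes directly from small-cancellation (two such elevations would share an arbitrarily long cone-piece), and uniqueness plus $\tilde g\widetilde A=\widetilde A$ immediately gives $\tilde g\widetilde Y_i=\widetilde Y_i$. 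This uniqueness-from-small-cancellation step is exactly what should replace your orbit-intersection attempt; once you see it, the limit extraction and the cocompactness hypotheses become unnecessary.
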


\begin{proof}
 By Lemma~\ref{lem:qi_to_int_graph}, $\widehat{\hyp}$ is quasi-isometric to the intersection graph $\Gamma$ of the set of 
hyperplane carriers and various elevations $\widetilde Y_i$  in $\widetilde X$.  Hence it suffices 
to show that $\langle \tilde g\rangle$ acts loxodromically on $\Gamma$ provided that $\tilde g$ doesn't stabilise any 
$\widetilde Y_i$.    

\textbf{The axis:}  Let $\widetilde A\subseteq\widetilde X$ be a combinatorial geodesic axis for $\tilde g$ (by replacing 
$\widetilde X$ by its first cubical subdivision, we may assume that such an axis exists~\cite{Haglund:semisimple}).  Fix a 
$0$--cube $\tilde a\in\widetilde A$ and fix $n>0$.  Let $P$ be the subpath of $\widetilde A$ joining $\tilde a$ to $g^n\tilde 
a$.  

\textbf{The corresponding geodesic in $\Gamma$:}  Let $\widehat Q$ be a geodesic of $\Gamma$ joining vertices corresponding 
to subcomplexes containing $\tilde a$ and $g^n\tilde a$.  Let the vertex-sequence of $\widehat Q$ be $C_0,\ldots,C_N$, where 
each $C_i$ is either a hyperplane carrier in $\widetilde X$ or an elevation $\widetilde Y_i\subseteq \widetilde X$.  Then we 
have a combinatorial path $Q=\alpha_0\cdots\alpha_N$ joining $\tilde a$ to $g^n\tilde a$, where each $\alpha_i$ is a geodesic 
in $C_i$.  The closed path $QP^{-1}$ bounds a disc diagram $D\to\widetilde X$.

\begin{claim}\label{claim:good_Q}
The geodesic $\widehat Q$, and the corresponding path $Q$, can be chosen so that $Q$ is a geodesic of $\widetilde X$.     
\end{claim}
\renewcommand{\qedsymbol}{$\blacksquare$}
\begin{proof}[Proof of Claim~\ref{claim:good_Q}]
This argument is entirely about CAT(0) cube complexes (i.e. it takes place in $\widetilde X$). The argument is 
almost exactly the same as the proof of Proposition~3.1 of~\cite{BHS:HHS_I}, with one change: the argument in 
\cite{BHS:HHS_I} corresponds to the case $\mathcal I=\emptyset$, but it only uses convexity of carriers; since the 
$\widetilde Y_i$ are convex, the same argument works here.

Suppose that $\widehat Q$ and $Q$ have been chosen as above in such a way that any minimal-area disc diagram $D\to\widetilde 
X$ with boundary path $QP^{-1}$ has area that is minimal for all such choices of $\widehat Q,Q$, and $Q$ has 
no backtracks. (If there are backtracks, we can always remove them.  This does not quite make $Q$ a geodesic, 
since it can still have distinct non-consecutive edges dual to the same hyperplane.)  

Let $K$ be a dual curve in $D$ emanating from $P$.  Then $K$ ends on $Q$.  Indeed, since $P$ is a 
geodesic of $\widetilde X$, the hyperplane to which $K$ maps is dual to at most one $1$--cell of $P$.

Now let $K$ be a dual curve in $D$ emanating from $Q$.  Then $K$ emanates from some $\alpha_i$.  Suppose 
that $K$ ends on $\alpha_j$.  Since $\alpha_i$ is a geodesic, we have $j\neq i$.  If $|i-j|>2$, then since the hyperplane to 
which $K$ maps has carrier $\Gamma$--adjacent to $C_i,C_j$, we have contradicted that $\widehat Q$ is a $\Gamma$--geodesic.  

If $j=i\pm2$, then we can replace $C_{i\pm1}$ with the carrier of the hyperplane to which $K$ maps (i.e., modify $\widehat 
Q$ and $Q$), providing a new choice of $\widehat Q$ leading to a lower-area choice of $D$.

Finally, if $j=i\pm1$, then we can apply hexagon moves to show that $\alpha_i$ has a terminal segment coinciding with an 
initial segment of $\alpha_{i+1}$ (say), contradicting that $Q$ has no backtrack.

We conclude that $D$ can be chosen so that all dual curves travel from $Q$ to $P$.  Hence, $Q$ and $P$ have the same 
length, so $Q$ is a geodesic of $\widetilde X$.  
\end{proof}

Fix $\widehat Q$ and $Q$ as in Claim~\ref{claim:good_Q}.

Lemma~\ref{lem:qi_to_int_graph} provides $\lambda,\mu\geqslant 1$ such that $\Gamma$ and $\widehat{\mathcal H}$ are 
$(\lambda,\mu)$--quasi-isometric, so we have $\dist_{\widehat{\mathcal H}}(a,\tilde g^na)\geqslant |\widehat Q|/10\lambda$ for 
all sufficiently large $n$.  Hence, it suffices to bound $|\widehat Q|$ from below by a linear function of $n$ under the 
assumption that $\tilde g$ does not have a power stabilising any $\widetilde Y_i$.

Now, $|\widehat Q|=N+1\geqslant \frac{|Q|}{\max_i|\alpha_i|}$.  We now make an auxiliary 
claim: 

\begin{claim}\label{claim:stabiliseY}
Suppose that $\tilde g$ does not have a positive power that stabilises any elevation $\widetilde Y_i$ of any $Y_i$. Then 
there exists $L\geqslant 1$, depending only on $\tilde g$, such that the following holds.  Let $\widetilde B$ be either a 
hyperplane of $\widetilde X$ or an elevation of some $Y_i\to X$.  Then at most $L$ hyperplanes cross both $\widetilde B$ and 
the axis $\widetilde A$.
\end{claim}

\begin{proof}[Proof of Claim~\ref{claim:stabiliseY}]
First suppose that $\widetilde B$ is a hyperplane.  Since $\tilde g$ is loxodromic on $\widetilde{\mathcal H}$, 
Lemma~\ref{lem:qi_to_int_graph} implies that $\tilde g$ is loxodromic on $\mathcal C\widetilde X$, and the claim follows.

Suppose that for each $J\geqslant 0$, there exists $\widetilde B_J$, an elevation of some $Y_i\to X$, such that at least $J$ 
hyperplanes cross both $\widetilde B_J$ and $\widetilde A$.  

Consider the set of hyperplanes crossing $\widetilde A$.  We first observe that there exists $n_0\geqslant 1$ and  a
hyperplane $H$ crossing $\widetilde A$ such that $\langle \tilde g^{n_0}\rangle\cdot H$ is an infinite collection of disjoint 
hyperplanes.  Indeed, otherwise $\langle \tilde g\rangle$ has a bounded orbit in $\mathcal C\widetilde X$, contradicting the 
assumption that $\tilde g$ is loxodromic.

Let $J'\gg0$ be an integer to be determined.  Since $\tilde g$ is a rank-one element, Remark~\ref{rem:rank_one_hull} 
implies that there exists $J$ such that at least $J'$ elements of $\langle\tilde g^{n_0}\rangle\cdot H$ cross both 
$\widetilde A$ and $\widetilde B_J$.  By translating, we can assume that $H,\tilde g^{n_0}H,\cdots,\tilde g^{n_0(J'-1)}H$ 
cross $\widetilde B_J$.

Therefore, $\tilde g^{n_0}H,\ldots,\tilde g^{n_0(J'-1)}H,\tilde g^{n_0J'}H$ cross $\tilde g^{n_0}\widetilde B_J$.  Hence, there 
are $J'-1$ hyperplanes 
that cross both $\widetilde B_J$ and $\tilde g\widetilde B_J$ and cross the geodesic $\widetilde A$.  For $J'$ sufficiently 
large (in terms of the bound on the diameter of cone-pieces provided by the uniform small-cancellation condition, as in 
Remark~\ref{rem:additional_assumptions}), this gives a contradiction unless $\tilde g^{n_0}\widetilde B_J=\widetilde B_J$.  
This 
proves the claim.
\end{proof}

Suppose that no positive power of $\tilde g$ stabilises an elevation of a relator.

Letting $L$ be the constant from Claim~\ref{claim:stabiliseY}.  Our choice of $Q$ guarantees that each hyperplane crossing 
each $\alpha_i$ crosses both the subcomplex $C_i$ and the axis $\widetilde A$.  Hence Claim~\ref{claim:stabiliseY} implies 
that $|\alpha_i|\leqslant L$.  Thus, $|\widehat Q|\geqslant L^{-1}|Q|=L^{-1}\dist_{\widetilde X}(a,\tilde g^na)$ (and $L$ is 
independent of $n$).  Thus, $\dist_{\widehat{\mathcal H}}(a,\tilde g^na)\geqslant(10\lambda L)^{-1}\dist_{\widetilde X}(a,\tilde 
g^na)$.  Since the latter quantity is bounded below by a linear function of $n$ (because $\tilde g$ acts hyperbolically on 
$\widetilde X$), so is the former.  So, $\tilde g$ is loxodromic on $\widehat{\mathcal H}$.

Finally, if for some $n>0$ we have $\tilde g^n\in\stabilizer_{\pi_1X}(\widetilde Y_i)$, then let $\widetilde A$ 
be an axis for $\tilde g$.  So $\widetilde A$ lies in a regular neighbourhood of both $\widetilde Y_i$ and 
$\tilde g\widetilde Y_i$, leading to impossibly large pieces unless $\tilde 
g\in\stabilizer_{\pi_1X}(\widetilde Y_i)$.
\renewcommand{\qedsymbol}{$\Box$}
\end{proof}

\subsection{Asystolicity and embeddability}  We now consider two properties of lifts of an element of 
$\pi_1X^*$ to $\pi_1X$,  along the homomorphism $\pi_1X\to \pi_1X^*$.  \emph{Embeddability} of a lift guarantees 
that its image has infinite order.  \emph{Asystolicity} is stronger and more concrete.

\begin{defn}[Embeddable, asystolic]\label{defn:short_embeddable_asystolic}
Fix $g\in\pi_1X^*$.  Any $\tilde g\in\pi_1X$ mapping to $g$ is a \emph{lift} of $g$.  Let $\widetilde A$ be a combinatorial 
geodesic axis for $\tilde g$, which exists provided $\tilde g\ne1$, since $\pi_1X$ is torsion-free and isometries of 
$\widetilde X$ are combinatorially semisimple~\cite{Haglund:semisimple}. Recall that $p\colon \widetilde X\to\cay(X^*)$ denotes 
the universal covering map.  Let $A$ denote $p(\widetilde A)$.

\begin{enumerate}
 \item $\widetilde A$ is an \emph{embeddable axis} if $p$ restricts to a cubical isomorphism $\widetilde A\to A$, i.e. if $A$ is 
an embedded combinatorial line in $\cay(X^*)$.

\item We say that $\tilde g$ is an \emph{embeddable lift} of $g$ if $\tilde g$ has \textbf{at least one} 
embeddable axis.
\end{enumerate} 

Now fix $\lambda\in[0,1)$.  

\begin{enumerate}
\setcounter{enumi}{2}
\item We say that $\widetilde A$ is a \emph{$\lambda$--asystolic axis} 
if for each subpath $P$ of $\widetilde A$ such that $P\subseteq\widetilde Y_i$, where $\widetilde Y_i$ is an elevation of a 
relator $Y_i$, we have $|\widetilde P|<\lambda\|Y_i\|$.

\item The lift $\tilde g$ is \emph{$\lambda$--asystolic} if \textbf{every} combinatorial geodesic axis of $\tilde g$ is a 
$\lambda$--asystolic axis.
\end{enumerate}
\end{defn}

Note that if $\tilde g$ is an embeddable lift, then $g$ has infinite order, and $\tilde g^n$ is an embeddable lift of $g^n$ for all $n>0$.  Indeed, if $\widetilde A$ is an embeddable axis for 
$\tilde g$, it is 
also an embeddable axis for $\tilde g^n$.  Note that if $\tilde g$ 
is an $\lambda$--asystolic lift of $g$, then 
$\tilde g^n$ is an $\lambda$--asystolic lift of $g^n$ for all $n>0$.

\begin{lem}[Embeddability from $\frac{35}{72}$--asystolicity]\label{lem:characterising_a_shortest}
Let $\tilde g\in\pi_1X$ and let $g$ be its image in $\pi_1X^*$.  

Suppose that $\widetilde A$ is a $\frac{35}{72}$--asystolic axis for $\tilde g$.  Then $\widetilde A$ is an embeddable axis for $\tilde g$, so $\tilde g$ is an embeddable lift of $g^n$ for all 
$n\in\integers-\{0\}$, and $g$ has infinite order.
\end{lem}

\begin{proof}
It suffices to prove the claim for $n=1$. Let $\widetilde A$ be a combinatorial geodesic axis for $\tilde g$, and suppose that $\widetilde A$ is $\frac{35}{72}$--asystolic. 

Suppose that $\widetilde A$ is not an embeddable axis.  Then $p\colon \widetilde A\to A$ is not injective, so there 
exist distinct 
$0$--cubes $\tilde y,\tilde y'\in\widetilde A$ such that $p(\tilde y)=p(\tilde y')$.  In other words, letting $\widetilde P$ 
be the subpath of $\widetilde A$ joining $\tilde y$ to $\tilde y'$, the path $P=p\circ \widetilde P$ in $\cay(X^*)$ is a 
nontrivial closed path.  Let $D\to\widetilde X^*$ be a minimal-complexity disc diagram with boundary path $P$.  

\textbf{No spurs:}  We claim that $P$ has no spurs.  Indeed, if $P$ has a spur $ee^{-1}$, then $\widetilde P$ 
contains a subpath $\tilde 
e_1\tilde e_2$, where $\tilde e_1,\tilde e_2$ are distinct $1$--cubes such that $\tilde e_1\cap\tilde e_2$ is a $0$--cube 
and $p(\tilde e_1)=p(\tilde e_2)^{-1}$.  
By choosing $\tilde y,\tilde y'$ as close as possible, we can assume that the endpoints of $\tilde e_1\tilde e_2$ are 
$\tilde y,\tilde y'$, and $\tilde e_1,\tilde e_2$ are lifts of the $1$--cube $e$.  Hence $\tilde y=h\tilde y'$ for some 
nontrivial $h\in\kernel(\pi_1X\to\pi_1X^*)$, so $h$ fixes the $0$--cube  $\tilde e_1\cap\tilde e_2$.  This contradicts that 
$\pi_1X$ acts on $\widetilde X$ freely.  

\textbf{Applying diagram trichotomy:}  Hence Theorem~\ref{thm:Greendlinger} implies that $D$ is one of the 
following:
\begin{itemize}
     \item A single vertex.  This is impossible since $P$ is nontrivial.
     \item A single cone-cell.  In this case, $P$ is an essential path in a relator $Y_i$, by minimality of the complexity.  
Hence, $|\widetilde P|\geqslant\|Y_i\|$, contradicting asystolicity.
     \item A ladder, or a diagram with at least three features of positive curvature (shells or generalised corners).
\end{itemize}

In either of the latter two cases, there are at least two features of positive curvature, and neither is a 
spur.  

\textbf{No shells:}  Suppose 
that $C$ is a positively-curved shell in $D$ with boundary path $OI$, with $O$ the outer path and $I$ the inner path.  Let 
$Y_i$ be the 
relator to which the path $\boundary_pC$ maps.  Let $I'$ be the shortest path in $Y_i$ that is square-homotopic in $Y_i$ (relative to its endpoints) to $I$.  By short inner paths 
(Theorem~\ref{thm:short_inner_paths}), $|I'|<|O|$.  Now, $O$ lifts to a subpath of 
$\widetilde P$ lying in an elevation $\widetilde Y_i$ of $Y_i$, so by asystolicity, $|O|<\|Y_i\|/2$.  So $|I'O|<\|Y_i\|$.  
Hence $I'O$ is inessential in $Y_i$.  Since $I,I'$ are square-homotopic in $Y_i$, we have that $IO=\boundary_pC$ is also 
inessential, so we can replace $C$ by a square diagram to reduce the complexity.  Thus, the shell $C$ cannot exist.

\textbf{Shuffling generalised corners to the boundary:} Hence every feature of positive curvature along 
$\boundary_pD$ is a generalised corner of a square.  By shuffling --- see 
Remark~\ref{defn:pushing_to_boundary} --- we can assume that these are exposed squares, i.e. each generalised corner of a 
square along $P$ is actually a length--$2$ subpath of the boundary path of a square.

\textbf{Square-homotoping $P$:}  Since there are at least two of these squares, at least one, denoted $s$, 
satisfies the 
following: $\boundary_ps=Ief$, where $ef$ is a subpath of 
$\boundary_pD$ and the vertex in which $e,f$ intersect is not $p(\tilde y)$.  Hence we can perform a square homotopy, 
removing $s$ from $D$, to obtain a new diagram $D'$ in which $ef$ is replaced by $I$ in the boundary path.  Note that 
$|\boundary_pD'|=|P|$, and $p(\tilde y)\in\boundary_pD'$.

Thus we can replace $\widetilde A$ by a $\langle \tilde g\rangle$--invariant geodesic $\widetilde A'$ as follows: lift $ef$ 
to a path $\tilde e\tilde f$ in $\widetilde P$, lift $s$ to a square $\tilde s$ meeting $\widetilde A$ in the subpath 
$\tilde e\tilde f$, and homotop $\widetilde A$ across $\tilde s$.  Do the same at each $\langle \tilde g\rangle$--translate 
of $\tilde s$.  Let $\widetilde P'$ be the subpath of $\widetilde A'$ from $\tilde y$ to $\tilde y'$.

Note that $\widetilde P'$ projects to a closed path $P$ in $\widetilde X^*$ bounding a proper subdiagram of 
$D$.  Repeating finitely many times, we find a subdiagram of $D$ that has at least two features of positive curvature (it 
has at least three, or is a ladder), none of which is a spur and at most one of which is a generalised corner (whose 
boundary path contains $y$).  So, there is some positively-curved shell in this subdiagram with outer path $O$.  By short 
inner paths (Theorem~\ref{thm:short_inner_paths}), $|O|>\|Y_i\|/2$.

Hence $\widetilde A$ is square-homotopic in $\widetilde X$ 
to a $\langle \tilde g\rangle$--invariant combinatorial geodesic $\widetilde B$ such that $\widetilde B$ 
contains a path $\widetilde O$ such that $\widetilde O$ lies in some $\widetilde Y_i$ and satisfies 
$|\widetilde O|>\|Y_i\|/2$.  Moreover, $\tilde y,\tilde 
y'\in\widetilde B$, and $\widetilde O$ is a subpath of the subpath $\widetilde Q$ of $\widetilde B$ joining $\tilde y$ to 
$\tilde y'$.  

Hence there exists a subpath $\widetilde Q_1$ of $\widetilde B$ with the following properties:
\begin{itemize}
     \item the path $\widetilde Q_1$ contains a subpath $\widetilde O_1$ that lies in $\widetilde Y_i$ and is maximal with 
that property;
\item we have $|\widetilde O_1|>\|Y_i\|/2$;
\item either $\widetilde O_1$ is unbounded, or $\widetilde Q_1$ starts and ends on $\widetilde A$.
\end{itemize}

We consider two cases.

\textbf{$\widetilde O_1$ bounded:}  First suppose that $\widetilde O_1$ is bounded and let $\widetilde P_1$ be the subpath 
of $\widetilde A$ subtended by the endpoints of $\widetilde Q_1$.

Consider the geodesic bigon $\widetilde Q_1\widetilde P^{-1}_1$ in $\widetilde X$.  Let $E\to\widetilde X$ be a minimal-area 
disc diagram with $\boundary_pE=\widetilde Q_1\widetilde P^{-1}_1$.  Moreover, since $\widetilde Y_i$ is convex, we make our 
choice allowing the geodesic $\widetilde O_1$ to vary, fixing the endpoints; any such geodesic lies in $\widetilde Y_i$.  In 
particular, if $E$ is chosen to be of minimal area among all disc diagrams with the given boundary path (with $\widetilde 
O_1$ allowed to vary as above), then no two dual curves emanating from $\widetilde O_1$ can cross.  

If $\widetilde Q_1=\widetilde O_1$, then $\widetilde P_1$ lies in $\widetilde Y_i$, and, since $|\widetilde P_1|=|\widetilde 
Q_1|>\|Y_i\|/2$, this contradicts our hypotheses.    

Now write $\widetilde Q_1=U\widetilde O_1V$, with at least one of $U,V$ a nontrivial path.  We now allow $U,V$ to vary, 
fixing their endpoints, and assume that $E$ had minimal area over all of these choices.  Hence no two dual curves emanating 
from $U$ can cross, and the same is true of $V$.

Hence consider $1$--cubes $r,s$ immediately preceding and succeeding $\widetilde O_1$ in $\widetilde Q_1$.  At least one of 
$r$ or $s$ exists; assume it is $r$.  If the hyperplane $H_r$ dual to $r$ crosses $\widetilde Y_i$, then convexity of 
$\widetilde Y_i$ implies $r\subseteq\widetilde Y_i$, contradicting maximality of $\widetilde O_1$.  (Indeed, considering the 
gate map to $\widetilde O_1$ shows that $r$ must project to a $1$--cube parallel to $r$, since $H_r$ crosses $\widetilde 
Y_i$.  On the other hand, the initial point of $r$ is already in $\widetilde Y_i$, and thus sent to itself.  Thus $r$ is 
equal to its image under the gate map, so $r\subseteq\widetilde Y_i$.)

Hence, $H_r$ does not cross $\widetilde Y_i$. 

Let $K_r$ be the dual curve in $E$ dual to $r$.  Let $K$ be a dual curve emanating from $\widetilde O_1$.  Let $L$ be a dual 
curve crossing $K$.  Then $L$ cannot cross $\widetilde O_1$.  If $L$ crosses $U$, then either $L=K_r$, or $L$ is separated 
in $E$ from $\widetilde O_1$ by $K_r$, so $K$ must cross $K_r$.  Hence, if $K$ is a dual curve emanating from $\widetilde 
O_1$ and having positive length, then $K$ crosses $K_r$ or (by a symmetric argument) the dual curve $K_s$ emanating from the 
$1$--cube of $V$ following $\widetilde O_1$.

Now, $H_r$ doesn't cross $\widetilde Y_i$, so each dual curve starting at $\widetilde O_1$ and crossing $K_r$ contributes to 
the length of a wall-piece in $\widetilde Y_i$.  Hence there is a constant $M$, depending only on the uniform 
$C''(\frac{1}{144})$ condition, such that $K_r$ can cross at most $M$ dual curves.  Similarly, $K_s$ crosses at 
most $M$ dual 
curves.  So, at most $2M$ of the dual curves $K$ emanating from $\widetilde O_1$ have positive length.  

Hence $\widetilde O_1,\widetilde P_1$ have a common subpath of length greater than $\|Y_i\|/2-2M$.  Since $M<\|Y_i\|/144$, 
we conclude that $\widetilde P_1$, and hence $\widetilde A$, has a subpath that lies in $\widetilde Y_i$ and has length more 
than $35\|Y_i\|/72$, a contradiction.

\textbf{$\widetilde O_1$ unbounded:}  The remaining case is where $\widetilde O_1$ is unbounded.  In other words, 
$\widetilde Y_i$ contains a sub-ray of the axis 
$\widetilde B$ of $\langle\tilde g\rangle$.  This implies that some power of $\tilde g$ stabilises 
$\widetilde Y_i$, so $\widetilde B\subseteq\widetilde Y_i$.

Let $\widetilde A_n$ be the subpath of $\widetilde A$ between $\tilde y$ and $\tilde g^n\tilde y$, let $U,U'$ be geodesics 
joining $\tilde y,\tilde g^n\tilde y$ to closest $0$--cubes of $\widetilde Y_i$, and let $V$ be a geodesic of $\widetilde 
Y_i$ joining the terminal points of $U,U'$.  Let $F\to\widetilde X$ be a minimal-area disc diagram bounded by the paths 
$\widetilde A_n, U,U',V$.  Then, by allowing $U,U',V$ to vary, fixing their endpoints, and assuming that $D$ is minimal over 
all such choices, we have that no two dual curves emanating from $V$ can cross.  Now, the number of dual curves intersecting 
$U,U'$ is bounded independently of $n$, since $\widetilde A$ lies in a uniform neighbourhood of $\widetilde Y_i$.  Hence, 
when $n$ is sufficiently large, we see that either some dual curve travelling from $\widetilde A_n$ to $V$ has length $0$, 
or some hyperplane $H$ crosses $U$ and $U'$.

In the former case, $\widetilde A$ contains a point of $\widetilde Y_i$.  It follows by convexity of $\widetilde Y_i$ 
that $\widetilde A\subseteq\widetilde Y_i$, contradicting asystolicity.    If the former case does not hold for any $n$, 
then there is a hyperplane $H$ separating $\widetilde A$ from $\widetilde Y_i$.  The hyperplane $H$ does not cross 
$\widetilde Y_i$, but every hyperplane crossing $\widetilde A$ crosses $\widetilde Y_i$ and $H$.  Hence $\widetilde Y_i$ 
contains arbitrarily large wall-pieces, a contradiction.

\textbf{Conclusion:}  We have shown that, if $\widetilde A$ contains no subpath of any $\widetilde Y_i$ of length more than 
$35\|Y_i\|/72$, then $\widetilde A$ is an embeddable axis and so $\tilde g$ is an embeddable lift of $g$.  In particular, $g$ has infinite order.
\end{proof}

\subsection{Finding fast loxodromics on $\hyp$}  We are now ready for our main technical lemma, which explains 
how to identify when an element of $\pi_1X$ that is loxodromic 
on the contact graph survives in $\pi_1X^*$ as an element that is loxodromic on $\hyp$.

\begin{lem}[Asystolic loxodromics 
persist]\label{lem:loxodromics_persist_2}
Let $g\in\pi_1X^*$.  Suppose that $\tilde g$ is a lift of $g$ admitting a $\frac{35}{72}$--asystolic axis.  Suppose that $\tilde g$ is 
loxodromic on $\widetilde{\hyp}$.  Then $g$ is loxodromic on $\hyp$.  


Now suppose that $\tilde g$ is a lift of $g$ with a $\frac{17}{36}$--asystolic axis, and that $\tilde g$ is loxodromic on $\widetilde{\hyp}$.  Then $g$ acts on $\hyp$ as a loxodromic WPD 
element.  
\end{lem}

\begin{outline}
Since the proof of Lemma \ref{lem:loxodromics_persist_2} is rather long, we outline the steps.
\end{outline}
\begin{itemize}
     \item Using asystolicity and Lemma~\ref{lem:characterising_a_shortest}, $g$ 
has an embedded axis $A$ in $\cay(X^*)$ that is the image of the asystolic axis $\widetilde A$ of $\tilde g$.
     \item Fixing $a\in A$, we consider the subpath $A_n$ of $A$ from $a$ to $g^na$.  We also consider the following path 
$Q$ from $a$ to $g^na$: choose a sequence $C_0,\ldots,C_N$ of relators or hyperplane-carriers in $\widetilde X^*$ so that 
successive ones intersect, the first and last contain $a,g^na$, and $N$ is as small as possible.  We choose 
a path $Q\to\widetilde X^*$ from $a$ to $g^na$ that is a concatenation of paths in the 
various $C_i$.

    \item To prove that $g$ is loxodromic, we need to show that $N$ grows linearly in $n$.  This is Claim~\ref{claim:g_loxo}. 
 The idea is that for suitable disc diagrams $D\to\widetilde X^*$ bounded by $A_n$ and $Q$, there cannot be any cone-cells 
in $D$.  Hence $D$ factors through $\cay(X^*)$ and hence lifts to $\widetilde X$.  The diagram $D\to\widetilde X$ is 
bounded by a subpath of an axis $\widetilde A$ of $\tilde g$, and a lift $\widetilde Q$ of $Q$, which is a concatenation of 
lifts of the $\alpha_i$.  So, there is a sequence of vertices in $\Gamma$ joining the endpoints of $\widetilde A_n$ and 
having $n$ vertices.  This means that a linear function of $N$ bounds from above the length of $\widetilde A_n$, as 
measured in $\widehat{\hyp}$, which in turn grows linearly in $n$ by Lemma~\ref{lem:loxodromic_persist_1}. Hence $N$ must 
grow linearly in $n$.

\item Thus, it suffices to show that $Q$ can be chosen so that $D$ lifts.  To do this, we choose $C_0,\ldots,C_N$, the path 
$Q$, and $D$ so that the complexity of $D$ is minimal over all such choices.  We decompose $D$ into three subdiagrams 
$D',F,D''_1$ as follows.  First, we perform square homotopies in $D$ to replace $A_n$ by a path $P$, of the same length 
(hence also lifting to a geodesic in $\widetilde X$).  The part between $A_n$ and $P$ is $D'$.  In 
Claims~\ref{claim:D''_ij},\ref{claim:D''_no_shell},\ref{claim:D''_no_shell_2}, we show that the remaining subdiagram has no 
positively-curved shells.  Then we square-homotop $Q$ to a path $Q_1$; the subdiagram between these paths is a 
square diagram $F$, and the 
remaining part is $D''_1$.

\item We then invoke the ladder theorem, Theorem~\ref{thm:ladder_thm}, to show that $D''_1$ is a ladder, in Claim~\ref{claim:extracting_a_ladder}.  The 
idea is that minimality of $N$ means that any positively-curved shell in $D''_1$ is the concatenation of at most $5$ pieces 
between the shell and the various relators/hyperplanes $C_i$.  This contradicts our small-cancellation assumption, via 
the short inner paths property. 

\item Using asystolicity --- via the auxiliary Claim~\ref{claim:bounded_coarse_intersection_with_relators} --- we promote 
the statement that $D_1''$ is a ladder to the statement that $D_1''$ is a square diagram.  The idea is that any cone-cell in 
$D_1''$ has boundary path consisting of at most $5$ pieces on the $Q$ side (as above), $2$ pieces on the incident 
pseudorectangles, and a subpath on the $P$ side that must be short (this uses asystolicity).  Since $D_1'',F,D'$ are all 
square diagrams, so is $D$, which is what we needed.

\item It remains to show that $g$ is fast.  Here the argument is similar but easier: we form $D$ as above, except that $Q$ 
is replaced by a $\cay(\widetilde X^*)$--geodesic $S$ from $a$ to $g^na$.  The diagram $D$ is now bounded by $A_n$ and $S$.  
We again square-homotop $A_n$ and $S$ to geodesics, decomposing $D$ as the union of two square diagrams and a third 
``central'' diagram.  Much as before, the central diagram has to be a ladder.  This is seen more easily than in 
Claim~\ref{claim:extracting_a_ladder}, because $S$ is a geodesic and Theorem~\ref{thm:short_inner_paths} takes care of 
positively-curved shells for free.  Once we know this diagram is a ladder, two applications of asystolicity (again using 
Claim~\ref{claim:bounded_coarse_intersection_with_relators}) show that $g$ is fast.

\item We conclude using Lemma~\ref{lem:acyl_version_2}.
\end{itemize}

\begin{proof}[Proof of Lemma~\ref{lem:loxodromics_persist_2}]
The proof has several parts.  Since $\frac{17}{36}$--asystolicity implies $\frac{35}{72}$--asystolicity, we will assume   
$\frac{35}{72}$--asystolicity for the purpose of showing that $g$ is loxodromic, and $\frac{17}{36}$--asystolicity only for 
the purpose of showing that $g$ is fast (recall Definition~\ref{defn:fast_loxodromic}).

\textbf{Embeddability:}  Fix a $\frac{35}{72}$--asystolic axis $\widetilde A$ for $\tilde g$.  Lemma~\ref{lem:characterising_a_shortest} implies that $\widetilde A$ is an embeddable axis, so $\tilde 
g$ is an embeddable lift and hence $g$ has infinite order.  Hence, because each relator is compact, no 
lift of $g$ has a positive power that stabilises an elevation of a relator.  Since $\tilde g$ is loxodromic on 
$\widetilde{\hyp}$, Lemma~\ref{lem:loxodromic_persist_1} ensures that $\tilde g$ (which is necessarily rank one and has 
no power stabilising a hyperplane) is loxodromic on $\widehat{\hyp}$.  

\textbf{Bounded coarse intersections with hyperplanes:} Since $\tilde g$ is loxodromic on 
$\widetilde{\hyp}$, there exists $\mathfrak p_g<\infty$ such that for all hyperplanes $\widetilde H$, we have 
$\diam(\gate_{\neb{(\widetilde H)}}(\widetilde A))\leqslant \mathfrak p_g$.  In other words, at most $\mathfrak p_g$ of the hyperplanes crossing $\widetilde A$ can cross 
$\widetilde H$.

\textbf{Bounded coarse intersections with elevations of relators:}  Let $M$ be 
the upper bound on diameters of pieces, i.e. $M=\frac{1}{144}\inf_i\|Y_i\|$.  We need the following claim:

\begin{claim}\label{claim:bounded_coarse_intersection_with_relators}
There exists $\mathfrak q_g<\infty$ such that, for all subcomplexes $\widetilde Y_i$ that are lifts of 
relators to $\widetilde X$, $$\diam(\gate_{\widetilde Y_i}(\widetilde 
A))<\mathfrak q_g,$$ and $$\diam(\gate_{\widetilde Y_i}(\widetilde 
A))<\frac{1}{2}\|Y_i\|.$$     

Moreover, under the additional $\frac{17}{36}$--asystolicity assumption, $\diam(\gate_{\widetilde 
Y_i}(\widetilde A))<\frac{35}{72}\|Y_i\|.$
\end{claim}
\renewcommand{\qedsymbol}{$\blacksquare$}
\begin{proof}[Proof of Claim~\ref{claim:bounded_coarse_intersection_with_relators}]
Let $\tau_{\tilde g}$ be the combinatorial translation length of $\tilde g$.  Fix a vertex $\tilde a\in\widetilde A$.

\emph{Absolute bound on subpaths of $\widetilde A$ in relators:}  First, we will show that there exists $\mathfrak q_g'$ 
such that $|\widetilde P|\leqslant\mathfrak q'_g$ whenever $\widetilde P$ is a subpath of $\widetilde A$ that lies in some 
$\widetilde Y_i$.  

Suppose not.  Then for all $N\in\naturals$, there 
exists $\widetilde Y^N$ (an elevation of a relator $Y_{i_N}$) and a subpath $\widetilde P_N$ of $\widetilde A$ such that 
$\widetilde P_N$ lies in $\widetilde Y^N$ and has length more than $N$.  By applying powers of $\tilde g$, we can assume 
that $\widetilde P_N$ joins $\tilde a$ to $\tilde g^{k_N}\tilde a$, where $k_N\geqslant 
N/\tau_{\tilde g}-1$.

Hence the subpath $\widetilde Q_N$ of $\widetilde P_N$ joining $\tilde g\tilde a$ to $\tilde g^{k_N-1}\tilde a$ lies in 
$\widetilde Y^N\cap\tilde g\widetilde Y^N$.  Thus either $\widetilde Y^N=\tilde g\widetilde Y^N$, or $\widetilde Q_N\to Y^N$ 
is a piece. As mentioned above, $\tilde g$ cannot stabilise an elevation of a relator.  So $\widetilde Q_N$ is a piece of length at least 
$N-2\tau_g$.  For $N>M+2\tau_g$, this is a contradiction.  We conclude 
that there must exist $\mathfrak q'_g$ with the claimed property.

\emph{Relative bound on subpaths of $\widetilde A$ in relators:}  Second, just by asystolicity, if $\widetilde P$ is a 
subpath of $\widetilde A$ lying in some $\widetilde Y_i$, we have $|\widetilde P|<\frac{35}{72}\|Y_i\|$.  (Or $|\widetilde 
P|<\frac{17}{36}\|Y_i\|$ under the stronger of the two asystolicity assumptions.)

\emph{Absolute bound on projection of $\widetilde A$ to relators:}  Fix $\widetilde Y_i$.  Let $R$ be a geodesic in 
$\gate_{\widetilde Y_i}(\widetilde A)$; so, the hyperplanes intersecting $R$ all intersect $\widetilde A$ and $\widetilde 
Y_i$.  Initially, we choose $R$ to be the image of some geodesic $R'$ in $\widetilde A$ joining two $0$--cubes $y,y'$.  So, 
$R$ joins $\gate_{\widetilde Y_i}(y),\gate_{\widetilde Y_i}(y')$. 

Next, let $\beta,\beta'$ be geodesics joining $y,\gate_{\widetilde Y_i}(y)$ and 
$y',\gate_{\widetilde Y_i}(y')$ respectively.  Let $D\to\widetilde X$ be a 
minimal-area disc diagram bounded by $\beta,\beta',R,R'$.  We allow 
$\beta,\beta',R$ to vary among geodesics with the given endpoints; varying such 
a geodesic does not change the hyperplanes it crosses.  We make all such 
choices so that, among them, the resulting $D$ has minimal area.  (We emphasise 
that we do not allow $R'$ to vary, since we need it to be a subpath of 
$\widetilde A$.)  See Figure~\ref{fig:cat0_diagram}.

\begin{figure}[h]
\begin{overpic}[width=0.6\textwidth]{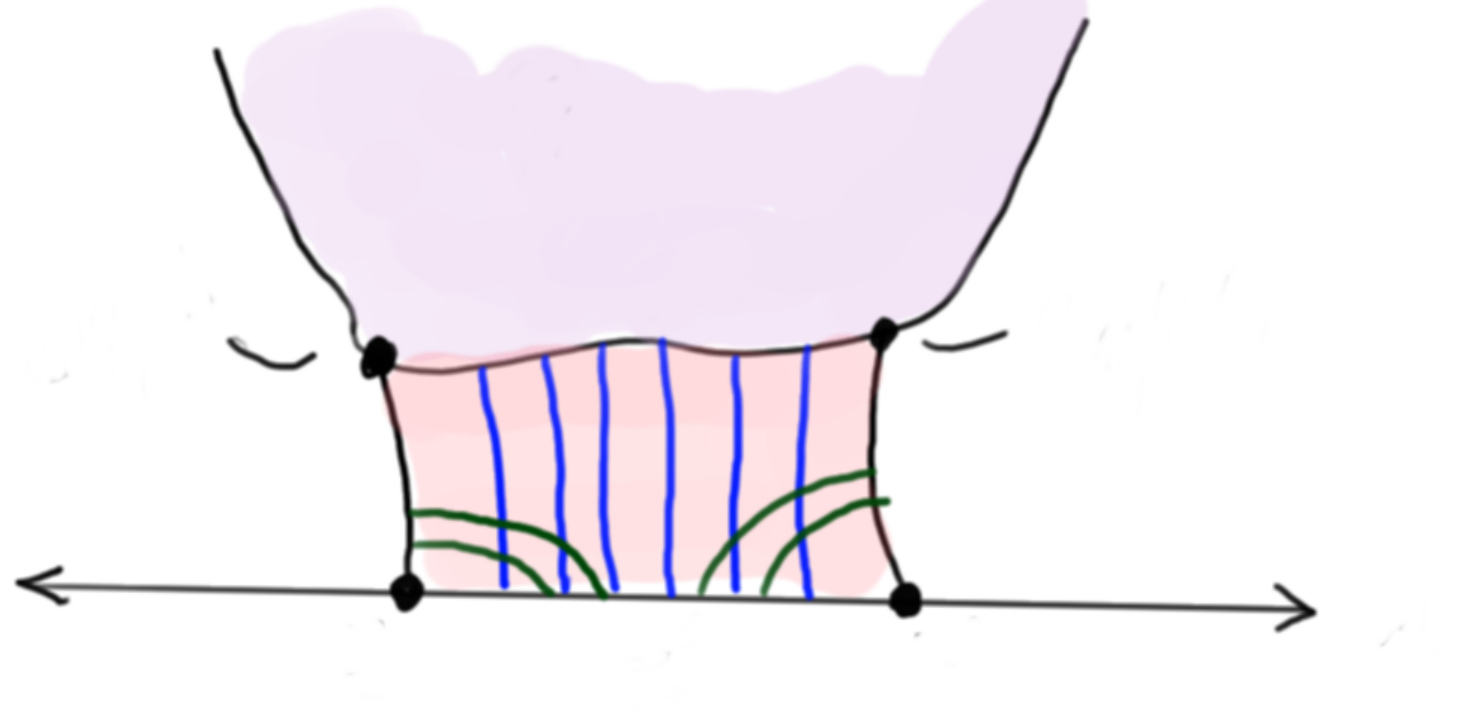}
     \put(46,26){$R$}
     \put(45,4){$R'$}
     \put(26,4){$y$}
     \put(63,3){$y'$}
     \put(45,40){$\widetilde Y_i$}
     \put(8,27){$\gate_{\widetilde Y_i}(y)$}
     \put(70,27){$\gate_{\widetilde Y_i}(y')$}
     \put(95,5){$\widetilde A$}
\end{overpic}

\caption{The diagram in $\widetilde X$ bounded by $R,R',\beta,\beta'$, along with some allowable dual 
curves.  At least $|R'|-2M$ of the vertical dual curves do not cross any other dual curve, and thus 
have length $0$.  Hence, for each $\widetilde Y_i$, at most $\|Y_i\|/2$ hyperplanes cross both 
$\widetilde A$ and $\widetilde Y_i$ in $\widetilde X$.}\label{fig:cat0_diagram}
\end{figure}

Let $K,K'$ be dual curves in $D$ emanating from $\beta$.  Then minimality of 
area ensures that $K,K'$ do not cross.  The same holds with $\beta$ replaced 
by $\beta'$ or $R$.  Next, let $K$ be a dual curve emanating from $R$.  The 
hyperplane to which $K$ maps must not separate $y,\gate_{\widetilde Y_i}(y)$, 
since it crosses $\widetilde Y_i$, so $K$ cannot cross $\beta$.  Similarly, $K$ 
cannot cross $\beta'$.  So, every dual curve emanating from $R$ terminates on 
$R'$.

Hence every dual curve $K$ travels from $R$ to $R'$ or from $R'$ to $\beta$ or 
$\beta'$, or from $\beta$ to $\beta'$.  

If some dual curve $K$ crosses $\beta$ and $\beta'$, then $K$ maps to a hyperplane $H$ that does not cross $\widetilde Y_i$ 
(since it separates $y,\gate_{\widetilde Y_i}(y)$) and crosses every hyperplane crossing both $R,R'$.  Hence $R'$ is a 
wall-piece, and we get $|R'|\leqslant M$.  So, either we are done, or no such $K$ exists.  Assume the latter.

Now, if $K$ travels from $\beta$ to 
$R'$, then $K$ maps to a hyperplane $H$ not crossing $\widetilde Y_i$ (since $H$ 
separates $\gate_{\widetilde Y_i}(y)$ from $y$), so it yields a wall-piece in 
$\widetilde Y_i$ and hence crosses at most $M$ of the dual curves traveling 
from $R$ to $R'$.  The same holds for dual curves traveling from $\beta'$ to 
$R'$  or $\beta$ to $\beta'$.  Hence there are at least $|R'|-2M$ dual curves that travel from $R$ to 
$R'$ and do not cross any other dual curves.  These dual curves thus have 
length $0$, so $R,R'$ have a common subpath of length at least $|R'|-2M$.  The preceding discussion shows that  $|R'|-2M < 
\mathfrak q'_g$, so 
$|R'|<\mathfrak q'_g+2M$.  Taking $\mathfrak q_g=\mathfrak q'_g+2M$ therefore suffices.

\emph{Relative bound:}  The preceding  argument also shows that $|R'|< \frac{35}{72}\|Y_i\|+2M$ (under 
$\frac{35}{72}$--asystolicity) or $|R'|<\frac{17}{36}\|Y_i\|+2M$ (under $\frac{17}{36}$--asystolicity).  Hence, 
under the weaker assumption, we get $|R'|< \|Y_i\|/2$, since $144M<\inf_j\|Y_j\|$.  Under the stronger assumption, we get 
$|R'|<\frac{35}{72}\|Y_i\|$.  This completes the proof of the claim.  \footnote{Since there is no uniform bound on the 
systoles of the relators, the second part of Claim~\ref{claim:bounded_coarse_intersection_with_relators} is insufficient to 
give the bound $\mathfrak q_g$, because $\widetilde A$ can pass through infinitely many orbits of elevations of 
relators.  We use the bound $\|Y_i\|/2$ to prove that $g$ is loxodromic, and the bound $\frac{35}{72}\|Y_i\|$ for 
proving that $g$ is fast.}
\end{proof}

Now we resume the proof of the lemma.  

\textbf{The path $A\to\cay(X^*)$:}  Let $\tilde a\in\widetilde A$ be a 
$0$--cube.   Let $a=p(\tilde a)$, so that 
$g^na=p(\tilde g^n\tilde a)$ for each $n>0$.  Let $A=p\circ\widetilde A$.  Since 
$\widetilde A$ is embeddable, the path $A$ is an embedded bi-infinite 
combinatorial path in $\cay(X^*)$.

\textbf{Paths in $\hyp$:}    Fix $n>0$. Choose a sequence $C_0,\ldots,C_N$ satisfying:
\begin{itemize}
 \item for each $j\leqslant N$, either $C_j$ is the carrier of a hyperplane in $\widetilde X^*$ or $C_j\to\widetilde X^*$ is 
a lift of some $Y_i\to X$ (abusing notation, we use the same name for $C_j$ as for its image);
 \item we have $a\in C_0$ and $g^na\in C_N$;
 \item we have $C_j\cap C_{j+1}\neq\emptyset$ for all $j\leqslant N$;
 \item the number $N$ is minimal with the above properties.
\end{itemize}

For each $j$, let $\alpha_i$ be a combinatorial 
 path in $C_j$, chosen so that $Q=\alpha_0\alpha_2\cdots\alpha_N$ is a path from $a$ to $g^na$ with no 
backtracks.  

For proving that $g$ is loxodromic on $\hyp$, our goal will be to prove that $N$ is bounded below by a linear function of 
$n$.

\textbf{Constructing a disc diagram:}  Let $A_n$ be the subpath of $A$ joining 
$a$ to $g^na$.  Let $D\to\widetilde X^*$ be a minimal disc diagram with boundary path 
$A_nQ^{-1}$.  Moreover, choose the $C_j,$ and $Q$, 
subject to the above constraints, so that $D$ has minimal complexity among all 
diagrams constructed in the preceding manner.  See Figure~\ref{fig:loxodromic_diagram}.

\begin{figure}[h]
\begin{overpic}[width=0.75\textwidth]{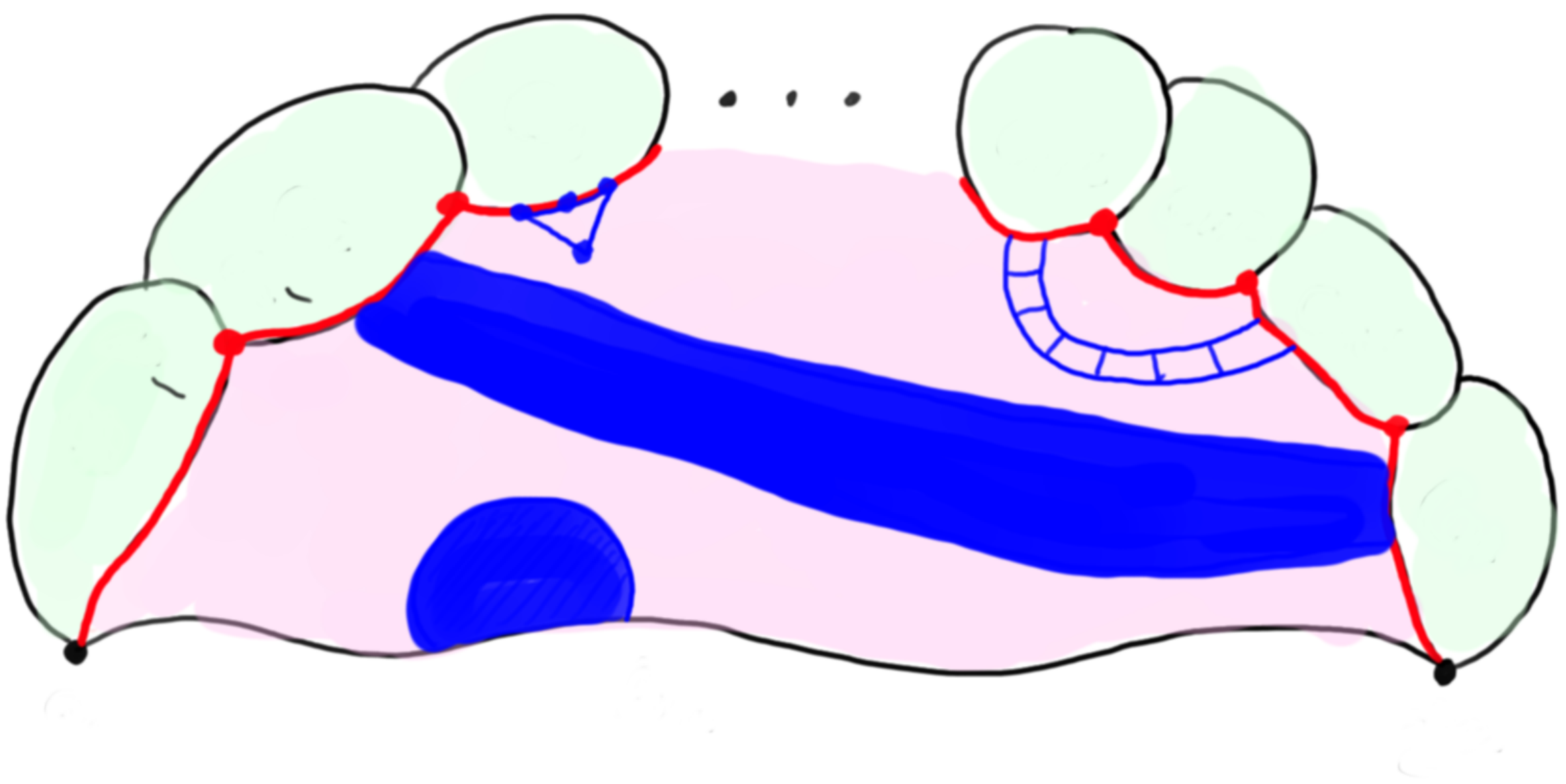}
     \put(5,22){$C_1$}
     \put(20,40){$C_2$}
     \put(35,44){$C_3$}
     \put(83,30){$C_{N-1}$}
     \put(74,35){$C_{N-2}$}
     \put(65,42){$C_{N-3}$}
     \put(93,15){$C_N$}
     \put(4,5){$a$}
     \put(93,5){$g^na$}
     \put(50,5){$A_n$}
     \put(7,27){$\alpha_1$}
     \put(15,32){$\alpha_2$}
\end{overpic}

\caption{The diagram $D$ between $A_n$ and $Q$, along with the surrounding relators and 
carriers $C_1,\ldots,C_N$ along $Q$.  Various illegal features are shown.  The exposed square 
along $\alpha_3$ belongs to $C_3$ by convexity, so we could have homotoped $\alpha_3$ across 
the square to obtain a lower-complexity diagram.  The dual curve carrier traveling from $\alpha_{N-3}$ to 
$\alpha_{N-1}$ maps to a hyperplane carrier intersecting $C_{N-3}$ and $C_{N-1}$, so we could 
have replaced $C_{N-2}$ by this carrier to yield a lower-complexity diagram.  The cone-cell 
intersecting $\alpha_2,\alpha_N$ contradicts minimality of $N$.  If the cone-cell with outer 
path on $A_n$ is a positively curved shell, then asystolicity of $g$ is contradicted.}\label{fig:loxodromic_diagram}
\end{figure}

Since $Q$ has no backtracks and $A_n$ is embedded, $D$ has no spurs along $Q$ or $A_n$.  (So there are at most 
two spurs.)

\textbf{The first square homotopy:}  If $D$ contains an exposed square (or 
generalised corner of a square) corresponding to a length--$2$ subpath $ef$ 
(with $e,f$ single $1$--cubes) that lies on $A_n$, then we can perform a square 
homotopy (shuffling first if necessary, see Remark~\ref{defn:pushing_to_boundary}) to replace $A_n$ by a square-homotopic 
embedded path $P$ with the same length and endpoints.  

Thus: $D=D'\cup_PD''$, where $D'$ is a 
square diagram bounded by the paths $P,A_n$ and $D''$ is a diagram bounded by 
$P$ and $Q$, and there are no generalised corners or spurs along 
$P$.  

\textbf{Bounding subpaths of $P$ in relators and carriers:}  Since $D'$ is a square diagram, it lifts to a square diagram 
$D'\to\widetilde X$ bounded by the subpath of $\widetilde A$ joining $\tilde a$ 
to $\tilde g^n\tilde a$ (since $\widetilde A$ is a lift of $A$) and a lift 
$\widetilde P$ of the embedded path $P$.  

The path $\widetilde A$ is a geodesic, so every dual curve in $D'$ starting on 
$A_n$ ends on $P$.  Now, since $|P|=|A_n|$, we also have that $\widetilde P$ is 
a geodesic of $\widetilde X$, so dual curves in $D'$ starting on $P$ end on 
$A_n$. Hence, if $P_1$ is a subpath of $P$ lying in a cone-cell $\Upsilon$, we 
have $|P_1|<\|Y_i\|/2$, where $Y_i$ is the relator 
to which $\Upsilon$ maps.  If $P_2$ is a subpath of $P$ lying in a 
hyperplane carrier, $|P_2|\leqslant\mathfrak p_g$.

\textbf{Analysis of $D''$:}  Now we prove some claims about $D''$.

\begin{claim}\label{claim:D''_ij}
Let $K$ be a dual curve or cone-cell in $D''$.  Suppose that $K$ 
intersects $\alpha_i,\alpha_j$.  Then $|i-j|\leqslant1$.
\end{claim}

\begin{proof}[Proof of Claim~\ref{claim:D''_ij}]
Minimality of $N$ implies that $|i-j|\leqslant 
2$, for otherwise the hyperplane or relator 
to which $K$ maps would provide a shortcut between $C_i$ and $C_j$, 
contradicting minimality of $N$.  

Now suppose that $i-j=2$.  Let $Y$ be the relator or hyperplane to which $K$ maps.  By replacing $C_{j+1}$ by $Y$, replacing 
$\alpha_i,\alpha_j$ by appropriate subpaths, and replacing $\alpha_{j+1}$ by part of the boundary path of $K$, we have found 
a new choice of $Q$ so that $Q,A_n$ bound a proper subdiagram of $D$ from which the subdiagram $K$ has been removed.  
Chopping off any spurs doesn't increase the complexity.  So we have a new choice of $Q$ leading to a lower-complexity 
diagram, contradicting minimality of $D$.  Hence, $|i-j|\leqslant 1$.
\end{proof}

\begin{claim}[Ruling out shells in $D''$, part I]\label{claim:D''_no_shell}
The diagram $D''$ contains no  positively-curved shell whose outer path is a subpath of $P$.
\end{claim}

\begin{proof}[Proof of Claim~\ref{claim:D''_no_shell}]
Suppose that $K$ is a positively-curved shell in $D''$ with boundary path $OI$, where the outer path $O$ is a 
subpath of $P$.  Let $Y$ be the relator to which $K$ maps, and let $I'$ be a shortest path in $Y$ that is square-homotopic 
in $Y$ rel endpoints to $I$.  Theorem~\ref{thm:short_inner_paths} implies $|I'|<|O|$.  By 
Claim~\ref{claim:bounded_coarse_intersection_with_relators} and the fact that $P$ and $A_n$ cross the same hyperplanes, 
$|O|<\|Y\|/2$, so $|I'O|<Y$.  Hence $I'O$, and thus $IO$, is inessential in $Y$ and thus $K$ can be replaced by a square 
diagram, contradicting minimal complexity of $D$.  Thus there are no positively-curved shells along $P$.
\end{proof}

Since $P$ is embedded, there are also no spurs, and there are no generalised corners in $D''$ along $P$.  
Hence $D''$, by Claim~\ref{claim:D''_no_shell}, $D''$ has no feature of positive curvature whose outer path is 
a subpath of $P$.

\begin{claim}[Ruling out shells in $D''$, part II]\label{claim:D''_no_shell_2}
     The diagram $D''$ contains no positively-curved shell whose outer path is a subpath of $Q$.
\end{claim}

\begin{proof}[Proof of Claim~\ref{claim:D''_no_shell_2}]
Suppose that $K$ is a positively-curved shell with boundary path $OI$, 
where the outer path $O$ is a subpath of $Q$.  Then the relator $Y$ to which $K$ 
maps must intersect $C_i,C_j$ for some $i,j$.  This can only 
happen if $|i-j|\leqslant 1$, by Claim~\ref{claim:D''_ij}.  

By minimal complexity of $D$, it follows that $O$ is square-homotopic to the concatenation of at most $2$ pieces.  Hence, 
letting $O'$ be a shortest path in $Y$ that is square-homotopic rel endpoints to $O$, we have $|O'|\leqslant 2M$.  By 
Theorem~\ref{thm:short_inner_paths}, we have $|O|>72M$, a contradiction. Hence there is no positively-curved shell whose 
outer path is a subpath of $Q$.
\end{proof}

\textbf{The second square homotopy:}  We are working toward an application of 
the ladder theorem --- in view of the diagram trichotomy, we now just need to 
remove generalised corners along $Q$ using square homotopies, as follows.

Let $K$ be a dual curve with one end on $Q$, i.e. one end on some $\alpha_i$.  
We have seen already that $K$ cannot have its other end on $\alpha_j$ for 
$|i-j|\geqslant 2$, because of Claim~\ref{claim:D''_ij}.  

We claim that $K$ cannot end on $\alpha_{i\pm1}$.  Indeed, 
otherwise $D$ would have a subdiagram $E$ bounded by the subpath of 
$\alpha_i\alpha_{i+1}$ (say) subtended by the $1$--cubes dual to $K$, along 
with a path on $\neb(K)$.  Minimal complexity of $D$ and 
Theorems~\ref{thm:Greendlinger},\ref{thm:short_inner_paths} imply that $E$ is a square diagram.  Choosing 
$K$ to be innermost, every dual curve in $E$ travels from $\neb(K)$ to 
$\alpha_i\alpha_{i+1}$.  Now, no two dual curves emanating from $\alpha_i$ 
(or $\alpha_{i+1}$) can cross, because an innermost such pair would give an 
exposed square in $D$ along $\alpha_i$; convexity of $C_i$ would then yield a 
contradiction with minimal complexity by absorbing the square into $C_i$.  By shuffling, we can also assume 
that no two dual curves in $E$ crossing $K$ can cross (if a square has two consecutive edges along the carrier of $K$, we 
can push it out of $E$ without changing $\boundary_pE$).  Hence either $\alpha_i$ has its 
terminal edge equal to the initial edge of $\alpha_{i+1}$ (contradicting that $Q$ has no backtracks), or $\boundary_pE$ has 
an exposed square along $\alpha_i$ (contradicting minimal complexity).  See 
Figure~\ref{fig:no_backtrack}.

\begin{figure}[h]
\begin{overpic}[width=0.6\textwidth]{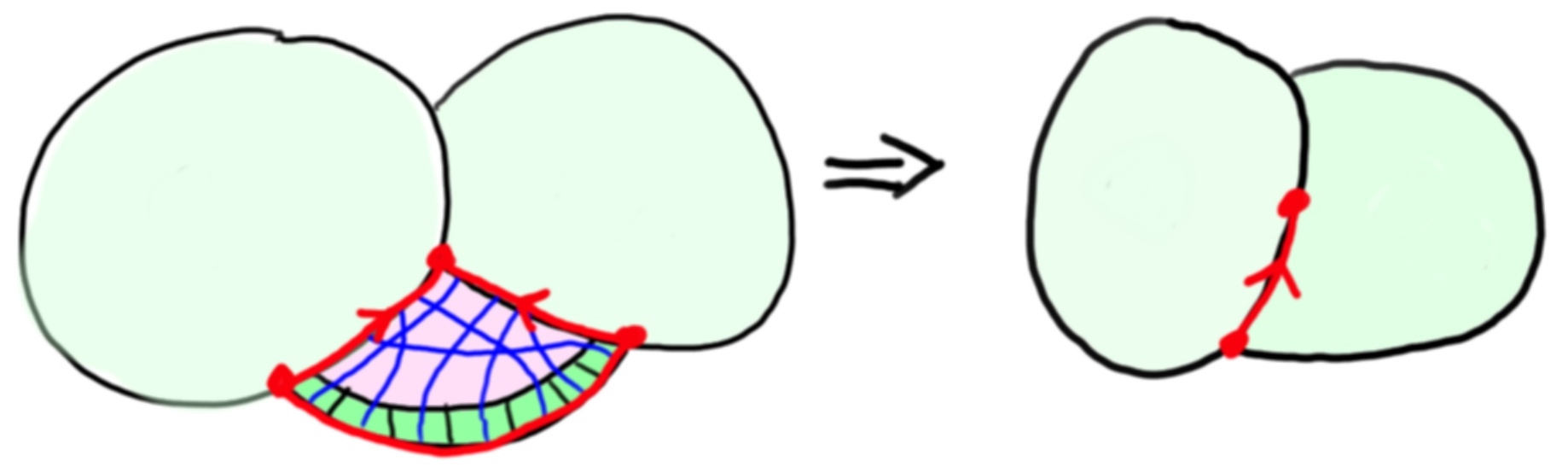}
     \put(15,15){$C_i$}
     \put(40,15){$C_{i+1}$}
     \put(75,15){$C_i$}
     \put(85,15){$C_{i+1}$}
\end{overpic}

\caption{If a dual curve $K$ in $D$ intersects $\alpha_i,\alpha_{i+1}$, then $D$ has a subdiagram $E$ as shown.  (Also shown 
are $C_i,C_{i+1}$).  This must be a square diagram, and convexity of $C_i,C_{i+1}$ then allow us to conclude, from 
minimality of complexity, that the $1$--cubes dual to the first and last points of $K$ coincide.  So no dual curve starts 
and ends on $Q$.}\label{fig:no_backtrack}  
\end{figure}

A simpler version of this argument also shows that $K$ cannot have two ends on $\alpha_i$.  Indeed, if both ends of $K$ are on $\alpha_i$, then $D$ would have a subdiagram $E$ bounded by the 
subpath of $\alpha_i$ between (and including) the $1$--cubes dual to $K$ and a path in $\neb(K)$.  As in the preceding argument, $E$ must be a square diagram, so its boundary path lifts to a closed 
path in $\widetilde X$, whence $\alpha_i$ contains two edges dual to the same hyperplane, contradicting that $\alpha_i$ is a geodesic.    

Let $Q_1$ be an embedded path in $D''$ such that:
\begin{itemize}
 \item $Q_1$ and $Q$ have the same endpoints;
 \item $|Q_1|\leqslant|Q|$;
 \item $Q_1$ has no spurs;
 \item the subdiagram $F$ of $D''$ bounded by $Q$ and $Q_1$ is a square 
diagram, and has as many squares as possible subject to the above constraints.
\end{itemize}

\begin{claim}\label{claim:F}
     Every dual curve in $F$ with an end on $Q$ has an end on $Q_1$, and every dual curve with an end on $Q_1$ has an end on 
$Q$.
\end{claim}

\begin{proof}[Proof of Claim~\ref{claim:F}]
The preceding discussion shows that no dual curve in $D''$ (of which $F$ is a subdiagram) starts and ends on $Q$.  So, since 
$F$ is a square diagram, any dual curve starting on $Q$ ends on $Q_1$.  Since $|Q_1|\leqslant|Q|$, it follows that every 
dual curve in $F$ with one end on $Q_1$ has an end on $Q$.
\end{proof}

\textbf{The subdiagram $D''_1$:}  Let $D''_1$ be the subdiagram of $D''$ bounded by $Q_1$ and $P$.  

\begin{claim}\label{claim:D''_1_spurs_cornsquares}
There are no generalised corners of squares of $D''_1$ lying along $Q_1$.     
\end{claim}

\begin{proof}[Proof of Claim~\ref{claim:D''_1_spurs_cornsquares}]
Suppose that 
$ef$ is a length--$2$ subpath of $Q_1$ corresponding to a generalised corner of 
a square in $D''_1$.  By shuffling, we can assume that $ef$ is a subpath of the 
boundary path of a square $s$ in $D''_1$.  Then $\boundary_ps=efe'f'$, and we 
can replace $ef$ by $e'f'$ (and remove spurs if necessary) in $Q_1$ to obtain a 
new path $Q_1'$, of length at most $|Q_1|$, such that the subdiagram bounded by 
$Q$ and $Q_1'$ has more squares than $F$, a contradiction.  See 
Figure~\ref{fig:big_F}. 
\end{proof}

\begin{figure}[h]
\begin{overpic}[width=0.75\textwidth]{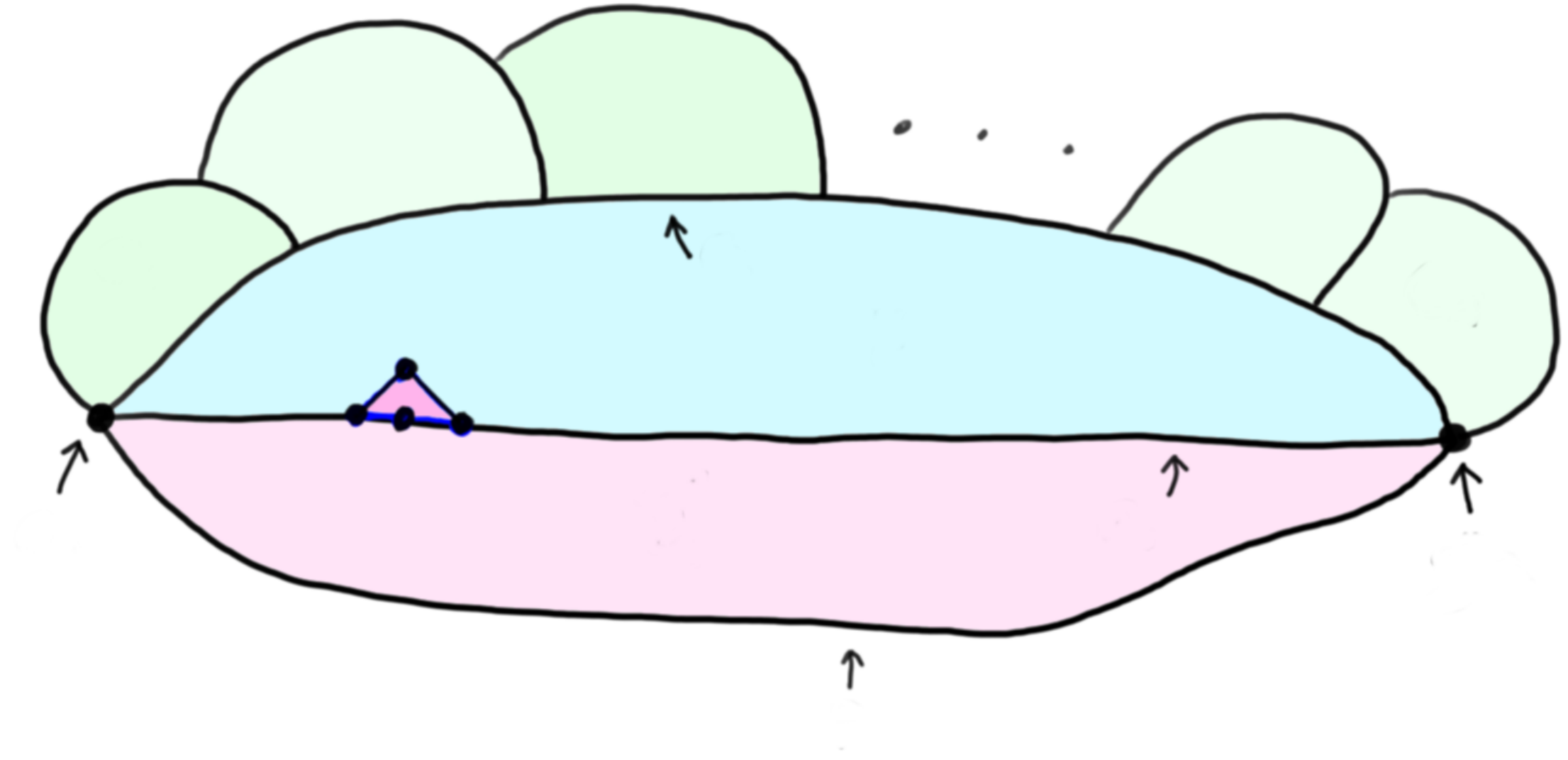}
     \put(2,15){$a$}
     \put(10,30){$C_1$}
     \put(44,30){$Q$}
     \put(90,30){$C_N$}
     \put(93,14){$g^na$}
     \put(55,25){$F$}
     \put(45,15){$D''_1$}
     \put(73,14){$Q_1$}
     \put(53,2){$P$}
\end{overpic}

\caption{The diagram $D''_1$ has no generalised corner of a square along $Q_1$, for otherwise we could enlarge the 
subdiagram $F$ with a square homotopy.}\label{fig:big_F}
\end{figure}

We found a ladder:

\begin{claim}\label{claim:extracting_a_ladder}
The diagram $D''_1$ is a ladder.
\end{claim}

\begin{proof}[Proof of Claim~\ref{claim:extracting_a_ladder}]
We have already seen that $D''$, and hence $D''_1$, has no positively-curved feature (shell, spur, generalised corner)
along $P$.  We have also seen already that $Q_1$ has no spur and $D''_1$ has no generalised corner on $Q_1$.

Suppose that $K$ is a positively-curved shell in $D''_1$ with 
boundary path $OI$, where the outer path $O$ is a subpath of $Q_1$.  Then $O$ is 
also a subpath of $\boundary_pF$, and every dual curve in $F$ emanating from $O$ 
ends on some $\alpha_i$.  Let $i_0,i_1$ be the minimal and maximal values of $i$ 
such that $F$ contains a dual curve $K'$ that travels from $\alpha_i$ to $O$.  
Let $Y$ be the relator to which $K$ maps.  Then there is a sequence 
$C_{i_0},H,Y,H',C_{i_1}$, where $H,H'$ are hyperplanes and consecutive terms 
intersect.  Hence, by minimality of $N$, we have $i_1-i_0\leqslant 4$.  See 
Figure~\ref{fig:shrink}.

\begin{figure}[h]
\begin{overpic}[width=0.75\textwidth]{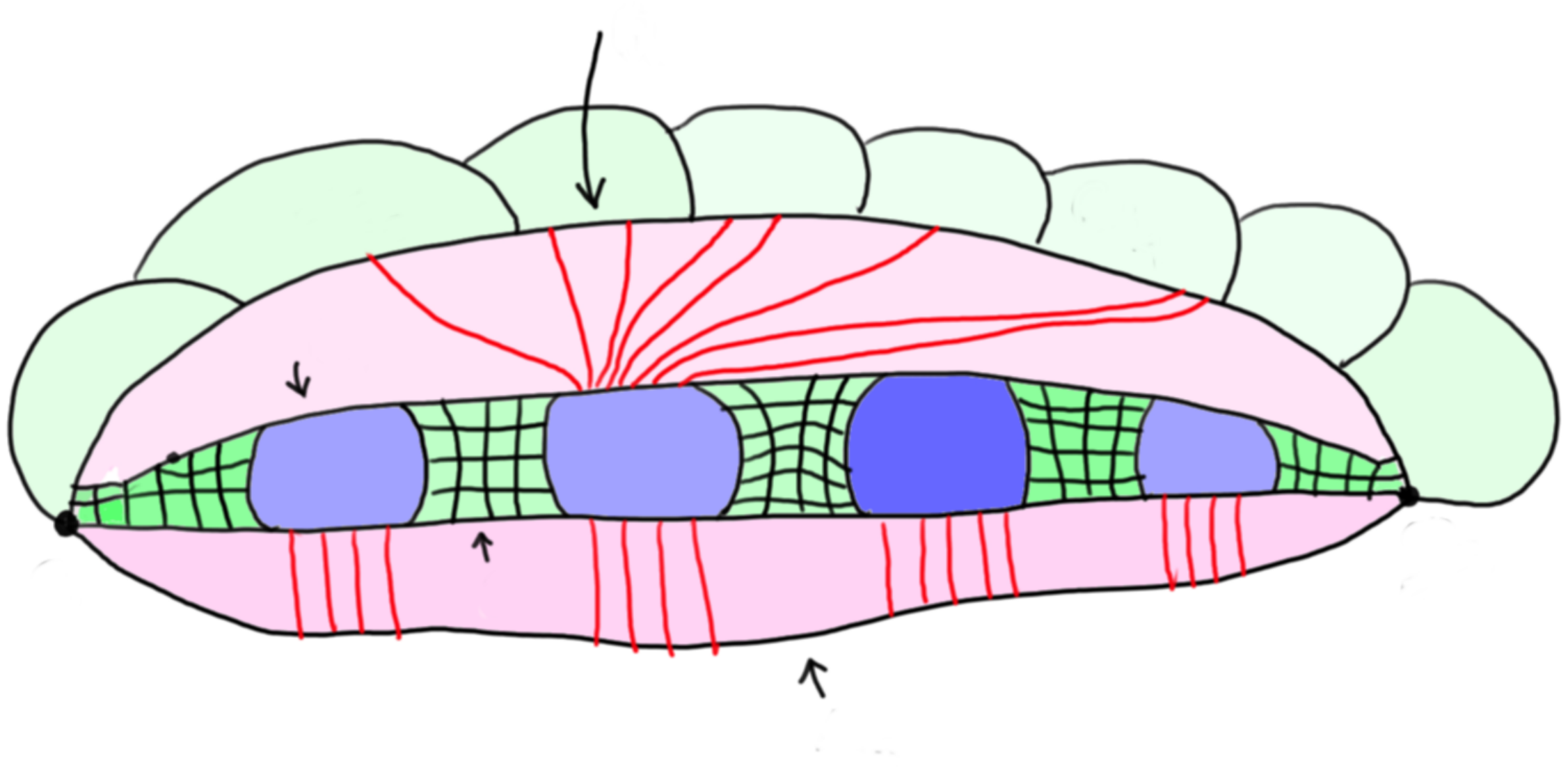}
     \put(3,12){$a$}
     \put(52,3){$A_n$}
     \put(90,14){$g^na$}
     \put(70,35){$C_{i_1}$}
     \put(37,49){$Q$}
     \put(20,35){$C_{i_0}$}
     \put(31,11){$p$}
\end{overpic}

\caption{The diagram $D$ is the union of square diagrams $D',F$ and a ladder $D''_1$.  Dual curves in $F$ travel from $Q$ to 
$Q_1$, and dual curves in $D'$ travel from $P$ to $A_n$.  The cone-cells in $D''_1$ have small projection to $A_n$ because 
of asystolicity of $g$.  They have bounded projection to $Q$ because of minimality of $N$: any dual curve in $F$ 
emanating from a common cone-cell of $D''_1$ must end on $\alpha_i$ for at most one of $5$ values of $i$.  
This is ultimately used to show that $D''_1$, and hence all of $D$, is a square diagram.}\label{fig:shrink}

\end{figure}

We'd now like to write $O$ as a concatenation of at most $5$ pieces.

The square diagram $F$ lifts to a disc diagram $F\to\widetilde X$ with boundary path $\widetilde Q\widetilde Q_1^{-1}$, 
where $\widetilde Q$ is a lift of $Q$ and $\widetilde Q_1$ is a lift of $Q_1$.  Moreover, $\widetilde Q_1$ has a subpath 
$\widetilde O$ lying on an elevation $\widetilde Y$ of $Y$, where $\widetilde O$ is a lift of $O$.  Also, $\widetilde 
Q=\tilde \alpha_0\cdots\tilde\alpha_N$, where each $\tilde\alpha_i$ is a lift of $\alpha_i$ to an elevation $\widetilde C_i$ 
of $C_i$.  (So, either $\widetilde C_i$ is a hyperplane-carrier or is the universal cover of a relator.)

Note that, if two 
dual curves emanating from $\widetilde O$ cross, then by shuffling, we find a square in $F$ with two consecutive boundary 
edges on $\widetilde O$.  This square maps to $\widetilde Y$, and its image in $D$ can thus be absorbed into $K$, 
contradicting minimality of the complexity.  So no two dual curves emanating from $\widetilde O$ can cross in $F$.  

Thus, we can write $\widetilde O=\theta_{i_0}\theta_{i_0+1}\cdots\theta_{i_1}$, where each $\theta_j$ has the property that 
all dual curves starting on $\theta_j$ end on $\tilde\alpha_j$.  For each $j$, if $\widetilde C_j\neq\widetilde Y$ and 
$\widetilde C_j$ is not the carrier of a hyperplane crossing $\widetilde Y$, we therefore have that $\theta_j$ is a path in 
an abstract piece. 

Suppose that $\widetilde C_j=\widetilde Y$ or $\widetilde C_j$ is the carrier of a hyperplane crossing $\widetilde Y$.  Let 
$\kappa$ be a dual curve from $\widetilde O$ to $\tilde\alpha_j$, and let $H$ be a hyperplane crossing the image of $\kappa$ 
in $\widetilde X$.  Then $F$ contains a dual curve $\omega$ mapping to $H$ and crossing $\kappa$.  The dual curve $\omega$ 
must cross all dual curves emanating from $\widetilde O$.  Hence either $H$ crosses $\widetilde Y$, or $\theta_j$ is a 
(wall) piece in $Y$.  

Take  $H$ to be the hyperplane crossing the image of $\kappa$ inside the square $s$ of $F$ containing the first 
edge of $\kappa$.  If $H$ crosses $\widetilde Y$, then convexity of $\widetilde Y$ implies that $s$ is in 
$\widetilde Y$.  Hence we could have absorbed $s$ into $K$ to 
reduce the complexity of $D$.  Thus either $\theta_j$ is a path in an abstract piece, or all such $\kappa$ have length $0$, 
so $\theta_j$ is a subpath of $\alpha_j$.

Let $j_0$ and $j_1$ be the largest and smallest $j$ for which the latter holds.  Let $U^{-1}$ be the subpath of $O^{-1}$ 
from the initial point of $\theta_{j_0}$ to the initial point of $O$, and let $V$ be the subpath of $O$ from the terminal 
point of $\theta_{j_1}$ to the terminal point of $O$.  (Here we regard $\theta_0,\theta_1$ as paths in $\widetilde X^*$.)  
Then $UIV\to\widetilde X^*$ is a path factoring through $Y$ and $D$, and joining $C_{j_0}$ to $C_{j_1}$.    So we can 
replace $C_{j_0+1},\ldots,C_{j_1-1}$ by $Y$ and modify $Q$ to obtain a new diagram which is a proper subdiagram of $D$ (the 
cone-cell $K$ is removed).  This contradicts minimal complexity.  The exception is when $j_0=j_1$ and $\theta_{j_0}$ is a 
trivial path (because then removing $K$ from $D$ would give a singular disc diagram, not a disc diagram).  However, in this 
case, each $\theta_j,j\neq j_0$ is a piece and $O$ is the concatenation of at most 4 pieces.

We have shown that $O$ is a concatenation of at most $5$ paths, each of which is square-homotopic to a piece.  
Thus $O$ is square-homotopic to a path of length at most $5M$.  By Theorem~\ref{thm:short_inner_paths}, 
$\boundary_pK$ is therefore square-homotopic to a path of length at most $10M$, so, by the small-cancellation 
condition, $\boundary_pK$ is not essential in $Y$, and thus $K$ could have been replaced by a square subdiagram, lowering 
complexity.

It then follows from  Theorem~\ref{thm:ladder_thm} that $D_1''$ is a ladder.
\end{proof}

\textbf{The ladder is a square diagram:}  
We now analyse the ladder $D''_1$ more carefully.

\begin{claim}\label{claim:D''_1_no_cone}
$D$ is a square diagram.
\end{claim}

\begin{proof}[Proof of Claim~\ref{claim:D''_1_no_cone}]
Let $\Upsilon$ be a cone-cell of 
$D_1''$.  Then $\boundary_p\Upsilon=e_{Q_1}f_1e_Pf_2$, where $f_1,f_2$ 
are trivial or are subpaths in the incident pseudorectangles, and $e_{Q_1}$ is a maximal subpath of $\boundary_p\Upsilon$ 
lying on $Q_1$, and $e_P$ is a maximal subpath on $P$.  Let $Y_\Upsilon$ be the relator to which $\Upsilon$ maps.  Then 
$|e_{P}|<\|Y_\Upsilon\|/2$, because of the bound on the diameter of projections 
of $\widetilde A$ to 
elevations of relators (Claim~\ref{claim:bounded_coarse_intersection_with_relators}).  On the other hand, 
$e_{Q_1}$ is square-homotopic to a path of length at most $5M$, by the exact same argument as was used in 
Claim~\ref{claim:extracting_a_ladder} to rule out shells with outer path on $Q_1$. Finally, $f_1,f_2$ are either trivial or 
pieces.

Thus, up to square-homotopy, $|\boundary_p\Upsilon|\leqslant 7M+\frac12\|Y_\Upsilon\|$.  Minimal complexity 
of $D$ requires that $|\boundary_p\Upsilon|\geqslant\|Y_\Upsilon\|$, so 
$7M\geqslant\frac12\|Y_\Upsilon\|$, contradicting the uniform $C''(\frac{1}{144})$ 
condition.  Thus $D_1''$ cannot contain any cone-cells.  In other words, $D''_1$ is a square diagram.  Since $D$ consists of 
$D''_1$ together with the square subdiagrams $F$ and $D'$, it follows that $D$ is a square diagram.
\end{proof}

At this point, we are ready to prove:

\begin{claim}\label{claim:g_loxo}
The element $g$ is loxodromic on $\hyp$.
\end{claim}

\begin{proof}[Proof of Claim~\ref{claim:g_loxo}]
Claim~\ref{claim:D''_1_no_cone} showed that $D$ is a square diagram.  So $D$ lifts to a diagram $D\to\widetilde X$, bounded 
by a lift $\widetilde Q$ of $Q$ and a lift $\widetilde A_n$ of $A_n$ joining $\tilde a$ to $\tilde g^n\tilde a$.  Now, each 
$\alpha_i$ lifts to a subpath $\tilde\alpha_i$ of $\widetilde Q$, and $\tilde\alpha_i$ lies in an elevation $\widetilde C_i$ 
of the hyperplane carrier or relator $C_i$.  Hence $\dist_{\widehat{\hyp}}(\tilde a,\tilde g^n\tilde a)$ is bounded above by 
a linear function of $N$, by Lemma~\ref{lem:qi_to_int_graph}.

If $N$ grows sublinearly in $n$, this means that $\tilde g$ is not loxodromic on $\widehat{\hyp}$.  But recall (from the very 
beginning of the proof of the lemma) that $\tilde g$ is loxodromic on $\widehat{\hyp}$ because it is embeddable and 
loxodromic on 
$\widetilde{\hyp}$.  Hence, $N$ grows linearly in $n$, so $g$ is loxodromic on $\hyp$.
\end{proof}

To conclude, we have to prove that $g$ is fast.  This is similar to the proof that $g$ is loxodromic, except we use the 
stronger estimates from Claim~\ref{claim:bounded_coarse_intersection_with_relators} that require 
$\frac{17}{36}$--asystolicity.

\begin{claim}\label{claim:fast}
There exists $\Delta$ such that $g$ is $\Delta$--fast.     
\end{claim}

\begin{proof}[Proof of Claim~\ref{claim:fast}]
Let $n\geqslant 0$.  Let $S$ be a geodesic in $\cay(X^*)$ from $a$ to $g^na$.  Write $S=S_0\cdots S_N$, with 
each $S_i$ lying in a relator or hyperplane carrier.  Let $D\to \widetilde X^*$ be a minimal-complexity diagram bounded by 
$A_n$ and $S$.  The proof of Claim~\ref{claim:D''_no_shell} implies that $D$ contains no positively curved shell with 
outer path along $A_n$.  Also, $D$ has no positively curved shell with outer path along $S$, by 
Theorem~\ref{thm:short_inner_paths}, since $S$ is a geodesic.

There are no spurs in $A_n$, since $A$ is an embedded path, and no spurs in $S$ since $S$ is a geodesic.  So, any feature of 
positive curvature in $D$ occurring along $A_n$ or $S$ is a generalised corner of a square.  Hence, performing square 
homotopies and applying Theorem~\ref{thm:ladder_thm}, we have the following.  The diagram $D$ is square-homotopic to a ladder 
$D''_2$ with boundary path $A'_nT^{-1}$, where $T$ is a geodesic square-homotopic to $S$ and $A'_n$ is 
square-homotopic to, and 
has the same length as, $A_n$.\footnote{It was much simpler to see that $D''_2$ is a ladder than it was for 
$D''_1$ above 
because $S$ is a $\cay(X^*)$--geodesic.}

Let $K$ be a cone-cell of $D''_2$.  Let $\gamma$ be the part of $\boundary_pK$ lying on $T$ and let $\sigma$ 
be the part of 
$\boundary_pK$ lying on $A'_n$.  All dual curves in $D$ starting on $A_n$ end on $A'_n$, because $A'_n$ was 
obtained from $A_n$ by square homotopies across generalised corners.  Thus, $\frac{17}{36}$--asystolicity and 
Claim~\ref{claim:bounded_coarse_intersection_with_relators} imply that $|\sigma|<\frac{35}{72}\|Y\|$, where $Y$ is the 
relator to which $K$ maps.

Now, since $T$ is a geodesic and $D''_2$ is a ladder, we have $|\gamma|\leqslant |\sigma|+2M$.  Hence 
$|\boundary_pK|<\|Y\|(\frac{1}{72}+\frac{35}{36})<\|Y\|$.  Thus $\boundary_pK$ bounds a square diagram in $X$, contradicting 
minimal complexity.  Hence $D$ contains no cone-cells.  Hence $D$ is a square diagram, and dual curves starting on $S$ end on 
$A_n$.  We thus have $|S_i|\leqslant\mathfrak q_g$ for all $i$, 
by Claim~\ref{claim:bounded_coarse_intersection_with_relators}.  Thus there exists $\Delta$ (depending on $g$) so that $g$ 
is $\Delta$--fast.
\end{proof}

To conclude, Claim~\ref{claim:g_loxo} and Claim~\ref{claim:fast} combine with Lemma~\ref{lem:acyl_version_2} to show that 
$g$ acts on $\hyp$ as a WPD element.
\renewcommand{\qedsymbol}{$\Box$}
\end{proof}

\subsection{Achieving asystolicity in the essential case}\label{subsec:loxodromic_construction}
In this subsection, we additionally assume that $\pi_1X$ acts \emph{essentially} on $\widetilde X$, in the sense 
of~\cite{CapraceSageev:rank_rigidity}.  This means that for each hyperplane $H$ of $\widetilde X$, each of the components of 
$\widetilde X-H$ contains points in any (fixed) $\pi_1X$--orbit arbitrarily far from $H$.  In particular, 
there is no proper $\pi_1X$--invariant convex subcomplex.  

By~\cite[Theorem 5.4]{Hagen:boundary} (which in turn relies on results in~\cite{CapraceSageev:rank_rigidity}), and 
Lemma~\ref{lem:qi_to_int_graph}, one of the following holds:
\begin{enumerate}
     \item \label{item:product}$\widetilde X\cong A\times B$, where $A,B$ are unbounded CAT(0) cube complexes.  In this 
case, either $\pi_1X^*$ is finite or $\pi_1X^*=\pi_1 X$.  Indeed, if $\widetilde Y_i=\widetilde X$, then $Y_i\to X$ is a 
covering map.  Since $Y_i$ is compact, this implies that $Y_i$ is a finite cover, whence $\pi_1X^*$ is finite.  

Otherwise, 
since $\widetilde Y_i\subseteq\widetilde X$ is convex, we have $\widetilde Y_i=A'\times B'$, where $A'\subset A$ and $B'\subset 
B$ are convex subcomplexes, and one of the two containments is proper.  Choose a hyperplane $H$ disjoint from $\widetilde 
Y_i$.  Without loss of generality, $H=H'\times B$, where $H'$ is a hyperplane of $A$ disjoint from $A'$.  Then $B'$ is an 
abstract wall-piece, so $B'$ has bounded diameter.  Hence there is a hyperplane $V'$ of $B$ disjoint from $B'$, so 
$\widetilde Y_i$ is disjoint from the hyperplane $A\times V'$.  Thus $A'$ is an abstract wall-piece and thus bounded.

Hence $\widetilde Y_i$ is bounded and therefore has trivial stabiliser, i.e. $Y_i$ is contractible.  

 \item \label{item:general_type}  There exists $\tilde g\in\pi_1X$ acting loxodromically on $\widetilde{\hyp}$.
\end{enumerate}

\textbf{We now restrict to case~\eqref{item:general_type}.}  So, we are assuming that some $\tilde g\in\pi_1X$ 
acts loxodromically on $\widetilde{\hyp}$.  By Lemma~\ref{lem:loxodromic_persist_1}, either $\tilde g$ acts 
loxodromically on $\widehat{\hyp}$, or stabilises some $\widetilde Y_i$.  The next lemma arranges for the 
former to hold, in a slightly more general setting.

\begin{lem}\label{lem:loxodromic_on_coned_contact_graph}
Let $X$ be a compact connected nonpositively curved cube complex such that $\pi_1X$ acts on $\widetilde X$ 
essentially.  Let $\{\widetilde Y_i\}_{i\in\mathcal I}$ be a $\pi_1X$--invariant collection of convex subcomplexes such that 
$\stabilizer_{\pi_1X}(\widetilde Y_i)$ acts cocompactly on $\widetilde Y_i$ for all $i$.  Suppose that there exists 
$M$ such that  $\diam(\gate_{\widetilde Y_i}(\widetilde Y_j))\leqslant M$ whenever $\widetilde Y_i\neq\widetilde 
Y_j$.  Suppose also that for all $\widetilde Y_i$ and hyperplanes $H$ disjoint from $\widetilde Y_i$, we have 
$\diam(\gate_{\widetilde Y_i}(\mathcal N(H)))\leqslant M$.

Let $\widehat{\hyp}$ be obtained from $\widetilde X$ by coning off each hyperplane carrier and each 
$\widetilde Y_i$.  Then one of the following holds:
\begin{itemize}
 \item There exists $i$ such that $\widetilde X=\widetilde Y_i$.
 \item There exists $\tilde g\in \pi_1X$ acting loxodromically on $\widehat{\hyp}$.
 \item $\widetilde X$ decomposes as a product with unbounded factors.\end{itemize}
\end{lem}

\begin{proof}
If $\widetilde X$ is a product with unbounded factors, we are done, so we can assume that $\pi_1X$ contains elements acting 
loxodromically on $\widetilde{\hyp}$.  Let $\tilde g\in\pi_1X$ be such an element.

If $\tilde g$ acts loxodromically on $\widehat{\hyp}$, we are done.  If not, then by (the proof of) 
Lemma~\ref{lem:loxodromic_persist_1}, $\tilde g$ stabilises some $\widetilde Y_i$.

Suppose that there exists a hyperplane $U$ of $\widetilde X$ that is disjoint from $\widetilde Y_i$.  Let 
$\ell=\dist_{\widetilde X}(\neb(U),\widetilde Y_i)$.  Let $n\gg 0$ be chosen sufficiently large in terms of 
$\ell$ and $M$. 

Then $\dist_{\widetilde X}(U,\tilde g^nU)>10^9(\ell+ M)$ and $\dist_{\widetilde{\hyp}}(U,\tilde g^nU)>100$.  
Moreover, $\tilde g^nU$ is also disjoint from $\widetilde Y_i$, since $\tilde g$ stabilises $\widetilde Y_i$.  
(So $\widetilde Y_i$ lies ``between'' the hyperplanes $U,\tilde g^nU$.)

Apply the Double-Skewering Lemma~\cite{CapraceSageev:rank_rigidity} to obtain $\tilde k\in\pi_1 X$ such that 
some (hence any) combinatorial geodesic axis $\widetilde B$ for $\tilde k$ is cut by both $U$ and $\tilde 
g^nU$.  Since $U,\tilde g^nU$ are $100$--far in $\widetilde{\hyp}$, Lemma~\ref{lem:qi_to_int_graph} 
and~\cite[Proposition 5.3]{Hagen:boundary} imply that $\tilde k$ acts loxodromically on $\widetilde{\hyp}$.

Now, any hyperplane $H$ separating $U$ from $\tilde g^nU$ must cross $\widetilde B$.  Such an $H$ either 
crosses $\widetilde Y_i$, or separates either $U$ or $\tilde g^nU$ from $\widetilde Y_i$.  So there are at 
least $10^9(\ell +M) - 2\ell$ hyperplanes $H$ that cross both $\widetilde B$ and $\widetilde Y_i$.

Suppose $\tilde k$ stabilises some $\widetilde Y_j$.  Then we could have chosen $\widetilde B$ to lie in $\widetilde Y_j$.  
So, there are $>M$ hyperplanes that cross $\widetilde Y_i,\widetilde Y_j$, and the geodesic $\widetilde B$.  It follows that 
$\diam(\gate_{\widetilde Y_i}(\widetilde Y_j))>M$, which is a contradiction unless $\widetilde Y_i=\widetilde Y_j$.

But $\tilde k$ cannot stabilise $\widetilde Y_i$.  Indeed, $\widetilde Y_i$ lies in the $U$--halfspace containing $\tilde 
g^nU$ and in the $\tilde g^nU$--halfspace containing $U$.  Since $\tilde k$ skewers $U,\tilde g^nU$, we thus have that 
$\tilde k\widetilde Y_i$ and $\widetilde Y_i$ are separated by $U$ or $\tilde g^nU$, and are thus distinct.

Now Lemma~\ref{lem:loxodromic_persist_1} implies that $\tilde k$ is loxodromic on $\widehat{\hyp}$, as 
required.

The above works provided $\widetilde Y_i$ is disjoint from some hyperplane.  But suppose $\widetilde Y_i$ 
intersects every hyperplane.  Then for any $\tilde t\in\pi_1X$, infinitely many hyperplanes 
intersect $\widetilde Y_i$ and $\tilde t\widetilde Y_i$, implying that $\pi_1X=\stabilizer_{\pi_1X}(\widetilde 
Y_i)$.  By essentiality, $\widetilde X=\widetilde Y_i$.
\end{proof}

Now we find asystolic elements, under the two different situations in Theorem~\ref{thmi:main}.

Given $\tilde g\in\pi_1X$, let $\tau_{\tilde g}=\inf_{\tilde a\in\widetilde 
X^{(0)}}\dist_{\widetilde X}(\tilde a,\tilde g\tilde a)$ be the 
combinatorial translation length of $\tilde g$ on $\widetilde X$.  We have $\tau_{\tilde g}\geqslant1$ whenever $\tilde g\neq 1$.

\begin{defn}[The constant $L_0$]
Let $\mathcal L_0$ be the (nonempty) set of $\tilde g\in\pi_1X$ such that $\tilde g$ is loxodromic on 
$\widetilde{\hyp}$.  Let $L_0=L_0(X)=\inf_{\tilde g\in\mathcal L_0}\tau_{\tilde g}$.
\end{defn}

\begin{rem}[Conditions enabling asystolicity]
Let $\langle X\mid\{Y_i\to X\}_{i\in\mathcal I}\rangle$ be a cubical presentation such that one of the 
following holds:
\begin{enumerate}[(I)]
 \item $\langle X\mid\{Y_i\to X\}_{i\in\mathcal I}\rangle$ satisfies the uniform $C''(\frac{1}{7L_0})$ condition.  Moreover, the cubical presentation is 
\emph{minimal} (in the sense of Definition~\ref{defn:minimal}), i.e. for all $i\in\mathcal I$ and some (hence 
any) elevation $\widetilde Y_i\to\widetilde X$ of $Y_i\to X$, the subgroups $\pi_1Y_i$ and 
$\stabilizer_{\pi_1X}(\widetilde Y_i)$ are conjugate.\label{item:regime_1}  Let $M=\frac{1}{7L_0}\inf_{i\in\mathcal I}\|Y_i\|$ be the bound 
on the diameters of abstract pieces provided by the small-cancellation condition.

\item $\langle X\mid\{Y_i\to X\}_{i\in\mathcal I}\rangle$ is a cubical presentation with a uniform bound $M$ on the 
diameters 
of abstract cone-pieces and abstract wall-pieces, and each $\stabilizer_{\pi_1X}(\widetilde Y_i)$ has infinite 
index in $\pi_1X$.  In this case, Lemma~\ref{lem:loxodromic_on_coned_contact_graph} provides a nonempty set 
$\mathcal L_1\subseteq \pi_1X$ such that each $\tilde g\in\mathcal L_1$ acts loxodromically on $\widehat{\hyp}$. 
 Let $L_1=\inf_{\tilde g\in\mathcal L_1}\tau_{\tilde g}$.\label{item:regime_2}
\end{enumerate}
For each $i\in\mathcal I$, let $\widehat Y_i\to Y_i$ be a finite connected regular cover so that the following holds:
\begin{enumerate}[(i)]
 \item \label{item:r1_cover}If~\eqref{item:regime_1} holds, then $\|\widehat Y_i\|>\max\{144,7L_0\}\cdot M$ for all 
$i\in\mathcal I$.
 \item \label{item:r2_cover}If~\eqref{item:regime_2} holds, then $\|\widehat Y_i\|>\max\{144 ,7L_1\}\cdot M$ for 
all $i\in\mathcal I$.
\end{enumerate}
\end{rem}

Note that $L_0$ does not depend on the $Y_i$.  The constant $L_1$ depends on the $Y_i$, but not on the further finite covers.

\begin{lem}[Finding fast $\hyp$--loxodromics]\label{lem:acyl_regime}
Let $\langle X\mid \{Y_i\}_{i\in\mathcal I}\rangle$ satisfy ~\eqref{item:regime_1} 
[resp.~\eqref{item:regime_2}] and let the finite covers $\widehat Y_i\to Y_i$ satisfy~\eqref{item:r1_cover} 
[resp.~\eqref{item:r2_cover}].  Let $G$ be the group 
with cubical presentation $\langle X\mid \{\widehat Y_i\}_{i\in\mathcal I}\rangle$.  Suppose that $\widetilde 
X$ contains a nontrivial wall-piece, or a pair of distinct $\widetilde Y_i,\widetilde Y_j$ with 
$\diam(\gate_{\widetilde Y_i}(\widetilde Y_j))\geqslant 1$.  Then $G$ is either virtually cyclic or acylindrically 
hyperbolic.
\end{lem}

\begin{proof}
 Choose $\tilde g\in \mathcal L_0$ as follows.  If~\eqref{item:regime_1} holds, then choose $\tilde g$ with 
$\tau_{\tilde g}=L_0$.  If~\eqref{item:regime_2} holds, choose $\tilde g$ to be in $\mathcal L_1$ and satisfy 
$\tau_g=L_1$.  

If~\eqref{item:regime_1} holds,  $\tilde g$ cannot be conjugate into any 
$\pi_1Y_i$, since $\|Y_i\|>7L_0$.  The minimality hypothesis thus implies that $\tilde g$ 
cannot stabilise any $\widetilde Y_i$.

If~\eqref{item:regime_2} holds, then $\tilde g\in\mathcal L_1$, so $\tilde g$ is loxodromic on 
$\widehat{\hyp}$ and thus cannot stabilise any $\widetilde Y_i$.

We now argue that $\tilde g$ is $\frac{17}{36}$--asystolic (recall that this means that \textbf{every} axis of $\tilde g$ is $\frac{17}{36}$--asystolic).  Suppose not.  Then, by definition, for some 
combinatorial geodesic axis $\widetilde A$ of $\tilde g$, there is a subpath $\widetilde P$ of $\widetilde A$ 
such that $\widetilde P$ lies in some $\widetilde Y_i$ and $|\widetilde P|\geqslant 17\|\widehat Y_i\|/36$.

Let $\sigma=\min_j\|\widehat Y_j\|$.  If~\eqref{item:regime_1} holds, we have $\sigma>7L_0$, 
since there is a nontrivial piece.  Likewise, if~\eqref{item:regime_2} holds, we have $\sigma>7L_1$.  So, in either case, 
$\sigma> 7\tau_g$.

Now, $\widetilde P$ contains a $0$--cube $\tilde a$ such that $\tilde a,\tilde g\tilde a,\tilde g^2\tilde 
a\in\widetilde P$, since $|\widetilde P|>7\cdot17\cdot\tau_g/36$.  Thus $\widetilde Q=\widetilde 
P\cap\tilde g\widetilde P$ is a geodesic that lies in $\widetilde Y_i\cap\tilde g\widetilde 
Y_i$.  Since $\tilde g\widetilde Y_i\neq\widetilde Y_i$, the geodesic $\widetilde Q$ is a cone-piece.

Hence $|\widetilde Q|<\frac{1}{144}\|\widehat Y_i\|$.  On the other hand, $|\widetilde Q|\geqslant |\widetilde 
P|-2\tau_g$.  So, $|\widetilde Q|>\frac{17}{36}\|\widehat Y_i\|-2\tau_g$.  So, $\tau_g>\frac{67}{288}\sigma$, 
while on the other hand, $\tau_g<\sigma/7$.  This is a contradiction, so $\tilde g$ is 
$\frac{17}{36}$--asystolic.

Let $X^*$ be the presentation complex formed from $\langle X\mid\{\widehat Y_i\}_{i\in\mathcal I}\rangle$ and let $\hyp$ be 
obtained from $\widetilde X^*$ by coning off the various hyperplane carriers and $\widehat Y_i$.  Then 
Lemma~\ref{lem:loxodromics_persist_2} implies that $\tilde g$ acts on $\hyp$ as a WPD element.  
Applying~\cite[Theorem 1.1]{Osin:acyl} shows that $G$ is virtually cyclic or acylindrically hyperbolic.
\end{proof}

\subsection{Proving Theorem~\ref{thmi:main}}To summarise the essential case, we have:

\begin{prop}[Acylindrical hyperbolicity when $\pi_1X$ acts essentially]\label{prop:essential_acylindrical}
Let $X$ be a compact nonpositively curved cube complex such that $\pi_1X$ acts essentially on $\widetilde X$.

Let $\{Y_i\to X\}_{i\in\mathcal I}$ be a (possibly infinite) set of local isometries of nonpositively curved 
cube complexes with each $Y_i$ compact.  Suppose that $\widetilde X$ does not split as a nontrivial product.

\begin{enumerate}
 \item Suppose that $\langle X\mid\{Y_i\}_{i\in\mathcal I}\rangle$ satisfies~\eqref{item:regime_1}.  Then there exists 
$\alpha_0=\alpha_0(X)$ such that the following holds.  For each $i\in\mathcal I$, let $\widehat Y_i\to Y_i$ be 
a finite regular cover such that $\langle X\mid\{\widehat Y_i\}_{i\in\mathcal 
I}\rangle$ is a $C''(\alpha)$ presentation for some $\alpha\in[0,\alpha_0]$ and let $G=\pi_1(\langle X\mid\{\widehat 
Y_i\}_{i\in\mathcal 
I}\rangle)$.  Then $G$ is finite, two-ended, or acylindrically hyperbolic.    

\item Suppose that $\langle X\mid\{Y_i\}_{i\in\mathcal I}\rangle$ satisfies~\eqref{item:regime_2}.  Then there exists 
$\alpha_1=\alpha_1(X,\{Y_i\}_{i\in\mathcal I})$ such that the following holds.  For each $i\in\mathcal I$, let $\widehat Y_i\to 
Y_i$ be 
a finite regular cover such that $\langle X\mid\{\widehat Y_i\}_{i\in\mathcal 
I}\rangle$ is a $C''(\alpha)$ presentation for some $\alpha\in[0,\alpha_1]$ and let $G=\pi_1(\langle X\mid\{\widehat 
Y_i\}_{i\in\mathcal 
I}\rangle)$.  Then $G$ is finite, two-ended, or acylindrically hyperbolic. 
\end{enumerate}

\end{prop}

\begin{rem}
The distinction between~\eqref{item:regime_1} and~\eqref{item:regime_2} is subtle.  The first says that if our initial 
(minimal) cubical presentation already satisfies a cubical small-cancellation condition depending only on $X$, then 
replacing each $Y_i$ by a suitable finite cover (of degree at least a constant independent of $X$, say $144$), we achieve 
acylindrical hyperbolicity in the quotient.  Condition~\eqref{item:regime_2} only requires our initial cubical presentation to satisfy some uniform small-cancellation condition (not depending on $X$); 
the price is that to 
achieve acylindrical hyperbolicity, we have to pass to covers $\widehat Y_i\to Y_i$ of degree at least a constant depending 
on our initial cubical presentation.
\end{rem}

\begin{proof}[Proof of Proposition~\ref{prop:essential_acylindrical}]
 We can assume that each $Y_i$ is non-contractible, by removing contractible relators, without affecting anything.  Note 
that the graph 
$\widetilde{\hyp}$ depends only on $X$, and $\widehat{\hyp}$ depends on $X$ and the $Y_i$ but not on the further finite
covers $\widehat Y_i$.  In fact, the collection of subgroups $\stabilizer_{\pi_1X}(\widetilde Y_i)$ depends 
only on the initial data $\{Y_i\to X\}_{i\in\mathcal I}$.  

We can also assume $\widetilde X$ is not a product, so $\pi_1X$ has elements acting loxodromically 
on $\widetilde{\hyp}$.  

Assume that~\eqref{item:regime_1} [resp. ~\eqref{item:regime_2}] holds and let $\{\widehat Y_i\to 
Y_i\}_{i\in\mathcal I}$ be as in the statement, where $\alpha_0$ [resp. $\alpha_1$] comes from 
condition~\eqref{item:r1_cover} [resp.~\eqref{item:r2_cover}].  Suppose that there exists $i$ such that for 
some elevation $\widetilde Y_i\subseteq\widetilde X$, there is a subcomplex $\widetilde B$ such that 
$\gate_{\widetilde Y_i}(\widetilde B)$ has diameter at least $1$ and either $\widetilde B=\widetilde 
Y_j\neq\widetilde Y_i$ or $\widetilde B$ is the carrier of a hyperplane not crossing $\widetilde Y_i$.  Then 
Lemma~\ref{lem:loxodromic_on_coned_contact_graph} implies the proposition in this case.

Otherwise, whenever $\widetilde Y_i,\widetilde Y_j$ are distinct, no hyperplane crosses both, and whenever $H$ 
is a hyperplane disjoint from $\widetilde Y_i$, no hyperplane crosses both $H$ and $\widetilde Y_i$.  

If $H$ crosses $\widetilde Y_i$, and $v\in \mathcal N(H)$ is a vertex not in $\widetilde Y_i$, then some hyperplane $V$ separates $v$ 
from $\widetilde Y_i$.  So $\gate_{\widetilde Y_i}(\mathcal N(V))$ is nontrivial (since $H$ crosses $\widetilde Y_i$ and $V$), 
leading to a contradiction.  Hence $H\subseteq \widetilde Y_i$.  Thus, $\widetilde X$ 
decomposes as a tree of spaces with each edge space a $0$--cube; each 
elevation of each relator is a vertex space.  

Hence, $X$ is a finite graph of spaces 
with trivial edge spaces and a vertex space $Z$ which is a compact nonspositively-curved cube complex having 
trivial intersection with the image of each $Y_i$; the various images of the local isometries $\widehat Y_i\to Y_i \to X$ 
are 
also vertex spaces.  Thus, either $G$ splits over the trivial group, or $G=\pi_1Z$.  In the former case, we are 
done. In the latter case, the result follows because $G$ is the fundamental group of a nonpositively curved 
cube complex.
\end{proof}

We are now ready to prove Theorem~\ref{thmi:main}.

\begin{proof}[Proof of Theorem~\ref{thmi:main}]
Let $\langle X\mid\{Y_i\}_{i\in\mathcal I}\rangle$ be as in the statement.

By~\cite[Proposition 3.5]{CapraceSageev:rank_rigidity}, there is a convex $\pi_1X$--invariant subcomplex $\widetilde 
Z\subseteq\widetilde X$ on which $\pi_1X$ acts cocompactly and essentially.  (Although it is not made explicit 
in~\cite{CapraceSageev:rank_rigidity}, $\widetilde Z$ is in general only a subcomplex of the first cubical subdivision of 
$\widetilde X$, because the action may have inversions across hyperplanes.  But, we can pass to the cubical 
subdivision without affecting the argument, and thus assume that $\widetilde Z$ is a 
subcomplex.)

Let $Z=\pi_1X\backslash \widetilde Z$, which is a compact nonpositively curved cube complex.  Each $\tilde 
g\in\pi_1Z=\pi_1X$ has the same translation length on $\widetilde Z$ as on $\widetilde X$, since $\widetilde Z$ is a 
$\pi_1X$--equivariant 1--lipschitz retract of $\widetilde X$ (use the gate map to $\widetilde Z$).  The inclusion 
$\widetilde Z\to\widetilde X$ descends to a local isometry $Z\to X$ (inducing an isomorphism on fundamental group).  

Form a collection $\{U_j\to Z\}_j$ of local isometries as follows.  For each $Y_i$ and each lift $\widetilde 
Y_i\subseteq\widetilde X$, let $\widetilde U_i=\gate_Z(\widetilde Y_i)$, which is a convex subcomplex.  Since $\pi_1X$ 
stabilises $Z$, we have $\stabilizer_{\pi_1X}(\widetilde Y_i)\subseteq\stabilizer_{\pi_1X}(\widetilde U_i)$.  The reverse 
inclusion also holds.  Indeed, suppose $\tilde g\widetilde U_i=\widetilde U_i$. We can assume that $\widetilde U_i$ is 
unbounded since we can assume $\pi_1Y_i$ is infinite (otherwise we would have discarded the simply connected 
relator $Y_i$).  So the small-cancellation conditions imply $\tilde g\widetilde Y_i=\widetilde Y_i$.  Hence $\widetilde Y_i$ 
and $\widetilde U_i$ have the same stabiliser.  Letting $U_i=\pi_1Y_i\backslash\widetilde U_i$, the inclusion $\widetilde 
U_i\to \widetilde Z$ induces a local isometry $U_i\to Z$.  The cubical presentation $\langle Z\mid\{U_i\}_{i}\rangle$ presents 
the same group as $\langle X\mid\{Y_i\}_{i\in\mathcal I}\rangle$.  (For each $Y_i$, there is a nonempty family of $U_j\to Z$ 
representing subgroups conjugate to $\pi_1Y_i$.)

Any wall-piece or cone-piece in $\widetilde U_i$ is the image under $\gate_Z$ of a piece in some $\widetilde Y_i$, so 
lengths of pieces have not increased, while systoles have not decreased.  So any small-cancellation condition satisfied by 
the original cubical presentation persists.  The theorem now follows from 
Proposition~\ref{prop:essential_acylindrical}.  %
%
%
\end{proof}

\bibliographystyle{alpha}
\bibliography{strebel}

\begin{thebibliography}{{G}ro03}

\bibitem[AD12]{AD}
Goulnara {Arzhantseva} and Cornelia {Dru\c{t}u}.
\newblock Geometry of infinitely presented small cancellation groups, {R}apid
  {D}ecay and quasi-homomorphisms.
\newblock {\em {\rm arXiv:1212.5280}}, 2012.

\bibitem[AMS16]{AMS}
Yago Antol\'{i}n, Ashot Minasyan, and Alessandro Sisto.
\newblock Commensurating endomorphisms of acylindrically hyperbolic groups and
  applications.
\newblock {\em Groups Geom. Dyn.}, 10(4):1149--1210, 2016.

\bibitem[AO14]{ArzhantsevaOsajda}
Goulnara Arzhantseva and Damian Osajda.
\newblock Graphical small cancellation groups with the {H}aagerup property.
\newblock {\em {\rm arXiv:1404.6807}}, 2014.

\bibitem[AS83]{AppelSchupp}
Kenneth~I. Appel and Paul~E. Schupp.
\newblock Artin groups and infinite {C}oxeter groups.
\newblock {\em Invent. Math.}, 72(2):201--220, 1983.

\bibitem[Bal17]{Balasubramanya}
Sahana Balasubramanya.
\newblock Acylindrical group actions on quasi-trees.
\newblock {\em Algebr. Geom. Topol.}, 17(4):2145--2176, 2017.

\bibitem[BB96]{BallmannBuyalo}
Werner Ballmann and Serguei Buyalo.
\newblock Nonpositively curved metrics on 2-polyhedra.
\newblock {\em Mathematische Zeitschrift}, 222(1):97--134, 1996.

\bibitem[BF02]{BestvinaFujiwara}
Mladen Bestvina and Koji Fujiwara.
\newblock Bounded cohomology of subgroups of mapping class groups.
\newblock {\em Geometry \& Topology}, 6(1):69--89, 2002.

\bibitem[BF10]{BestvinaFeighn}
Mladen Bestvina and Mark Feighn.
\newblock A hyperbolic {O}ut({F}$_n)$-complex.
\newblock {\em Groups Geom. Dyn.}, 4(1):31--58, 2010.

\bibitem[BHS17]{BHS:HHS_I}
Jason Behrstock, Mark~F. Hagen, and Alessandro Sisto.
\newblock Hierarchically hyperbolic spaces {I}: {C}urve complexes for cubical
  groups.
\newblock {\em Geom. Topol.}, 21(3):1731--1804, 2017.

\bibitem[Bow08]{Bowditch:tight}
Brian~H. Bowditch.
\newblock Tight geodesics in the curve complex.
\newblock {\em Inventiones mathematicae}, 171(2):281--300, 2008.

\bibitem[Bow14]{Bowditch:uniform}
Brian~H. Bowditch.
\newblock Uniform hyperbolicity of the curve graphs.
\newblock {\em Pacific J. Math}, 269(2):269--280, 2014.

\bibitem[Bri48]{Brin}
I.A. Brin.
\newblock Gauss-{B}onnet theorems for polyhedra.
\newblock {\em Uspekhi Mat. Nauk}, 3:226--227, 1948.

\bibitem[Che00]{Chepoi:median}
Victor Chepoi.
\newblock Graphs of some {CAT}(0) complexes.
\newblock {\em Advances in Applied Mathematics}, 24(2):125--179, 2000.

\bibitem[CS11]{CapraceSageev:rank_rigidity}
Pierre-Emmanuel Caprace and Michah Sageev.
\newblock Rank rigidity for {CAT}(0) cube complexes.
\newblock {\em Geom. Funct. Anal.}, 21:851--891, 2011.

\bibitem[DGO17]{DGO}
Fran{\c{c}}ois Dahmani, Vincent Guirardel, and Denis Osin.
\newblock Hyperbolically embedded subgroups and rotating families in groups
  acting on hyperbolic spaces.
\newblock {\em Mem. Amer. Math. Soc.}, 245(1156):v+152, 2017.

\bibitem[FPS15]{FPS}
Roberto Frigerio, Beatrice~M. Pozzetti, and Alessandro Sisto.
\newblock Extending higher-dimensional quasi-cocycles.
\newblock {\em Journal of Topology}, 8(4):1123--1155, 2015.

\bibitem[Ger87]{Gersten}
Steve~M. Gersten.
\newblock {\em Reducible diagrams and equations over groups}.
\newblock Springer, 1987.

\bibitem[{G}ro03]{Gromov:random}
Mikhail {G}romov.
\newblock Random walk in random groups.
\newblock {\em Geom. Funct. Anal.}, 13(1):73--146, 2003.

\bibitem[Gru15]{Gruber:thesis}
Dominik Gruber.
\newblock {\em Infinitely presented graphical small-cancellation groups}.
\newblock PhD thesis, Universit\"{a}t Wien, 2015.

\bibitem[GS14]{GruberSisto:graphical}
Dominik Gruber and Alessandro Sisto.
\newblock Infinitely presented graphical small cancellation groups are
  acylindrically hyperbolic.
\newblock {\em {\rm arXiv:1408.4488v2}}, 2014.

\bibitem[GS18]{GruberSisto:graphicalJ}
Dominik Gruber and Alessandro Sisto.
\newblock Infinitely presented graphical small cancellation groups are
  acylindrically hyperbolic.
\newblock {\em Ann. Inst. Fourier (Grenoble)}, 68(6):2501--2552, 2018.

\bibitem[Hag07]{Haglund:semisimple}
Fr{\'e}d{\'e}ric Haglund.
\newblock Isometries of {CAT}(0) cube complexes are semi-simple.
\newblock {\em {\rm arXiv:0705.3386}}, 2007.

\bibitem[Hag13]{Hagen:boundary}
Mark~F. Hagen.
\newblock The simplicial boundary of a {CAT}(0) cube complex.
\newblock {\em Alg. Geom. Topol.}, 13:1299--1367, 2013.

\bibitem[Hag14]{Hagen:quasi_arb}
Mark~F. Hagen.
\newblock Weak hyperbolicity of cube complexes and quasi-arboreal groups.
\newblock {\em J. Topol.}, 7(2):385--418, 2014.

\bibitem[Ham07]{Hamenstadt:gg}
Ursula Hamenstadt.
\newblock Geometry of the complex of curves and of {T}eichm\"{u}ller space.
\newblock {\em Handbook of {T}eichm\"{u}ller theory}, 1:447--467, 2007.

\bibitem[Ham08]{Hamenstadt:cohom}
Ursula Hamenst{\"a}dt.
\newblock Bounded cohomology and isometry groups of hyperbolic spaces.
\newblock {\em Journal of the European Mathematical Society}, 10(2):315--349,
  2008.

\bibitem[HO13]{HullOsin}
Michael Hull and Denis Osin.
\newblock Induced quasicocycles on groups with hyperbolically embedded
  subgroups.
\newblock {\em Algebraic \& Geometric Topology}, 13(5):2635--2665, 2013.

\bibitem[Jan17]{Jankiewicz}
Kasia Jankiewicz.
\newblock The fundamental theorem of cubical small cancellation theory.
\newblock {\em Trans. Amer. Math. Soc.}, 369(6):4311--4346, 2017.

\bibitem[KMT17]{KMT}
Thomas Koberda, Johanna Mangahas, and Samuel~J. Taylor.
\newblock The geometry of purely loxodromic subgroups of right-angled {A}rtin
  groups.
\newblock {\em Trans. Amer. Math. Soc.}, 369(11):8179--8208, 2017.

\bibitem[Mac12]{Mackay:cd}
John~M. Mackay.
\newblock Conformal dimension and random groups.
\newblock {\em Geom. Funct. Anal.}, 22(1):213--239, 2012.

\bibitem[MO15]{MinasyanOsin}
Ashot Minasyan and Denis Osin.
\newblock Acylindrical hyperbolicity of groups acting on trees.
\newblock {\em Mathematische Annalen}, 362(3-4):1055--1105, 2015.

\bibitem[MW02]{McCammondWise}
Jonathan~P. McCammond and Daniel~T. Wise.
\newblock Fans and ladders in small cancellation theory.
\newblock {\em Proceedings of the London Mathematical Society}, 84(3):599--644,
  2002.

\bibitem[NR98]{NibloRoller}
Graham Niblo and Martin Roller.
\newblock Groups acting on cubes and {K}azhdan's property ({T}).
\newblock {\em Proceedings of the American Mathematical Society},
  126(3):693--699, 1998.

\bibitem[Oll06]{Ollivier}
Yann Ollivier.
\newblock On a small cancellation theorem of {G}romov.
\newblock {\em Bulletin of the Belgian Mathematical Society Simon Stevin},
  13(1):75--90, 2006.

\bibitem[Osi15]{Osin:l2}
Denis Osin.
\newblock On acylindrical hyperbolicity of groups with positive first
  {$\ell^2$}-{B}etti number.
\newblock {\em Bull. Lond. Math. Soc.}, 47(5):725--730, 2015.

\bibitem[Osi16]{Osin:acyl}
Denis Osin.
\newblock Acylindrically hyperbolic groups.
\newblock {\em Trans. Amer. Math. Soc.}, 368(2):851--888, 2016.

\bibitem[OW07]{OllivierWise}
Yann Ollivier and Daniel Wise.
\newblock Kazhdan groups with infinite outer automorphism group.
\newblock {\em Transactions of the American Mathematical Society},
  359(5):1959--1976, 2007.

\bibitem[PS17]{PrzytyckiSisto}
Piotr Przytycki and Alessandro Sisto.
\newblock A note on acylindrical hyperbolicity of mapping class groups.
\newblock In {\em Hyperbolic geometry and geometric group theory}, volume~73 of
  {\em Adv. Stud. Pure Math.}, pages 255--264. Math. Soc. Japan, Tokyo, 2017.

\bibitem[PW18]{PrzytyckiWise:mixed}
Piotr Przytycki and Daniel~T. Wise.
\newblock Mixed 3-manifolds are virtually special.
\newblock {\em J. Amer. Math. Soc.}, 31(2):319--347, 2018.

\bibitem[RS87]{RipsSegev}
Eliyahu Rips and Yoav Segev.
\newblock Torsion-free group without unique product property.
\newblock {\em Journal of Algebra}, 108(1):116--126, 1987.

\bibitem[Sam02]{Sambusetti:surf}
Andrea Sambusetti.
\newblock Growth tightness of surface groups.
\newblock {\em Expo. Math.}, 20(4):345--363, 2002.

\bibitem[Sis14]{Sisto:qc}
Alessandro Sisto.
\newblock Quasi-convexity of hyperbolically embedded subgroups.
\newblock {\em Mathematische Zeitschrift}, pages 1--10, 2014.

\bibitem[Sis18]{Sisto:contracting}
Alessandro Sisto.
\newblock Contracting elements and random walks.
\newblock {\em J. Reine Angew. Math.}, 742:79--114, 2018.

\bibitem[Str90]{Strebel}
Ralph Strebel.
\newblock Appendix. {S}mall cancellation groups.
\newblock In {\em Sur les groupes hyperboliques d'apr\`es Mikhael {G}romov
  (Bern, 1988)}, volume~83 of {\em Progr. Math.}, pages 227--273. Birkh\"auser
  Boston, Boston, MA, 1990.

\bibitem[Wis04]{Wise:small_can_cube}
Daniel~T Wise.
\newblock Cubulating small cancellation groups.
\newblock {\em Geometric \& Functional Analysis}, 14(1):150--214, 2004.

\bibitem[Wis12]{Wise:CBMS}
Daniel~T. Wise.
\newblock {\em From riches to raags: 3-manifolds, right-angled {A}rtin groups,
  and cubical geometry}, volume 117 of {\em CBMS Regional Conference Series in
  Mathematics}.
\newblock Published for the Conference Board of the Mathematical Sciences,
  Washington, DC; by the American Mathematical Society, Providence, RI, 2012.

\bibitem[Wis21]{Wise:QCH}
Daniel~T. Wise.
\newblock The structure of groups with a quasiconvex hierarchy.
\newblock {\em Annals of Mathematics Studies}, 2021.

\end{thebibliography}
\end{document}